\numberwithin{equation}{section}
\crefname{equation}{}{}
\crefname{assumption}{Assumption}{Assumptions}
\newtheorem{theorem}{Theorem}[section]
\newtheorem{lemma}[theorem]{Lemma}
\newtheorem{assumption}{Assumption}
\newtheorem{proposition}[theorem]{Proposition}
\theoremstyle{definition}
\newtheorem{definition}[theorem]{Definition}
\theoremstyle{remark}
\newtheorem{remark}[theorem]{Remark}
\newcommand{\cC}{{\mathcal C}}
\newcommand{\cD}{{\mathcal D}}
\newcommand{\cF}{{\mathcal F}}
\newcommand{\cG}{{\mathcal G}}
\newcommand{\cH}{{\mathcal H}}
\newcommand{\cL}{{\mathcal L}}
\newcommand{\cO}{{\mathcal O}}
\newcommand{\cP}{{\mathcal P}}
\newcommand{\cT}{{\mathcal T}}
\newcommand{\cV}{{\mathcal V}}
\newcommand{\cW}{{\mathcal W}}
\newcommand{\cX}{{\mathcal X}}
\newcommand{\cY}{{\mathcal Y}}
\newcommand{\bbC}{{\mathbb C}}
\newcommand{\bbN}{{\mathbb N}}
\newcommand{\bbP}{{\mathbb P}}
\newcommand{\bbR}{{\mathbb R}}
\newcommand{\rd}{{\, \rm{d}}}
\newcommand{\eps}{\varepsilon}
\newcommand{\ind}{\mathbbm{1}}
\DeclareMathOperator{\ad}{ad}
\DeclareMathOperator{\Real}{Re}
\DeclarePairedDelimiter{\abs}{\lvert}{\rvert}
\DeclarePairedDelimiter{\norm}{\lVert}{\rVert}
\DeclarePairedDelimiter{\cur}{\{}{\}}
\DeclarePairedDelimiter{\bra}{(}{)}
\DeclarePairedDelimiter{\sqr}{[}{]}
\newcommand{\PP}[2][]{\mathbb{P}\, \sqr[#1]{#2}}
\newcommand{\EE}[2][\big]{\mathbb{E} \sqr[#1]{#2}}
\newcommand{\grp}{{\mathrm{G}}}
\newcommand{\alg}{{\mathfrak{g}}}
\newcommand{\eye}{{\mathrm I}}
\title{Noncommutative orbital stability of stochastic patterns in Banach spaces}
\author{Joris van Winden}
\date{June 24, 2024}
\address{Delft Institute of Applied Mathematics, Faculty of Electrical Engineering, Mathematics and Computer Science, Delft University of Technology, Mekelweg 4, 2628 CD Delft, Netherlands}
\email{J.vanWinden@tudelft.nl}
\keywords{Stochastic partial differential equations, orbital stability, noncommutative, phase tracking}
\subjclass[2020]{
    37H30, 
    35B06, 
    60H15. 
}
\thanks{The author thanks Manuel Gnann for careful reading of the manuscript. Part of this work is based on the MSc thesis of the author, prepared under supervision of Manuel Gnann at Delft University of Technology. The author is supported by a DIAM fast-track scholarship}
\begin{document}

\begin{abstract}
We consider stochastic perturbations of PDEs which have special pattern solutions, such as (nonlinear) travelling waves, solitons, and spiral waves.
We show orbital stability of these patterns on a timescale which is exponential in the inverse square of the noise amplitude.
We systematically treat equations with noncommutative symmetry groups, and show how the noncommutativity affects the motion of the pattern.
This is done by introducing a new method to track the (generalized) phase of the pattern.
Furthermore, we demonstrate how orbital stability arises from a mismatch of symmetry between the pattern and the equation.
Our phase tracking method does not rely on a Hilbert space structure.
This allows us to show stability in general Banach spaces, and to treat noise with lower regularity than before.
\end{abstract}

\maketitle

\section{Introduction}
Recently, a significant interest has developed in studying the stability of patterns in stochastic partial differential equations 
(SPDEs) \cite{inglis_general_2016,maclaurin_phase_2023,kuehn_stochastic_2022,liu_stability_2023,hamster_stability_2019,hamster_stability_2020,hamster_stability_2020a,hamster_travelling_2020,lang_multiscale_2016,gnann_solitary_2024,kruger_front_2014,kruger_multiscaleanalysis_2017,adams_isochronal_2022,cartwright_collective_2019,cartwright_collective_2021,vandenbosch_multidimensional_2024}.
Commonly studied patterns include travelling waves, travelling pulses, spiral waves, solitary waves, and solitons.
A typical feature is that these patterns exhibit \emph{orbital stability}, meaning that the solution to the SPDE remains close to a suitably shifted version of the pattern.
The exact nature of this shift, which we will henceforth refer to as the \emph{phase} or \emph{phase shift}, depends on the geometry of the equation.
To show orbital stability, it is often necessary to have a method to continuously track the phase of the pattern, for which various established methods are available \cite{kruger_front_2014,hamster_stability_2019,inglis_general_2016,cartwright_collective_2019}.
However, these methods typically rely on orthogonality conditions, which are only available when working in a Hilbert space.
In this paper, we make the following contribution to this field of research:
\begin{itemize}
    \item We explain how orbital stability arises through symmetry, and show what kind of phase shift is expected for a given pattern.
    \item We introduce a new method of tracking the phase, which does not rely on Hilbert space geometry.
    \item We give explicit expressions to compute the phase, which are valid for patterns with noncommutative symmetry groups.
\end{itemize}
The main results (\cref{thm:stochstabshort,thm:stochstablong}) show orbital stability of stochastically forced patterns in Banach spaces, on a timescale which is exponential in the inverse square of the noise amplitude.
We also directly relate the orbital stability to the symmetry group of the equation.
The main novelties of this work are that we treat a general noncommutative setting, and show stability without assuming an underlying Hilbert space structure.
The advantage of working in a Banach space setting is that we can allow for rougher noise, as is demonstrated \cref{subsec:fhn}.

\subsection{Orbital stability and symmetry}
The prototypical example of an orbitally stable pattern is that of a travelling (nonlinear) wave or pulse.
The literature on these waves is vast, and it is not feasible to give a comprehensive overview here.
A (nonexhaustive) list of settings in which these waves have been studied consists of hydrodynamics \cite{korteweg_change_1895}, neural field equations \cite{conley_application_1984}, fiber optics equations \cite{marcq_exact_1994,maimistov_solitons_2010}, and predator-prey models \cite{gardner_existence_1983}.
For more comprehensive treatments of this topic, we refer the reader to \cite{sandstede_stability_2002,kapitula_spectral_2013,volpert_traveling_1994a,kuehn_travelling_2020}.

The mathematical treatment of these waves can be subdivided into three somewhat separate aspects: existence, linear stability, and nonlinear stabiliy.
A prime example of each of these aspects is found in the seminal works of Evans on nonlinear waves and pulses in neural field equations \cite{evans_nerve_1972,evans_nerve_1972a,evans_nerve_1972b,evans_nerve_1975}.
The treatment of stochastically perturbed travelling waves is much more recent (see e.g.\ \cite{kruger_front_2014,hamster_stability_2019,inglis_general_2016}).
However, the earliest work treating orbital stability in an SPDE that we are aware of is \cite{katzenberger_solutions_1991}.
As our primary goal is to treat nonlinear stochastic stability, we take existence and linear stability of a pattern for granted (see \cref{ass:selfsimilarsolution} in \cref{sec:linsym}).

In the case of a travelling wave, it may seem obvious that a translational correction is the `right' way to shift the pattern.
However, in higher dimensions, the situation is not as clear.
A primary motivating example in this situation is a two-dimensional spiral wave, which requires a phase shift consisting of translations and rotations \cite{beyn_nonlinear_2008,kuehn_stochastic_2022}.
At first glance, one may wonder why a rotational correction does not suffice to show stability.
In fact, even in the case of a travelling wave, it is not immediately obvious why a phase correction is necessary in the first place.
In \cref{sec:linsym}, we answer these questions by stating the following principle: 

\begin{quote}
\emph{Orbital stability arises from continuous symmetries of the equation which are not shared by the pattern.}
\end{quote}

This immediately clarifies the origin and nature of the phase correction for the spiral wave and the travelling waves.
Furthermore, it provides a guide to determine how a pattern with a more complicated symmetry group is expected to move.

Systematic treatments of patterns with bigger symmetry groups have been given before (see \cite[Chapter 4.2]{kapitula_spectral_2013}, \cite{maclaurin_phase_2023}).
However, both works contain the explicit or implicit assumption that the symmetry group is commutative, which is a significant limitation.
Note that the recent work \cite{adams_isochronal_2022}, which does not seem to include commutativity assumptions, defers the stability proof to \cite{maclaurin_phase_2023}, in which commutativity is implicitly assumed.
The commutativity already poses a problem when treating the two-dimensional spiral wave, as the relevant symmetry group of $\bbR^2$ (consisting of rotations and translations) is noncommutative.
It can be seen in \cite{beyn_nonlinear_2008,kuehn_stochastic_2022} that this noncommutativity plays a significant role in the analysis.
In the noncommutative setting, there is the work of \citeauthor{beyn_freezing_2004} \cite{beyn_freezing_2004} dealing with PDEs with continuous symmetries.
Although the setting is similar to ours, \cite{beyn_freezing_2004} does not address the matter of orbital stability of patterns.
Moreover, there are serious analytical challenges when one tries to adapt the `freezing method' to a stochastic setting, as can be seen in e.g.\ \cite{vandenbosch_multidimensional_2024}.

Using the basic theory of Lie groups and Lie algebras, we give a systematic treatment of (stochastic) nonlinear stability which is valid in the noncommutative case.
We show that the linearized dynamics around the pattern can be explicitly described in terms of the Lie algebra corresponding to the symmetry group of the equation (see \cref{subsec:dynamicscomoving}).
Moreover, from our method of phase tracking, it can directly be seen how noncommutativity affects the motion of the pattern (see \cref{subsec:orbstab}).

\subsection{Phase tracking}
\label{subsec:introphasetrack}
When showing orbital stability of stochastically perturbed patterns, a crucial aspect is the issue of how to track the phase.
In the recent literature, several different ways of accomplishing this have been formulated.
We identify the following methods of phase tracking:
\begin{itemize}
    \item The variational phase \cite{inglis_general_2016,maclaurin_phase_2023,kuehn_stochastic_2022,liu_stability_2023},
    \item The stochastic freezing phase \cite{hamster_stability_2019,hamster_stability_2020,hamster_stability_2020a,hamster_travelling_2020},
    \item The phase-lag method \cite{lang_multiscale_2016,gnann_solitary_2024,kruger_front_2014,kruger_multiscaleanalysis_2017},
    \item The isochronal phase \cite{adams_isochronal_2022,adams_asymptotic_2023}.
\end{itemize}
Note that any two `valid' notions of phase must be closely related, as the pattern can only be in one location at a time.

Despite the conceptual variety of these phase tracking methods, the cited works all rely in some way or another on a Hilbert space structure, which significantly limits the applicability and poses restrictions on the noise.
The variational phase and stochastic freezing phase are both defined in terms of orthogonality conditions, so they are not well-defined outside of a Hilbert space.
The phase-lag method and isochronal phase seem more suitable, with definitions which (partially) generalize to Banach spaces.
However, the associated stability proofs still rely in a nontrivial way on the presence of an inner product.

In \cref{sec:nonlinstab}, we introduce a new method of phase tracking, which we call the \emph{predicted phase} \cref{eq:predictedphase}.
It is defined using a decomposition of the initial profile, and can be viewed as a first-order approximation to the isochronal phase.
In the case of a travelling pulse solution of the form $u(t,x) = u^*(x-t)$, where $u^*$ is the pulse profile, it can be described as follows.
Consider an initial condition of the form $u(0,\cdot) = u^* + v_0$, where $v_0$ is a perturbation of the profile which is $\cO(\eps)$.
If the pulse is linearly stable (see \cref{ass:decomposition}), we can find a decomposition 
\begin{equation}
    \label{eq:introv0decomp}
    v_0 = a \partial_x u^* + w_0,
\end{equation}
where $a,w_0$ are both $\cO(\eps)$, and $w_0$ decays exponentially under the linear dynamics.
For more details on how to compute this decomposition, we refer ahead to \cref{subsec:stabilitycomoving}.
By \cref{eq:introv0decomp} and a Taylor expansion, we have
\begin{align*}
    u(0,\cdot) &= u^* + a \partial_x u^* + w_0 \\
               &= u^*(\cdot + a) + w_0 + \cO(\eps^2).
\end{align*}
Using the exponential decay of $w_0$ under the linear dynamics, and treating the nonlinear and stochastic terms perturbatively, we obtain for $t\geq0$ the expansion
\begin{equation*}
    u(t,\cdot) = u^*(\cdot + a) + \cO(\eps e^{-t}) + \cO(\eps^2).
\end{equation*}
For a fixed, large enough $T$, and sufficiently small $\eps$, we thus find that the difference $u(T,\cdot) - u^*(\cdot - a)$ is smaller (by a constant factor) than the initial difference $u(0,\cdot) - u^*$, when measured in suitable norms.
We then repeat this procedure on the time intervals $[T,2T]$, $[2T,3T]$ and so on to obtain stability on long timescales.

Because of the stochastic forcing, there is on each time interval $[nT,(n+1)T]$ a nonzero probability $p$ that the solution strays too far from the stable manifold.
Using the subgaussian tail estimates on stochastic convolutions formulated in \cref{subsec:tail}, we will estimate $p \lesssim \exp(-c \eps^2 \sigma^{-2})$ for some $c > 0$, where $\sigma$ denotes the noise amplitude (see \cref{eq:stochstabshort}).
The probability $p_\eps(t)$ that the solution leaves an $\eps$-neighbourhood of the stable manifold before time $t$ can then be estimated as $p_\eps(t) \lesssim t\exp(-c\eps^2 \sigma^{-2})$.
Thus, for small noise amplitude ($\sigma \ll 1$), the pattern is stable for a long time with high probability.
We refer ahead to \cref{subsec:orbstab} for the definition of the phase for patterns with more complicated (noncommutative) symmetries.

Our method has two advantages compared to the previously mentioned ones.
The first is that the expression for the phase \cref{eq:predictedphase} is explicit and straightforward to compute (for two examples, see \cref{subsec:rotwave,rem:fhnphase}).
The phase is directly determined from the initial condition, and there is no need to couple an SDE to the SPDE to continuously track the phase.
This also bypasses analytical challenges which are present in previous works, and allows for a concise stability proof, as can be seen in \cref{sec:proof}.

Secondly, the predicted phase is defined without any reference to a Hilbert space structure.
In fact, all the linear stability assumptions in \cref{sec:linsym} are formulated in a general Banach space,
as are the (stochastic) nonlinear stability results in \cref{sec:nonlinstab}.
To our knowledge, this is the first stochastic stability result in this setting, and the flexiblity afforded by this approach allows us to treat noise of various levels of regularity in the example in \cref{subsec:fhn}.
The Banach space setting is important for future applications, since $L^p$-theory with $p \neq 2$ has been proven to be effective in showing well-posedness of SPDEs.
For parabolic equations, recent advances in this area have been made using maximal regularity techniques \cite{agresti_nonlinear_2022a,agresti_reactiondiffusion_2023,agresti_nonlinear_2022}.

Our main stability results (\cref{thm:stochstabshort,thm:stochstablong}) match the best current results obtained with other phase tracking methods \cite{hamster_stability_2020a,maclaurin_phase_2023,gnann_solitary_2024}, and are expected to be optimal when stated in this generality.
We pose our assumptions in a way which is mostly agnostic to the analytical properties of the PDE: we do not assume any smoothing or dispersion, and only require the nonlinearity to be locally Lipschitz. 
We allow for additive noise and multiplicative noise in either the It\^o or Stratonovich sense.
As a consequence, our results apply to parabolic equations, dispersive equations (see \cite{westdorp_soliton_2024,gnann_solitary_2024}), and PDE-ODE systems such as the FitzHugh--Nagumo equation (see \cite{eichinger_multiscale_2022}).
Being formulated without specific knowledge of the equation, our results are generally not optimal in terms of regularity when specifying to any of these settings.
For example, we expect that in the parabolic setting, even lower regularity of the noise can be achieved by making use of the smoothing properties of the equation (see \cref{rem:fhnparabolic}).

The local Lipschitz conditions formulated in \cref{ass:Fregular,ass:GHregular} present an obstable to showing stability with exceedingly rough noise (see \cref{subsec:fhn}), but we only formulate these assumptions to be able to deal with a very general class of equations.
We expect that in many concrete situations, one can adapt our method to deal with the perturbative terms in a way which is taylored to the specific equation.
For some examples of such treatments, we refer to \cite{gnann_solitary_2024,eichinger_multiscale_2022,westdorp_soliton_2024}.

\subsection{Outline}
The outline of this paper is as follows.
In \cref{sec:prelim}, notation and preliminaries regarding stochastic integration are stated.
\cref{sec:linsym} discusses linear stability, and motivates and introduces the main assumption for our stability result.
Afterwards, we explain in \cref{sec:nonlinstab} how orbital stability arises from a nontrivial center space (see \cref{def:centerspace}).
We also introduce the predicted phase \cref{eq:predictedphase}, and formulate the main stability results (\cref{thm:detstab,thm:stochstabshort,thm:stochstablong}) for deterministic and stochastic perturbations.
In \cref{sec:examples} we revisit two examples from the literature: a travelling pulse in a fully diffuse FitzHugh--Nagumo equation \cite{alexander_topological_1990,hamster_stability_2020}, and a two-dimensional spiral wave in a reaction-diffusion equation \cite{beyn_nonlinear_2008,kuehn_stochastic_2022}.
For the FitzHugh--Nagumo pulse, we extend the results of \cite{hamster_stability_2020} to a wide range of noises with low regularity.
In the example of the spiral wave, we compute the predicted phase explicitly, and demonstrate how the noncommutativity of the symmetry enters into the phase.
Finally, \cref{sec:proof} contains the proofs of the main stability results.

\section{Preliminaries}
\label{sec:prelim}
\subsection{Notation}
\label{subsec:notation}
We use the convention that $\bbN$ is the set of nonnegative integers (including zero).
Throughout the paper, we let $(\Omega,\cF,\bbP)$ be a complete probability space, equipped with a complete and right-continuous filtration $\cur{\cF_t}_{t \geq 0}$.
When we speak of adaptedness or progressive measurability, it will be with respect to this filtration unless the contrary is explicitly stated.
We write $\mathbb{E}$ for the expectation associated with $\mathbb{P}$.
We write $\norm{\cdot}_\cX$ for the norm of a general real Banach space $\cX$.
The space of bounded linear operators between two Banach spaces $\cX$ and $\cY$ is denoted $\cL(\cX;\cY)$, and in the case $\cX = \cY$ we write $\cL(\cX) \coloneq \cL(\cX;\cX)$.
For an unbounded operator $A$ between $\cX$ and $\cY$, we denote its domain by $\cD(A)$ and the spectrum by $\sigma(A)$.
We use the notation $\cX \hookrightarrow \cY$ to mean that $\cX$ embeds continuously into $\cY$.

When $M$ is a metric space, we write $C(M;\cX)$ (resp.\ $C_{\mathrm{ub}}(M;\cX)$) for the space of continuous (resp.\ bounded uniformly continuous) functions from $M$ to $\cX$.
For a measure space $(S,\cG,\mu)$ and $p \in [1,\infty]$ we write $L^p(S;\cX)$ for the Lebesgue--Bochner space of strongly measurable $\cX$-valued functions which are $p$-integrable (or essentially bounded if $p = \infty$).
From now on, we will simply write measurable instead of strongly measurable.
Note that if $\cX$ is separable, the two notions are equivalent.
We abbreviate $L^p_{\Omega}(\cX) \coloneq L^p(\Omega;\cX)$ and $L^p(0,t;\cX) \coloneq L^p([0,t];\cX)$, where the latter is equipped with the Lebesgue measure.

For $k,d,n \in \bbN$ (with $d,n$ nonzero) and $p \in [1,\infty]$, we denote by $W^{k,p}(\bbR^d;\bbR^n)$ the classical Sobolev space of measurable functions from $\bbR^d$ to $\bbR^n$ which have $k$ weak derivatives which are $p$-integrable (or essentially bounded in the case $p = \infty$).
When $p \in (1,\infty)$, $s \in [0,\infty)$, we write $H^{s,p}(\bbR^d;\bbR^n)$ for the Bessel space defined via the norm $\norm{(\eye - \Delta)^{\frac{s}{2}} f}_{L^p(\bbR^d;\bbR^n)}$, where $(\eye - \Delta)^{\frac{s}{2}}$ is defined via the Fourier symbol $\xi \mapsto (1 + \abs{\xi}^2)^{\frac{s}{2}}$.
In the case $p=2$, we will write $H^{s}(\bbR^d;\bbR^n)$ instead of $H^{s,2}(\bbR^d;\bbR^n)$.

When $\cH,\cH'$, are Hilbert spaces, we write $\cL_2(\cH;\cH')$ for the closed subspace of $\cL(\cH;\cH')$ consisting of Hilbert--Schmidt operators.
The space of $\gamma$-radonifying operators from $\cH$ to $\cX$, denoted $\gamma(\cH;\cX)$, is defined as the closure of the finite rank operators $T \in \cL(\cH;\cX)$ with respect to the norm
\begin{equation*}
    \norm{T}_{\gamma(\cH;\cX)} = \sup \bra[\Big]{\mathbb{\tilde{E}}\sqr[\Big]{\norm[\big]{\sum_{j=1}^{n} \gamma_j T h_j}^2_\cX}}^{\frac{1}{2}},
\end{equation*}
where $(\gamma_j)_{j \geq 1}$ is a sequence of independent standard Gaussian random variables on some probability space $(\widetilde{\Omega},\widetilde{\cF},\widetilde{\mathbb P})$, $\widetilde{\mathbb E}$ denotes the expecation with respect to $\widetilde{\bbP}$,
and the supremum is taken over all sets of orthonormal vectors in $\cH$.

Finally, we write $[\cdot,\cdot] \colon \alg \times \alg \to \alg$ for the Lie bracket of a Lie algebra $\alg$.

\subsection{Stochastic integration and tail estimates}
\label{subsec:tail}
In this section, we let $\cH$ be a separable Hilbert space, and let $W(t)$ be an $\cH$-cylindrical Wiener process.

In \cref{subsec:stochpert} we will make use of the theory of stochastic integration in $2$-smooth Banach spaces.
For an introduction on this topic and further references, we refer the reader to \cite{vanneerven_stochastic_2015a}.
The condition that $\cX$ is $2$-smooth is necessary to have a theory of stochastic integration which is satisfactory for our purposes.
This condition is satisfied by many commonly used spaces, including $L^p$, $W^{k,p}$, and $H^{s,p}$ for $p \in [2,\infty)$.
We note that $2$-smoothness is generally not preserved under isomorphisms, so one must take care to use the `right' norm for these spaces.
However, the celebrated work of Pisier \cite{pisier_martingales_1975} shows that any space which has martingale type $2$ admits an equivalent $2$-smooth norm.

To show stability on long timescales, it will be necessary to have a subgaussian tail estimate for stochastic convolutions.
\cref{prop:stochconvtail} suffices for this purpose.
\begin{lemma}
    \label{lem:tail}
    Let $K > 0$, and let $X$ be a nonnegative random variable which satisfies
    \begin{equation}
        \label{eq:Xmoment}
        \EE[]{X^p} \leq \sqrt{p}^{\,p} K^p, \quad p \in [2,\infty).
    \end{equation}
    Then $X$ satisfies the subgaussian tail estimate
    \begin{equation}
        \label{eq:Xtail}
        \PP{X > \lambda} \leq e \exp\bra[\big]{-(2e)^{-1} \lambda^2 K^{-2}}, \quad \lambda \geq 0.
    \end{equation}
\end{lemma}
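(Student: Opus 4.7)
The plan is to use Markov's inequality on the moment bound and then optimize the choice of $p$, following the standard moment-method route to subgaussian tails. Concretely, for any $p \in [2,\infty)$ and any $\lambda > 0$, Markov gives
\begin{equation*}
    \mathbb{P}[X > \lambda] \;\leq\; \lambda^{-p}\, \mathbb{E}[X^p] \;\leq\; \bra[\Big]{\tfrac{\sqrt{p}\, K}{\lambda}}^{\!p} \;=\; \exp\!\bra[\big]{\tfrac{p}{2} \log p + p \log(K/\lambda)}.
\end{equation*}
One minimises the exponent in $p$ over the admissible range $[2,\infty)$.

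Differentiating $f(p) = \tfrac{p}{2}\log p + p \log(K/\lambda)$ and setting the derivative to zero yields the critical point $p^* = \lambda^2 / (e K^2)$, and a direct substitution shows that at this $p^*$ the bound becomes exactly $\exp(-\lambda^2/(2eK^2))$, matching the right-hand side of \cref{eq:Xtail} up to the prefactor. Hence, on the regime $\lambda^2 \geq 2e K^2$ (which is precisely the condition $p^* \geq 2$ needed for the moment hypothesis to apply), choosing $p = p^*$ gives the desired estimate without the leading $e$.

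On the remaining regime $\lambda^2 < 2eK^2$, the optimal exponent would be $<2$ and so is unavailable, but here I use the trivial bound $\mathbb{P}[X>\lambda] \leq 1$. The inequality $1 \leq e \exp(-\lambda^2/(2eK^2))$ holds iff $\lambda^2/(2eK^2) \leq 1$, which is exactly the range we are in. This is what the factor $e$ in \cref{eq:Xtail} is there to absorb. Combining both regimes gives \cref{eq:Xtail} for all $\lambda \geq 0$.

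There is really no obstacle here; the only subtlety worth spelling out is justifying that one is allowed to set $p = p^*$ only when $p^* \geq 2$ and treating the small-$\lambda$ regime separately to pick up the harmless $e$ prefactor. I would also note in passing that one should handle $\lambda = 0$ as a trivial case (both sides $\leq e$), and that the optimisation step is precisely the standard argument that moment bounds of the form $\mathbb{E}[X^p]^{1/p} \lesssim \sqrt{p}$ are equivalent to subgaussian tails.
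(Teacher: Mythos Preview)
Your proof is correct and is essentially identical to the paper's own proof: the paper also uses Markov's inequality with the optimal exponent $q = \lambda^2/(eK^2)$ when $q \geq 2$, and the trivial bound $\mathbb{P}[X>\lambda] \leq 1 \leq e\exp(-q/2)$ when $q \leq 2$. The only difference is cosmetic: you derive the optimal $p^*$ by differentiation, whereas the paper simply writes down the value directly.
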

\begin{proof}
    Let $\lambda \geq 0$, and set $q = \lambda^2 K^{-2} e^{-1}$.
    If $q \in [0,2]$, then $e \exp(-(2e)^{-1} \lambda^2 K^{-2}) = e \exp(-2^{-1}q) \geq 1$, so \cref{eq:Xtail} is trivial.
    If $q \in [2,\infty)$, then we use Markov's inequality and \cref{eq:Xmoment} to find
    \begin{equation*}
        \PP{X > \lambda} \leq \lambda^{-q}\EE[]{X^q} \leq \lambda^{-q}\sqrt{q}^{\,q} K^q = \exp(-(2e)^{-1} \lambda^2 K^{-2}),
    \end{equation*}
    so \cref{eq:Xtail} is satisfied.
\end{proof}

\begin{proposition}
\label{prop:stochconvtail}
Let $\cX$ be a $2$-smooth Banach space, and let $\cur{S(t,t')}_{0 \leq t \leq t'}$ be a strongly continuous evolution family on $\cX$.
There exists a constant $c > 0$ such that the estimate
\begin{equation}
    \label{eq:stochconvtail}
    \PP[\Big]{\sup_{t \in [0,T]} \norm[\big]{\int_0^t S(t,t')f(t') \rd W(t')}_\cX \geq \lambda} 
    \leq e \exp\bra[\Big]{\frac{-c\lambda^2}{T\,\norm{f}_{L^{\infty}_{\Omega}(L^{\infty}(0,T;\gamma(\cH;\cX)))}^2}}
\end{equation}
holds for all $T > 0$, $\lambda \geq 0$, and all progressively measurable $f \in L^{\infty}_{\Omega}(L^{\infty}(0,T;\gamma(\cH;\cX)))$.
\end{proposition}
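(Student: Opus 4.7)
\emph{Proof plan.} The strategy is to reduce the claim to an $L^p$ moment bound and then invoke \cref{lem:tail}. Setting
\begin{equation*}
    X \coloneq \sup_{t \in [0,T]} \norm[\big]{\int_0^t S(t,t') f(t') \rd W(t')}_\cX,
\end{equation*}
it is enough to establish an estimate of the form $\EE[]{X^p}^{1/p} \leq K \sqrt{p}$ for every $p \in [2,\infty)$, with $K = C \sqrt{T}\, \norm{f}_{L^{\infty}_{\Omega}(L^{\infty}(0,T;\gamma(\cH;\cX)))}$ and $C > 0$ independent of $p$, $T$, and $f$. Given this, \cref{lem:tail} applied with this $K$ immediately yields \cref{eq:stochconvtail}, with $c = (2eC^2)^{-1}$.

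The first step toward the moment bound is to exploit strong continuity of the evolution family. On the compact triangle $\{(t,t') : 0 \leq t' \leq t \leq T\}$ the map $(t,t') \mapsto S(t,t')x$ is continuous for every $x \in \cX$, and hence bounded; by the uniform boundedness principle there exists a finite $M_T$ with $\norm{S(t,t')}_{\cL(\cX)} \leq M_T$ throughout this triangle. The second and main step is to invoke a Burkholder--Davis--Gundy inequality for stochastic convolutions in a $2$-smooth Banach space. The $2$-smoothness theory developed in \cite{vanneerven_stochastic_2015a} yields a bound of the form
\begin{equation*}
    \EE[]{X^p}^{1/p} \leq D M_T \sqrt{p} \, \EE[\big]{\bra[\big]{\int_0^T \norm{f(t')}^2_{\gamma(\cH;\cX)} \rd t'}^{p/2}}^{1/p},
\end{equation*}
where $D$ depends only on the $2$-smoothness constant of $\cX$, and where the factor $M_T$ absorbs the action of the evolution family on the $\gamma(\cH;\cX)$-norm. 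Using the trivial bound $\norm{f(t')}_{\gamma(\cH;\cX)} \leq \norm{f}_{L^{\infty}_{\Omega}(L^{\infty}(0,T;\gamma(\cH;\cX)))}$ almost surely and integrating over $t' \in [0,T]$ then delivers the required moment estimate with $K = D M_T \sqrt{T} \, \norm{f}_{L^{\infty}_{\Omega}(L^{\infty}(0,T;\gamma(\cH;\cX)))}$.

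The main obstacle is justifying the BDG-type inequality with the sharp $\sqrt{p}$ scaling for stochastic convolutions against a general (possibly non-semigroup) evolution family. For analytic semigroups, the factorization method of Da Prato--Kwapień--Zabczyk provides the standard route, and the $2$-smooth Banach space theory covers a substantial class of evolution families. If no off-the-shelf statement applies directly, one can proceed via a chaining/dyadic argument, applying the pointwise-in-$t$ BDG inequality to the genuine $\cX$-valued martingale $s \mapsto \int_0^s S(t, t') f(t') \rd W(t')$ at each point of a countable dense set of times $t$, and using path continuity of the convolution to pass to the supremum---carefully tracking constants so as not to degrade the $\sqrt{p}$ dependence, which is precisely what the subgaussian conclusion in \cref{lem:tail} rests on.
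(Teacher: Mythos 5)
Your overall strategy is exactly the one the paper uses: reduce \cref{eq:stochconvtail} to a moment bound $\EE[]{X^p}^{1/p} \lesssim \sqrt{p}\,\sqrt{T}\,\norm{f}_{L^\infty_\Omega(L^\infty(0,T;\gamma(\cH;\cX)))}$ for all $p \geq 2$, and then invoke \cref{lem:tail}. That reduction is correctly carried out.

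The gap is precisely where you flag it. The paper resolves the key step by citing a specific recent maximal inequality for stochastic convolutions against $C_0$-evolution families in $2$-smooth Banach spaces \cite[Theorem 4.5]{vanneerven_maximal_2020}, which delivers
\begin{equation*}
    \norm[\Big]{\sup_{t \in [0,T]} \norm[\big]{\int_0^t S(t,t')f(t') \rd W(t')}_\cX}_{L^p_\Omega}
    \leq C\sqrt{p}\,T^{\frac{1}{2}-\frac{1}{p}}\,\norm{f}_{L^p_\Omega(L^p(0,T;\gamma(\cH;\cX)))}
\end{equation*}
for $p \in [4,\infty)$, after which H\"older and a small adjustment extend this to $p \in [2,\infty)$ with the $L^\infty$ norm of $f$. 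Neither of your suggested substitutes cleanly delivers this in the required generality. The factorization method needs analytic smoothing, while here $S(t,t')$ is only assumed strongly continuous, with no regularization. The chaining route faces the genuine obstruction that $t \mapsto \int_0^t S(t,t')f(t')\rd W(t')$ is not a martingale (the kernel depends on the upper endpoint), so the pointwise-in-$t$ BDG bounds you propose refer to the martingales $s \mapsto \int_0^s S(t,t')f(t')\rd W(t')$ for frozen $t$; passing from a countable dense set of $t$'s to the supremum while retaining a $p$-independent constant is nontrivial and requires a quantitative modulus-of-continuity estimate in $t$, which is not supplied. As written, the central moment estimate is asserted rather than proved, and that is the entire analytic content of the proposition.

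A secondary, minor point: your displayed BDG-type inequality places the constant $M_T = \sup\norm{S(t,t')}_{\cL(\cX)}$ in front as if it ``absorbs the action of the evolution family on the $\gamma(\cH;\cX)$-norm.'' That is not how the dependence on $S$ enters in the actual maximal inequality; the constant in \cite[Theorem 4.5]{vanneerven_maximal_2020} depends on $\cX$ and $S$ jointly and is not simply a uniform operator bound times a BDG constant. Once you have the correct maximal inequality in hand, the remainder of your argument (bounding the $L^p$ norm of $f$ by its $L^\infty$ norm, then applying \cref{lem:tail}) is sound and matches the paper.
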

\begin{proof}
    Fix $T > 0$ and $f \in L^{\infty}_{\Omega}(L^{\infty}(0,T;\gamma(\cH;\cX)))$.
    By \cite[Theorem 4.5]{vanneerven_maximal_2020}, there is a constant $C$ (depending only on $\cX,S$) such that the estimate
    \begin{equation*}
        \norm[\Big]{\sup_{t \in [0,T]} \norm[\big]{\int_0^t S(t,t')f(t') \rd W(t')}_\cX}_{L^p_{\Omega}} 
        \leq C\sqrt{p}T^{\frac{1}{2} - \frac{1}{p}}\norm{f}_{L^{p}_{\Omega}(L^p(0,T;\gamma(\cH;\cX)))}
    \end{equation*}
    holds for all $p \in [4,\infty)$.
    Using H\"older's inequality, we find
    \begin{equation*}
        \norm[\Big]{\sup_{t \in [0,T]} \norm[\big]{\int_0^t S(t,t')f(t') \rd W(t')}_\cX}_{L^p_{\Omega}} 
        \leq C\max\cur{2,\sqrt{p}}T^{\frac{1}{2}}\norm{f}_{L^{\infty}_{\Omega}(L^{\infty}(0,T;\gamma(\cH;\cX)))}
    \end{equation*}
    for all $p \in [2,\infty)$.
    After using $2 \leq \sqrt{2}\sqrt{p}$, the result follows from \cref{lem:tail}.
\end{proof}

\section{Symmetry and linear stability}
\label{sec:linsym}
\subsection{Symmetry}
Let $\cX$ be a Banach space, and consider the evolution equation
\begin{equation}
    \label{eq:pde}
    \rd u = A u \rd t + F(u) \rd t,
\end{equation}
where $A$ is a closed (possibly unbounded) operator on $\cX$ with domain $\cD(A)$, and ${F \colon \cX \to \cX}$ is a nonlinear term.
Many (semilinear) PDEs can be written in this form.
A typical example is the case where $A$ is a variation on a Laplacian, and $F$ is a Nemytskii mapping on a function space with sufficient regularity.

Many interesting equations, especially physically motivated ones, are invariant under a group of continuous symmetries.
For PDEs formulated on $\bbR^d$, a common symmetry is invariance under translations and rotations, and this is the principal example we have in mind.
In this case, the symmetry group is the $d$-dimensional \emph{special Euclidean group}, denoted $\mathrm{SE}(d)$.
This group is generated by translations and rotations of $\bbR^d$, and is not commutative when $d \geq 2$.
However, there are many other examples of continuous symmetries, such as scalings, dilations, or more complicated gauge transformations.
This motivates the following assumption, which encodes such symmetries abstractly in the form of a Lie group.
For the reader unfamiliar with this subject, we refer to \cite{kirillov_introduction_2008}.
However, we do not require any theory beyond the basic notions of a Lie group, Lie algebra, and their representations.

\begin{assumption}[Symmetry of the equation]
    \label{ass:sympde}
    There exists a matrix Lie group $\grp$ and a group homomorphism $\Pi \colon \grp \to \cL(\cX)$ with the following properties:
    \begin{itemize}
        \item For $g \in \grp$ and $\phi \in \cD(A)$, we have $\Pi(g)\phi \in \cD(A)$ and
        \begin{equation}
            \label{eq:sympde}
            A\phi = \Pi(g) A \Pi(g^{-1})\phi, \qquad F(\phi) = \Pi(g)F(\Pi(g^{-1})\phi).
        \end{equation}
        \item For $\phi \in \cX$, the map $g \mapsto \Pi(g)\phi$ is continuous from $\grp$ to $\cX$. 
        \item There exists a constant $M$ such that $\norm{\Pi(g)}_{\cL(\cX)} \leq M$ for all $g \in \grp$.
    \end{itemize}
\end{assumption}
\begin{remark}
    \cref{ass:sympde} will guarantee that $\Pi(g)u(t)$ solves \cref{eq:pde} for any $g \in \grp$ whenever $u(t)$ solves \cref{eq:pde} and $u(t)$ is sufficiently regular.
\end{remark}
\begin{remark}
    By itself, \cref{ass:sympde} is trivial (as can be seen by taking $\grp$ as the trivial group).
    However, the requirement that $\grp$ is rich enough to capture all relevant symmetries of \cref{eq:pde} will be enforced by later assumptions.
\end{remark}

Although \cref{ass:sympde} is formulated in terms of the Lie group $\grp$, our following assumptions and results are formulated mostly in terms of its corresponding Lie algebra, which we denote by $\alg$.
By a slight abuse of notation, we will write $\exp$ or $e$ for the exponential map (which maps $\alg$ to $\grp$).
We also fix an arbitrary norm on $\alg$, to be used throughout the rest of the paper.
It is not important which norm we use, as all norms on $\alg$ are equivalent since $\alg$ is finite-dimensional.

A Lie group representation $\Pi$ typically gives rise to a Lie algebra representation $\pi$ via differentiation at the identity:
\begin{equation}
    \label{eq:piformula}
    \pi(Y) = \frac{\mathrm{d}}{\mathrm{d}t}\bigg\vert_{t = 0}\Pi(\exp(tY)), \quad Y \in \alg.
\end{equation}
However, if $\cX$ is infinite dimensional, $\pi(Y)$ is generally an unbounded operator, and we must take care that the limit inherent in \cref{eq:piformula} exists.
This motivates the following definition.
\begin{definition}
    \label{def:pi}
    Let $\grp$, $\Pi$ be as in \cref{ass:sympde}.
    For $Y \in \alg$, we define the unbounded operator
    \begin{equation}
        \pi(Y) \colon \phi \mapsto \lim_{t \to 0}t^{-1}(\Pi(e^{tY})\phi - \phi),
    \end{equation}
    where the limit is taken in the topology of $\cX$.
    The domain of $\pi(Y)$ consists of exactly the elements $\phi \in \cX$ for which this limit exists.
\end{definition}
\begin{remark}
    \cref{ass:sympde} directly implies that $t \mapsto \Pi(e^{tY})$ is a $C_0$-group on $\cX$ for any $Y \in \alg$.
    As $\pi(Y)$ can be seen to be its generator, it follows that $\pi(Y)$ is closed and densely defined.
\end{remark}

The main objects of study of this paper are \emph{symmetry solutions} to \cref{eq:pde}, by which we mean that the evolution of the solution is described purely by a symmetry of the equation.
Typical examples which we will keep in mind throughout are travelling waves, rotating waves, and stationary solutions.
It should be emphasized that existence of (nontrivial) symmetry solutions is generally a special property of a given equation.
A few references where such solutions are constructed are \cite{faria_nonmonotone_2006,alexander_topological_1990,arioli_existence_2015,kapitula_stability_1998,cohen_rotating_1978,hagan_spiral_1982,scheel_bifurcation_1998}.
Since our primary goal is to study nonlinear stability of stochastic perturbations of such solutions, we formulate their existence as an assumption.
\begin{assumption}[Existence of a regular symmetry solution]
    \label{ass:selfsimilarsolution}
    There exist $X \in \alg$ and $u^* \in \cD(A) \cap \cD(\pi(X))$ such that
    \begin{equation}
        \label{eq:uhat}
        \hat{u}(t) \coloneq \Pi(e^{tX})u^*, \quad t \geq 0,
    \end{equation}
    is a (strong) solution to \cref{eq:pde}.
    Furthermore, we have $u^* \in \cD(\pi(Y))$ for every $Y \in \alg$, and there exists a constant $C$ such that we have the estimate
    \begin{equation}
        \label{eq:taylorestimate}
        \norm{\Pi(e^Y)u^* - u^* - \pi(Y)u^*}_\cX \leq C \norm{Y}_{\alg}^2, \quad Y \in \alg.
    \end{equation}
\end{assumption}
\begin{remark}
    \cref{ass:selfsimilarsolution} covers stationary solutions, as can be seen by taking $X = 0$.
\end{remark}
As a concrete example, consider the case where $\cX = L^2(\bbR^d;\bbR)$, $\grp = \mathrm{SE}(d)$, and $X$ is the element in $\alg$ which generates translation by some vector $\vec{v} \in \bbR^d$.
Then we have $\Pi(\exp(tX))u^*(x) = u^*(x - t\vec{v})$, so \cref{ass:selfsimilarsolution} is satisfied if the equation has a travelling wave solution with wave velocity $\vec{v}$ and (sufficiently smooth) wave profile $u^*$.
It can also be seen that $\pi(X)$ is the directional derivative $-\partial_{\vec{v}}$.

\subsection{Dynamics in the comoving frame}
\label{subsec:dynamicscomoving}
We now transfer to a coordinate frame which is comoving with $\hat{u}(t)$.
Applying the transformation $\bar{u}(t) = \Pi(\exp(-tX))u(t)$, we get from \cref{eq:pde,ass:sympde}:
\begin{equation}
    \label{eq:pdecomoving}
    \rd \bar{u} = [A \bar{u} - \pi(X) \bar{u}] \rd t + F(\bar{u})\rd t.
\end{equation}
\cref{ass:selfsimilarsolution} then implies that $\bar{u}(t) \equiv u^*$ solves \cref{eq:pdecomoving} (in the strong sense). 
In fact, the reverse implication also holds, so we could replace \cref{ass:selfsimilarsolution} by the assumption that \cref{eq:pdecomoving} has a stationary solution $u^*$.

To study the dynamics of \cref{eq:pdecomoving} near $u^*$ (resp.\ \cref{eq:pde} near $\hat{u}(t)$), we will look at the linearization of \cref{eq:pdecomoving} around $u^*$.
The next assumption is sufficient for this linearization to be meaningful.
\begin{assumption}[Linearization]
    \label{ass:semigroup}
    The nonlinearity $F \colon \cX \to \cX$ is Fr\'echet differentiable at $u^*$.
    Furthermore, the operator
    \begin{equation}
        \label{eq:defLstar}
        \cL^* \colon \phi \mapsto A \phi - \pi(X)\phi + F'(u^*)\phi
    \end{equation}
    generates a bounded $C_0$-semigroup $\cur{S^*(t)}_{t \geq 0}$ on $\cX$.
\end{assumption}
Frow now on, we will use the terms $C_0$-semigroup and semigroup interchangeably.
We also emphasize that $\cL^*$ is \emph{not} an adjoint operator.
Instead, the $*$ superscript indicates that $\cL^*$ is associated with the comoving frame.
The same holds for the objects $S^*(t)$, $P^*_c$, and $P^*_s$, which will be introduced later.
\begin{remark}
    When the boundedness of $F'(u^*)$ is known, it follows by a perturbative argument that $\cL^*$ generates a semigroup whenever $A - \pi(X)$ generates a semigroup.
    As $A$ and $\pi(X)$ commute by \cref{ass:sympde}, verifying this can sometimes be reduced to checking that $A$ generates a semigroup, using the Trotter--Kato theorem.
\end{remark}
Naively, we might hope for the semigroup $S^*(t)$ to be exponentially stable, meaning that there exist constants $M,a > 0$ such that $\norm{S(t)}_{\cL(\cX)} \leq M e^{-at}$ for all $t \geq 0$.
By standard perturbative methods, this would imply that a solution which starts sufficiently close to $u^*$ will eventually converge to $u^*$.
However, it turns out that the presence of symmetries (in particular, non-triviality of the \emph{center space}) can pose a significant obstacle.

\begin{definition}
    \label{def:centerspace}
    The \emph{center space of $u^*$ with respect to $\alg$}, denoted by $\cV$, is defined as
    \begin{equation*}
        \cV \coloneq \cur{ \pi(Y)u^* : Y \in \alg}.
    \end{equation*}
\end{definition}
We emphasize that the center space is determined \emph{both} by the profile $u^*$ and the symmetry group $\grp$.
However, since we generally consider $u^*$ and $\grp$ to be fixed, we will simply speak of \emph{the} center space.
Note that $\pi(Y)u^*$ is well-defined by \cref{ass:selfsimilarsolution}, and the dimension of $\cV$ is at most that of $\alg$.
In particular, the center space is finite-dimensional, and thus closed in $\cX$.

It is important to note that the dimension of the center space can be strictly smaller than that of $\alg$.
The center space can in fact be trivial, even if $\alg$ is very rich.
In the case of PDEs on $\bbR^d$ with Euclidean symmetries, this occurs when $u^*$ is constant throughout space.

A guiding principle is that a nontrivial center space arises not due to symmetry directly, but due to a mismatch in symmetry between the profile $u^*$ and the equation.
The more symmetries present in the equation which are not shared by the profile $u^*$, the richer the center space.
Remarkably, the dynamics of $S^*(t)$ on the center space are entirely determined by the algebraic structure of $\alg$, as we will now demonstrate.

\begin{proposition}
    For any $Y \in \alg$ we have
    \begin{equation}
        \label{eq:Lstarcenter}
        \cL^* \pi(Y)u^* = \pi([Y,X])u^*.
    \end{equation}
    Consequently, $\cL^*$ can be restricted to a bounded operator on $\cV$.
\end{proposition}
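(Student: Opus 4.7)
The plan is to compute $\cL^* \pi(Y) u^*$ directly, term by term, by exploiting the symmetry identities of \cref{ass:sympde} and the stationary equation $Au^* - \pi(X)u^* + F(u^*) = 0$ for $u^*$ in the comoving frame. The whole argument reduces to combining two commutation identities with the definition \cref{eq:defLstar} of $\cL^*$.

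First, I would derive the commutation $A\pi(Y)u^* = \pi(Y)Au^*$. Starting from $A\Pi(e^{tY})u^* = \Pi(e^{tY})Au^*$ (from \cref{ass:sympde}), the right-hand side has a difference quotient whose limit as $t\to0$ is $\pi(Y)Au^*$ provided $Au^* \in \cD(\pi(Y))$. The left-hand side's difference quotient evaluates $A$ on $t^{-1}(\Pi(e^{tY})u^* - u^*) \to \pi(Y)u^*$; by closedness of $A$, the common limit forces $\pi(Y)u^* \in \cD(A)$ and the identity to hold. (Existence of the limit on the symmetry-preserved side gives $Au^* \in \cD(\pi(Y))$ as a byproduct.) Second, from $F(\Pi(e^{tY})u^*) = \Pi(e^{tY})F(u^*)$, differentiating at $t=0$ using the chain rule and the Fréchet differentiability of $F$ at $u^*$ yields $F'(u^*)\pi(Y)u^* = \pi(Y)F(u^*)$, with the right-hand differentiation (hence $F(u^*) \in \cD(\pi(Y))$) again forced by the existence of the left-hand derivative.

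With these two identities in hand, substitution into \cref{eq:defLstar} gives
\begin{align*}
\cL^* \pi(Y)u^* &= A\pi(Y)u^* - \pi(X)\pi(Y)u^* + F'(u^*)\pi(Y)u^* \\
&= \pi(Y)Au^* + \pi(Y)F(u^*) - \pi(X)\pi(Y)u^* \\
&= \pi(Y)\bigl[Au^* + F(u^*)\bigr] - \pi(X)\pi(Y)u^* \\
&= \pi(Y)\pi(X)u^* - \pi(X)\pi(Y)u^* \\
&= \pi([Y,X])u^*,
\end{align*}
where the penultimate line uses the stationary equation $Au^* + F(u^*) = \pi(X)u^*$ and the last line uses that $\pi$, obtained by differentiating the group homomorphism $\Pi$, is a Lie algebra representation when restricted to vectors (such as $u^*$) where the iterated compositions are defined. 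For the second claim, since $[Y,X] \in \alg$, the identity just established shows $\cL^*$ sends $\cV = \{\pi(Y)u^* : Y \in \alg\}$ into itself; as $\cV$ is finite-dimensional (of dimension at most $\dim\alg$), the restriction $\cL^*|_\cV$ is automatically a bounded linear operator.

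The main technical obstacle is the bookkeeping of domains in the first step: making rigorous that $\pi(Y)u^* \in \cD(A) \cap \cD(\pi(X))$ and that the iterated operators $\pi(Y)\pi(X)u^*$, $\pi(X)\pi(Y)u^*$ are well-defined with $[\pi(Y),\pi(X)]u^* = \pi([Y,X])u^*$. This should follow from the hypothesis $u^* \in \cD(\pi(Y))$ for every $Y \in \alg$ in \cref{ass:selfsimilarsolution}, combined with the Taylor-type bound \cref{eq:taylorestimate}, by differentiating the identity $\Pi(e^{sY})\Pi(e^{tX})u^* = \Pi(e^{sY}e^{tX})u^*$ in both parameters and using that in a matrix Lie group the Baker--Campbell--Hausdorff formula gives $e^{sY}e^{tX}$ smoothly in $(s,t)$.
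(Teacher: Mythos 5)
Your computation reaches the right answer, and the use of the transported symmetries and the stationarity equation $Au^* + F(u^*) = \pi(X)u^*$ is the right ingredient list, but the term-by-term organization creates a domain gap that the paper's grouping is specifically designed to avoid, and I don't believe your proposed fix closes it.

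The issue is the very first line of your final display, $\cL^*\pi(Y)u^* = A\pi(Y)u^* - \pi(X)\pi(Y)u^* + F'(u^*)\pi(Y)u^*$. This requires $\pi(Y)u^* \in \cD(A) \cap \cD(\pi(X))$, which is strictly stronger than $\pi(Y)u^* \in \cD(\cL^*)$; cancellations between $A$ and $\pi(X)$ can make $\cD(\cL^*)$ genuinely larger. None of the hypotheses supply $\pi(Y)u^* \in \cD(A)$ or $\pi(Y)u^* \in \cD(\pi(X))$. Your argument that ``existence of the limit on the symmetry-preserved side gives $Au^* \in \cD(\pi(Y))$ as a byproduct'' is circular: the right-hand difference quotient $t^{-1}(\Pi(e^{tY})Au^* - Au^*)$ converges \emph{if and only if} $Au^* \in \cD(\pi(Y))$, and there is nothing else in the chain forcing it to converge, because the left-hand side $A\Pi(e^{tY})u^*$ is not known a priori to be differentiable either. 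The same objection applies to your last step $\pi(Y)\pi(X)u^* - \pi(X)\pi(Y)u^* = \pi([Y,X])u^*$, which needs $\pi(X)u^* \in \cD(\pi(Y))$ and $\pi(Y)u^* \in \cD(\pi(X))$; the Baker--Campbell--Hausdorff remark at the end doesn't supply these memberships, since smoothness in $(s,t)$ of $e^{sY}e^{tX}$ as a matrix-valued function does not by itself give differentiability of $\Pi(e^{sY}e^{tX})u^*$ in both parameters.

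The paper sidesteps all of this by never decomposing $\cL^*$ into its three summands when applied to $\pi(Y)u^*$. It applies $\Pi(e^{tY})$ to the stationarity equation, which yields an identity valid for every $t$ with the pure commutator $\Pi(e^{tY})\pi(X)u^* - \pi(X)\Pi(e^{tY})u^*$ on one side. It shows this side is differentiable at $t=0$ with derivative $\pi([Y,X])u^*$. On the other side, after subtracting the $F$-term (whose derivative exists by Fr\'echet differentiability and the chain rule), what remains is $(A - \pi(X))\Pi(e^{tY})u^*$. The operator $A - \pi(X) = \cL^* - F'(u^*)$ is closed because $\cL^*$ is a semigroup generator and $F'(u^*)$ is bounded, so closedness of this single combined operator gives $\pi(Y)u^* \in \cD(A - \pi(X)) = \cD(\cL^*)$ and the identity, without ever requiring $\pi(Y)u^*$ to lie in $\cD(A)$ or $\cD(\pi(X))$ separately. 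If you want to salvage your proof, you would need to regroup in the same way: treat $A - \pi(X)$ as one operator and replace your two separate commutation lemmas with a single statement about $(A-\pi(X))\pi(Y)u^*$.
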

\begin{proof}
    Let $Y \in \alg$.
    By \cref{ass:sympde} and the fact that $u^*$ is a stationary solution to \cref{eq:pdecomoving}, we have for $t \in \bbR$
    \begin{equation*}
        A(\Pi(\exp(tY))u^*) + F(\Pi(\exp(tY))u^*) - \Pi(\exp(tY))\pi(X)u^* = 0,
    \end{equation*}
    which we rewrite as
    \begin{equation}
    \label{eq:Lstarcenter1}
    \begin{aligned}
        A(\Pi(\exp(tY))u^*) &+ F(\Pi(\exp(tY))u^*) - \pi(X)\Pi(\exp(tY))u^* \\
        &= \Pi(\exp(tY))\pi(X)u^* - \pi(X)\Pi(\exp(tY))u^*.
    \end{aligned}
    \end{equation}
    We now claim that
    \begin{equation*}
        \frac{\mathrm{d}}{\mathrm{d}t}\bigg\vert_{t = 0} \bra[\big]{\Pi(\exp(tY))\pi(X)u^* - \pi(X)\Pi(\exp(tY))u^*} = \pi([Y,X])u^*.
    \end{equation*}
    If $\pi(X)u^* \in \cD(\pi(Y))$ and $\pi(Y)u^* \in \cD(\pi(X))$, this follows directly from \cref{def:pi}.
    If not, we approximate and use the fact that $\pi(X)$ and $\pi(Y)$ are closed and densely defined.
    Thus, the derivative at $t=0$ of the right-hand side of \cref{eq:Lstarcenter1} is well-defined and equal to $\pi([Y,X])u^*$.
    Since $\pi(X)$ and $A$ are closed and $F$ is differentiable at $u^*$, we may differentiate the left-hand side of $\cref{eq:Lstarcenter1}$ at $t = 0$ to get \cref{eq:Lstarcenter}.
    Thus, $\cL^*$ maps $\cV$ into $\cV$, and the boundedness follows since $\cV$ is finite-dimensional.
\end{proof}
The relation \eqref{eq:Lstarcenter} prompts us to define the (bounded) linear map
\begin{equation}
\begin{aligned}
    \label{eq:defL}
    L \colon \alg &\to \alg, \\
    Y &\mapsto [Y,X],
\end{aligned}
\end{equation}
so that \eqref{eq:Lstarcenter} can be formulated as
\begin{equation}
    \label{eq:LstarL}
    \cL^* \pi(Y)u^* = \pi(LY)u^*.
\end{equation}
Since the center space is finite-dimensional, we can define $e^{tL}$ and $e^{t\cL^*\vert_{\cV}}$ via the usual power series,
in which case $e^{t\cL^*\vert_{\cV}}$ coincides with $S^*(t)\vert_\cV$.
We now show that the relation \cref{eq:LstarL} lifts to a relation between $S^*(t)$ and $e^{tL}$.
This implies that the dynamics of $S^*(t)$ on $\cV$ are encoded entirely in the Lie bracket.
\begin{proposition}
    \label{prop:Sstarcenter}
    For $t \geq 0$ and $Y \in \alg$ we have
    \begin{equation}
        \label{eq:Sstarcenter}
        S^*(t)\pi(Y)u^* = \pi(e^{tL}Y) u^* = \pi(e^{-tX}Ye^{tX})u^*.
    \end{equation}
\end{proposition}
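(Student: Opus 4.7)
The plan is to reduce the statement to an identity on the finite-dimensional center space $\cV$, where we can exponentiate without worrying about unbounded operators, and then identify the matrix exponential $e^{tL}$ with the adjoint action.

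First, I would argue that $\cV$ is invariant under the semigroup $S^*(t)$, and that on $\cV$ the semigroup coincides with the ordinary matrix exponential of the bounded operator $\cL^*|_\cV$. By \cref{eq:Lstarcenter} every $\pi(Y)u^* \in \cV$ lies in $\cD(\cL^*)$ with $\cL^*\pi(Y)u^* \in \cV$, so $\cL^*$ restricts to a bounded operator $\cL^*|_\cV \colon \cV \to \cV$. For any $v \in \cV$, both $t \mapsto S^*(t)v$ and $t \mapsto e^{t\cL^*|_\cV}v$ are continuous $\cX$-valued functions satisfying the same Cauchy problem $\partial_t u = \cL^* u$ with $u(0) = v$, so uniqueness of solutions to this ODE (guaranteed by $v \in \cD(\cL^*)$ and the semigroup property) yields $S^*(t)|_\cV = e^{t\cL^*|_\cV}$.

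Next, I would introduce the linear map $\Phi \colon \alg \to \cV$, $\Phi(Y) = \pi(Y) u^*$. The identity \cref{eq:LstarL} is precisely the intertwining relation $\cL^*|_\cV \circ \Phi = \Phi \circ L$. Iterating gives $(\cL^*|_\cV)^n \Phi(Y) = \Phi(L^n Y)$ for all $n \geq 0$, and summing the exponential power series, which converges in operator norm since both $\cL^*|_\cV$ and $L$ are bounded on finite-dimensional spaces, yields
\begin{equation*}
    S^*(t)\pi(Y)u^* = e^{t\cL^*|_\cV} \Phi(Y) = \Phi(e^{tL}Y) = \pi(e^{tL}Y)u^*,
\end{equation*}
which is the first claimed equality.

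For the second equality, I would invoke the standard Lie-theoretic identification of $L$ with an adjoint derivation. By \cref{eq:defL}, $LY = [Y,X] = -[X,Y] = -\ad_X Y$, so $L = -\ad_X$, and hence $e^{tL} = e^{-t\ad_X} = \Ad_{e^{-tX}}$, where the last equality is the standard relation $\Ad \circ \exp = \exp \circ \ad$ for matrix Lie groups. Evaluating on $Y$ gives $e^{tL}Y = e^{-tX} Y e^{tX}$, which completes the proof. The only step with any subtlety is the invariance argument of the first paragraph; once this is in place, everything reduces to finite-dimensional linear algebra, and no further domain issues for unbounded operators need to be addressed.
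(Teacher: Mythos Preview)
Your proof is correct and follows essentially the same approach as the paper: both reduce to finite dimensions by noting that $S^*(t)|_\cV$ coincides with the power-series exponential $e^{t\cL^*|_\cV}$, iterate the intertwining relation \cref{eq:LstarL} term by term in the exponential series, and then invoke $L = -\ad_X$ together with the classical identity $e^{-\ad_X}Y = e^{-X}Ye^{X}$ for the second equality. Your explicit justification that $S^*(t)|_\cV = e^{t\cL^*|_\cV}$ via uniqueness for the Cauchy problem is slightly more detailed than the paper's one-line remark preceding the proposition, but the arguments are otherwise the same.
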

\begin{proof}
    Since the power series of $e^{t\cL^*\vert_{\cV}}$ and $e^{tL}$ both converge in the uniform topology, the first identity in \cref{eq:Sstarcenter} follows by iterating \cref{eq:LstarL}:
    \begin{equation*}
        S^*(t)\pi(Y)u^* = \sum_{n=0}^{\infty}\frac{(t\cL^*)^n}{n!}\pi(Y)u^* 
        = \pi\bra[\Big]{\sum_{n=0}^{\infty}\frac{(tL)^n}{n!}Y}u^*
        = \pi(e^{tL}Y)u^*.
    \end{equation*}
    We now observe that $L = -\ad_{X}$, where $\ad$ denotes the adjoint representation of $\alg$ on $\alg$.
    Thus, the second identity in \cref{eq:Sstarcenter} follows from the classical identity (see \cite[Lemma 3.14]{kirillov_introduction_2008})
    \begin{equation*}
        e^{-\ad_X} Y = e^{-X} Y e^{X}. \qedhere
    \end{equation*}
\end{proof}
As a final remark, we note that \cref{eq:Sstarcenter} immediately implies
\begin{equation}
    \label{eq:ustarinvariant}
    S^*(t)\pi(X)u^* = \pi(X)u^*, \quad t \geq 0,
\end{equation}
from which we see that $\pi(X)u^*$ is invariant under the dynamics of $S^*(t)$.
\subsection{Linear stability in the comoving frame}
\label{subsec:stabilitycomoving}
From \cref{prop:Sstarcenter}, it should now be clear (especially considering \cref{eq:ustarinvariant}) that we cannot expect exponential stability of $S^*(t)$ in general, unless the center space is trivial.
However, in many cases, a stability estimate can be recovered after `projecting out' the center space.
Essentially, this gives stability `modulo symmetry', and this is the reason why the concept of \emph{orbital stability} is needed.
Thus, our goal is now to find a space $\cW$ which is complementary to $\cV$ (in the sense that $\cX$ is the direct sum of $\cV$ and $\cW$) such that $S^*(t)$ leaves $\cW$ invariant and is exponentially stable on $\cW$.
We then call $\cW$ the \emph{stable space}.
If such a stable space exists, we can decompose the dynamics near $u^*$ into two parts: the dynamics on $\cV$ (which are purely determined by $\alg$), and the dynamics on $\cW$ (which are exponentially stable).
The next assumption guarantees that we have this decomposition.

\begin{assumption}[Decomposition]
    \label{ass:decomposition}
    There exist projections $P^*_c,P^*_s \in \cL(\cX)$ with the following properties:
    \begin{itemize}
        \item We have the decomposition $\eye = P^*_c + P^*_s$.
        \item The range of $P^*_c$ coincides with the center space $\cV$.
        \item The range of $P^*_s$ is stable under $S^*(t)$: there exist constants $M,a > 0$ such that
        \begin{equation}
            \label{eq:linstab}
            \norm{S^*(t)P^*_s}_{\cL(\cX)} \leq M e^{-at}, \quad t \geq 0.
        \end{equation}
    \end{itemize}
\end{assumption}
The stable space $\cW$ is given by the range of $P^*_s$.
The subscripts in $P^*_c$ and $P^*_s$ stand for \emph{center} and \emph{stable}, motivated by the fact that they project onto the center space and the stable space, respectively.
By \cref{def:centerspace} and finite-dimensionality of $\alg$, it follows that $P^*_c$ factorizes through a bounded linear map $\cP \colon \cX \to \alg$ as follows:
    \begin{equation}
        \label{eq:defP}
        P^*_c\phi = \pi(\cP \phi)u^*, \quad \phi \in \cX.
    \end{equation}
Typically, \cref{ass:decomposition} is verified using spectral methods and PDE techniques.
If $\cX$ is a Hilbert space, a sufficient condition on the spectrum of $\cL^*$ may be formulated using the Gearhart--Pr\"uss theorem.
In this case, it suffices that the spectrum is of the form
\begin{equation}
    \label{eq:negativespectrum}
    \sigma(\cL^*_\bbC) \subset \sigma(L_\bbC) \cup \cur{ z \in \bbC : \Real z \leq -b}
\end{equation}
for some $b > 0$, and that the eigenspaces corresponding to $\sigma(L_\bbC)$ are spanned by $\cV_\bbC$ 
(the condition \cref{eq:negativespectrum} is formulated with respect to the complexification of $L$ and $\cL^*$, since these operators are typically not selfadjoint).
The assumptions on the spectrum are then verified by analyzing the linearized PDE directly.

We note that versions of \cref{ass:decomposition} and the spectral condition \cref{eq:negativespectrum} are common in the literature on stability of deterministic and stochastic patterns.
For a (non-exhaustive) list of examples where this is explicitly assumed or proven, one can consider \cite[Lemma 3.1]{hamster_stability_2020}, \cite[Assumption 3.1]{inglis_general_2016}, \cite[Assumption 3.5]{kuehn_stochastic_2022}, \cite[Assumption 2.6]{maclaurin_phase_2023}, \cite[Proposition 2.8]{eichinger_multiscale_2022}, \cite{alexander_topological_1990}.
Moreover, \cref{ass:decomposition} can be obtained as a corollary of many of the other cited linear stability results.

We also emphasize that \cref{eq:negativespectrum} allows $\cL^*_{\bbC}$ to have (point) spectrum which is located on the imaginary axis and not at the origin.
This actually occurs for the rotating wave, where we have $\sigma(L_\bbC) = \cur{0,\pm i \omega}$.
One can compare \cref{eq:negativespectrum} with \cite[Assumption 2.6]{maclaurin_phase_2023}, which explicitly forbids this situation and only treats commutative symmetry.
In the commutative case, $L \equiv 0$ and all spectrum on the imaginary axis must be at the origin.

\subsection{Return to the stationary frame}
We ultimately want to solve a stochastic version of \cref{eq:pde} in the stationary frame.
To do this, we first need to formulate a solution concept.
Motivated by \cref{ass:semigroup}, we first rewrite \cref{eq:pdecomoving} as
\begin{align*}
    \rd \bar{u} &= [A - \pi(X) + F'(u^*)]\bar{u} \rd t + [F(\bar{u}) - F'(u^*)\bar{u}] \rd t \\
        \overset{\eqref{eq:defLstar}}&{=} \cL^* \bar{u} \rd t + [F(\bar{u}) - F'(u^*)\bar{u}] \rd t.
\end{align*}
By Duhamel's principle (variation of parameters), a solution to \cref{eq:pdecomoving} with initial value $u_0 \in \cX$ should then satisfy
\begin{equation*}
    \bar{u}(t) = S^*(t)u_0 + \int_0^t S^*(t-t')\bra[\big]{F(\bar{u}(t')) - F'(u^*)\bar{u}(t')} \rd t'.
\end{equation*}
Undoing the transformation $\bar{u}(t) = \Pi(e^{-tX})u(t)$ and using the symmetries of $F$ from \cref{ass:sympde}, it follows that a solution to \cref{eq:pde} with initial value $u_0$ should satisfy the \emph{mild solution formula}
\begin{equation*}
    u(t) = \Pi(e^{tX})S^*(t)u_0 + \int_0^t \Pi(e^{tX})S^*(t-t')\Pi(e^{-t'X})\bra[\big]{F(u(t')) - F'(\hat{u}(t'))u(t')} \rd t'.
\end{equation*}
We now define for $0 \leq t' \leq t$ the following bounded linear operators ($P_c(t)$ and $P_s(t)$ are intended for later purposes):
\begin{subequations}
\label{eq:stationaryoperators}
\begin{align}
    \label{eq:defStt}
    S(t,t') &\coloneq \Pi(e^{tX})S^*(t-t')\Pi(e^{-t'X}), \\
    \label{eq:defPc}
    P_c(t) &\coloneq \Pi(e^{tX})P^*_c\Pi(e^{-tX}), \\
    \label{eq:defPs}
    P_s(t) &\coloneq \Pi(e^{tX})P^*_s\Pi(e^{-tX}),
\end{align}
\end{subequations}
so that the solution formula simplifies to its final version
\begin{equation}
    \label{eq:milddet}
    u(t) = S(t,0)u_0 + \int_0^t S(t,t')\bra[\big]{F(u(t')) - F'(\hat{u}(t'))u(t')} \rd t'.
\end{equation}
The following proposition translates the implications of \cref{ass:sympde,ass:selfsimilarsolution,ass:semigroup,ass:decomposition} back to the stationary frame.
\begin{proposition}
    \label{prop:stationarysummary}
    The following statements hold:
    \begin{itemize}
        \item $\cur{S(t,t')}_{0 \leq t' \leq t}$ is a $C_0$-evolution family on $\cX$.
        \item For $t \geq 0$, the bounded operators $P_c(t)$ and $P_s(t)$ are projections, and we have the decomposition $\eye = P_c(t) + P_s(t)$.
        \item For $0 \leq t' \leq t$ and $\phi \in \cX$ we have the identities
        \begin{subequations}
            \label{eq:Sttidentities}
        \begin{align}
            \label{eq:SttPc}
            S(t,t')P_c(t')\phi &= \Pi(e^{tX})\pi(e^{(t-t')L}\cP\Pi(e^{-t'X})\phi)u^*, \\
            \label{eq:Sttcenter}
            S(t,t')\pi(Y)\hat{u}(t') &= \Pi(e^{tX})\pi(e^{tL}Y)u^*.
        \end{align}
        \end{subequations}
        \item There exist constants $M_1,M_2,M_3,a > 0$ such that we have the estimates
            \begin{subequations}
                \label{eq:Sttestimates}
                \begin{align}
                    \label{eq:SttestimatePi}
                    \norm{\Pi(g)}_{\cL(\cX)} &\leq M_1, \\
                    \label{eq:SttestimateStt}
                    \norm{S(t,t')}_{\cL(\cX)} &\leq M_2, \\
                    \label{eq:SttestimateSttPs}
                    \norm{S(t,t')P_s(t')}_{\cL(\cX)} &\leq M_3e^{-a(t-t')},
                \end{align}
            \end{subequations}
            for all $0 \leq t' \leq t$ and $g \in \grp$.
    \end{itemize}
\end{proposition}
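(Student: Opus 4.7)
The plan is to verify each of the four bullet points by direct computation from the definitions in \cref{eq:stationaryoperators}, combining them with the assumptions and with \cref{prop:Sstarcenter}. None of the steps seems to require a new idea; the content of the proposition is mainly that the objects behave as the notation suggests.

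For the evolution family property, I would directly compute, for $0 \le t' \le t'' \le t$,
\begin{equation*}
    S(t,t'')S(t'',t') = \Pi(e^{tX})S^*(t-t'')\Pi(e^{-t''X})\Pi(e^{t''X})S^*(t''-t')\Pi(e^{-t'X}) = S(t,t'),
\end{equation*}
using that $\Pi$ is a group homomorphism and the semigroup property of $S^*$. Strong continuity of $(t,t') \mapsto S(t,t')\phi$ follows from strong continuity of $S^*$ together with the continuity of $g \mapsto \Pi(g)\phi$ from \cref{ass:sympde}. The projection identities $P_c(t)^2 = P_c(t)$, $P_s(t)^2 = P_s(t)$, and $P_c(t) + P_s(t) = \eye$ follow by the same kind of cancellation, using that $P^*_c, P^*_s$ are complementary projections by \cref{ass:decomposition}.

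For \cref{eq:SttPc}, I would peel off the transforms one at a time: substitute the definitions of $S(t,t')$ and $P_c(t')$ to get
\begin{equation*}
    S(t,t')P_c(t')\phi = \Pi(e^{tX})S^*(t-t')P^*_c \Pi(e^{-t'X})\phi,
\end{equation*}
rewrite $P^*_c \Pi(e^{-t'X})\phi = \pi(\cP\Pi(e^{-t'X})\phi)u^*$ via \cref{eq:defP}, and apply \cref{eq:Sstarcenter}. For \cref{eq:Sttcenter}, the key ingredient is the intertwining relation
\begin{equation*}
    \pi(Y)\Pi(e^{sX})\phi = \Pi(e^{sX})\pi(\Ad_{e^{-sX}}Y)\phi = \Pi(e^{sX})\pi(e^{sL}Y)\phi,
\end{equation*}
which one obtains by differentiating $\Pi(e^{tY})\Pi(e^{sX}) = \Pi(e^{sX})\Pi(\exp(t \Ad_{e^{-sX}}Y))$ at $t = 0$ and using $\Ad_{e^{-sX}} = e^{-s\ad_X} = e^{sL}$ (as observed in the proof of \cref{prop:Sstarcenter}). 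Applying this with $s = t'$ gives $\pi(Y)\hat{u}(t') = \Pi(e^{t'X})\pi(e^{t'L}Y)u^*$, after which \cref{eq:Sttcenter} follows from the definition of $S(t,t')$ and a second application of \cref{eq:Sstarcenter}.

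The estimates in \cref{eq:Sttestimates} are then immediate: \cref{eq:SttestimatePi} is part of \cref{ass:sympde}; \cref{eq:SttestimateStt} follows by factoring through $S^*$ and using that $S^*$ is a bounded semigroup by \cref{ass:semigroup} together with \cref{eq:SttestimatePi}; and \cref{eq:SttestimateSttPs} follows by absorbing the projection $\Pi(e^{t'X})P^*_s\Pi(e^{-t'X})$ from $P_s(t')$ into the semigroup factor to obtain
\begin{equation*}
    S(t,t')P_s(t') = \Pi(e^{tX})S^*(t-t')P^*_s\Pi(e^{-t'X}),
\end{equation*}
and then invoking \cref{eq:linstab}. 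The only step that requires any care is \cref{eq:Sttcenter}, where one has to make sure the domain considerations of $\pi(Y)$ do not cause trouble; this should be handled exactly as in the proof of \cref{prop:Sstarcenter}, by first restricting to elements where the relevant derivatives exist and then using closedness and density.
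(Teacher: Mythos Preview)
Your proposal is correct and follows essentially the same approach as the paper's proof, which is terser: the paper only writes out the chain of equalities for \cref{eq:SttPc} and \cref{eq:Sttcenter} and declares the rest straightforward. For \cref{eq:Sttcenter} the paper uses the same conjugation identity $\Pi(e^{-t'X})\pi(Y)\Pi(e^{t'X})u^* = \pi(e^{t'L}Y)u^*$ that you derive (citing \cref{eq:Sstarcenter} for it), so your more explicit treatment of this step and of the domain issues is, if anything, a slight improvement in rigor.
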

Although \cref{eq:SttestimatePi} was already stated in \cref{ass:sympde}, we include it again here to have all the relevant constants in one place for later use.
\begin{proof}
    Most statements follow straightforwardly from \cref{eq:stationaryoperators} and the corresponding properties of $S^*(t)$, $P^*_c$, and $P^*_s$, so we only prove \cref{eq:Sttidentities}.
    From \cref{ass:decomposition,prop:Sstarcenter} we see that
    \begin{align*}
        S(t,t')P_c(t')\phi &\overset{\eqref{eq:stationaryoperators}}{=} \Pi(e^{tX})S^*(t-t')P^*_c\Pi(e^{-t'X})\phi \\
        &\overset{\eqref{eq:defP}}{=} \Pi(e^{tX})S^*(t-t')\pi(\cP\Pi(e^{-t'X}\phi))u^* \\
        &\overset{\eqref{eq:Sstarcenter}}{=}\Pi(e^{tX})\pi(e^{(t-t')L}\cP\Pi(e^{-t'X}\phi))u^*,
    \end{align*}
    as well as
    \begin{align*}
        S(t,t')\pi(Y)\hat{u}(t') \overset{\eqref{eq:uhat},\eqref{eq:defStt}}&{=} \Pi(e^{tX})S^*(t-t')\Pi(e^{-t'X})\pi(Y)\Pi(e^{t'X})u^* \\
        \overset{\eqref{eq:Sstarcenter}}&{=} \Pi(e^{tX})S^*(t-t')\pi(e^{t'L}Y)u^* \\
        \overset{\eqref{eq:Sstarcenter}}&{=} \Pi(e^{tX})\pi(e^{(t-t')L}e^{t'L}Y)u^*,
    \end{align*}
    which implies \cref{eq:Sttcenter}.
\end{proof}

\section{Nonlinear stability}
\label{sec:nonlinstab}

\subsection{Orbital stability and the predicted phase}
\label{subsec:orbstab}
We now study the stability of $\hat{u}(t)$ in the full nonlinear equation \cref{eq:pde}.
As the discussion in the previous section suggests, we generally cannot expect solutions starting close to $\hat{u}(0) = u^*$ to converge to $\hat{u}(t) = \Pi(e^{tX})$.
Instead, we will find that a solution which starts close to $u^*$ will, after some positive time $t$, be close to $\Pi(\gamma)u^*$ for a $\gamma \in \grp$ which is different from $e^{tX}$.
Hence, it is the \emph{center manifold} $\cC$, given by the group orbit $\cC \coloneq \cur{\Pi(g)u^* : g \in \grp}$ which is stable, instead of the solution $\hat{u}(t)$.
This is precisely what we mean by \emph{orbital stability}.
We aim to answer the following questions:
\begin{enumerate}
    \item How do we prove stability of the center manifold?
    \item Which symmetries should be included in $\grp$ for the center manifold to be stable?
    \item How can we compute the \emph{phase} $\gamma \in \grp$ from the initial perturbation $u_0 - u^*$?
\end{enumerate}
These questions are strongly interlinked.
To prove stability, we will need to know the right symmetry group and the correct phase in advance.
Conversely, a stability proof ensures that the phase shift and the symmetry group used are the correct ones.

To answer the second and third question, we look to the linear theory developed in the previous section.
We consider a mild solution $u(t)$ to \cref{eq:pde} with initial condition $u(0) = u^* + v_0$, where $v_0 = \cO(\eps)$ with $\eps \ll 1$.
Since the evolution family $S(t,t')$ arises from linearization around $\hat{u}(t)$, we can heuristically expect that
\begin{equation}
    \label{eq:uapprox}
    u(t) = \hat{u}(t) + S(t,0)v_0 + \cO(\eps^2).
\end{equation}
Using the decomposition $\eye = P_c(0) + P_s(0)$ from \cref{prop:stationarysummary} together with \cref{eq:SttPc}, we find
\begin{equation*}
    u(t) = \Pi(e^{tX})u^* + \Pi(e^{tX})\pi(e^{tL}\cP v_0)u^* +  S(t,0)P_s(0)v_0 + \cO(\eps^2).
\end{equation*}
In the first two terms on the right-hand side, we recognize an abstract Taylor expansion of $\Pi(e^{tX}\exp(e^{tL}\cP v_0))u^*$ (c.f.\ \cref{eq:taylorestimate}).
The third term is exponentially decaying with time by \cref{eq:SttestimateSttPs}.
Hence, we have
\begin{equation}
    \label{eq:orbstabintuitive}
    u(t) = \Pi\bra[\big]{\exp(tX)\exp(e^{tL}\cP v_0)}u^* + \cO(\eps e^{-at} + \eps^2),
\end{equation}
which shows that the difference between $u(t)$ and $\Pi\bra[\big]{\exp(tX)\exp(e^{tL}\cP v_0)}u^*$ is expected to be smaller than the difference between $u(0)$ and $u^*$, when $t$ is large enough and $\eps$ is sufficiently small.
Since the exponential decay from \cref{eq:SttestimateSttPs} originates from \cref{ass:decomposition}, it is now clear that we must include in $\grp$ enough symmetries for this assumption to hold.
Recalling the discussion in \cref{sec:linsym}, the orbital degrees of freedom thus originate from exactly those symmetries which are present in the equation \cref{eq:pde} but not in the profile $u^*$.

Motivated by \cref{eq:orbstabintuitive}, we now introduce the \emph{predicted phase at time $t$}, denoted $\gamma_t \colon \cX \to \grp$, as
\begin{equation}
    \label{eq:predictedphase}
    \gamma_t \colon v_0 \mapsto \exp(tX)\exp(e^{tL}\cP v_0).
\end{equation}
This is the notion of phase which we will employ to show orbital stability.
In \cref{eq:predictedphase}, $v_0$ should be interpreted as the deviation from the profile $u^*$ at time $t = 0$.
When $v_0$ is clear from the context, we will simply write $\gamma_t$ instead of $\gamma_t(v_0)$ to unburden the notation.
\begin{remark}
    If $v_0 = 0$, then $\gamma_t = \exp(tX)$. This corresponds to the fact that $\hat{u}(t) = \Pi(e^{tX})u^*$ is an exact solution of \cref{eq:pde}. Hence, the predicted phase matches the exact phase in the absence of a perturbation.
    Moreover, this suggests that we can interpret $\exp(e^{tL}\cP v_0)$ in \cref{eq:predictedphase} as the \emph{phase correction} relative to the unperturbed evolution.
\end{remark}
\begin{remark}
    It is seen from \cref{eq:predictedphase,eq:defL} that any noncommutativity of $\alg$ (encoded in $L$), enters the predicted phase in a nontrivial way.
\end{remark}
A compelling feature of the predicted phase is that it is determined directly from the initial perturbation $v_0$.
Moreover, the right-hand side of \cref{eq:predictedphase} can often be computed explicitly.
In most concrete situations, the symmetry $\exp(tX)$ is explicitly known, so that the matrices $L$ and $e^{tL}$ can be calculated by hand.
Furthermore, $\cP$ can often be described in terms of the eigenfunctions of the formal adjoint of $\cL^*$.
We believe this feature sets our method apart from the various phase tracking methods in the literature.
For explicit computations of \cref{eq:predictedphase} in concrete situations, we refer ahead to \cref{rem:fhnphase,subsec:rotwave}.

As a second distinguishing feature, we note that the predicted phase, as well as all of our assumptions so far, is formulated without any reference to a Hilbertian structure of $\cX$.
In fact, up to this point, $\cX$ can be an arbitrary Banach space.
We believe this flexibility to be advantageous, especially if one wants to treat measure-valued equations, or (S)PDEs which are well-posed in an $L^p$-setting with $p \neq 2$ (see e.g.\ \cite{agresti_nonlinear_2022a,agresti_reactiondiffusion_2023,agresti_nonlinear_2022}).

\subsection{Stability in the deterministic setting}
Our next step is to use the predicted phase to make \cref{eq:orbstabintuitive} rigorous.
This will answer the primary question on how to show stability in the deterministic setting,
and also demonstrates that the phase shift predicted by \cref{eq:predictedphase} is indeed accurate.

We begin by formulating a lemma which shows how \cref{eq:orbstabintuitive} follows from \cref{eq:uapprox}, and which will be used for the coming orbital stability proofs.
The way in which we have written \cref{lem:nonlinstep1} is suggestive of how we will apply it.
\begin{lemma}
    \label{lem:nonlinstep1}
    Let $T > 0$, $v_0 \in \cX$ and $f \in C([0,T];\cX)$.
    Let $\gamma_t$ be given by \cref{eq:predictedphase}, and set
    \begin{equation*}
        u(t) \coloneq \hat{u}(t) + S(t,0)v_0 + f(t), \quad t \in [0,T].
    \end{equation*}
    Then we have the estimate
    \begin{equation*}
        \norm{u(t) - \Pi(\gamma_t)u^*}_\cX 
        \leq M_3e^{-at}\norm{v_0}_\cX + C M_1 K_T^2 \norm{v_0}^2_\cX + \norm{f(t)}_\cX, \quad t \in [0,T],
    \end{equation*}
    where $C$ is the constant from \cref{eq:taylorestimate}, 
    $M_1,M_3,a$ are the constants from \cref{eq:Sttestimates}, 
    and $K_T = \sup_{t \in [0,T]}\norm{e^{tL}\cP}_{\cL(\cX;\alg)}$.
\end{lemma}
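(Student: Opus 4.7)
The plan is to prove this by rewriting $u(t) - \Pi(\gamma_t) u^*$ as a sum of three terms which can each be controlled by one of the three terms on the right-hand side: the decaying stable part, the quadratic Taylor remainder from the group action, and the residual $f(t)$.

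First I would decompose the contribution of $v_0$ using the splitting $\eye = P_c(0) + P_s(0)$ from \cref{prop:stationarysummary}, writing
\begin{equation*}
    S(t,0)v_0 = S(t,0)P_c(0)v_0 + S(t,0)P_s(0)v_0.
\end{equation*}
By \cref{eq:SttPc} applied at $t' = 0$, the center-space part simplifies to
\begin{equation*}
    S(t,0)P_c(0)v_0 = \Pi(e^{tX})\pi(e^{tL}\cP v_0)u^*.
\end{equation*}
Together with $\hat u(t) = \Pi(e^{tX})u^*$ and the definition \cref{eq:predictedphase} of $\gamma_t$, this gives
\begin{equation*}
    u(t) - \Pi(\gamma_t)u^*
    = \Pi(e^{tX})\bra[\big]{u^* + \pi(e^{tL}\cP v_0)u^* - \Pi(\exp(e^{tL}\cP v_0))u^*} + S(t,0)P_s(0)v_0 + f(t).
\end{equation*}

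Next I would apply the Taylor-type estimate \cref{eq:taylorestimate} with $Y = e^{tL}\cP v_0 \in \alg$, which bounds the bracketed expression in $\cX$-norm by $C\norm{e^{tL}\cP v_0}_\alg^2 \leq C K_T^2\norm{v_0}_\cX^2$. Combining this with the uniform bound $\norm{\Pi(e^{tX})}_{\cL(\cX)} \leq M_1$ from \cref{eq:SttestimatePi} controls the first term by $CM_1 K_T^2\norm{v_0}_\cX^2$.

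For the remaining two terms, the exponential stability estimate \cref{eq:SttestimateSttPs} yields $\norm{S(t,0)P_s(0)v_0}_\cX \leq M_3 e^{-at}\norm{v_0}_\cX$, and the $f(t)$ term contributes $\norm{f(t)}_\cX$ directly. Summing the three bounds produces exactly the stated inequality. There is no real analytic obstacle here; the only conceptual point to get right is the algebraic identity that matches the Taylor remainder of $\Pi(\exp(\cdot))u^*$ at the algebra element $e^{tL}\cP v_0$ with the output of $S(t,0)P_c(0)$ via \cref{eq:SttPc}, which is precisely what the predicted phase \cref{eq:predictedphase} was designed to accomplish.
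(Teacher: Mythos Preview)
Your proof is correct and follows essentially the same approach as the paper: decompose $S(t,0)v_0$ via $\eye = P_c(0) + P_s(0)$, use \cref{eq:SttPc} to identify the center part with $\Pi(e^{tX})\pi(e^{tL}\cP v_0)u^*$, then apply the Taylor estimate \cref{eq:taylorestimate} together with \cref{eq:SttestimatePi} for the quadratic term and \cref{eq:SttestimateSttPs} for the stable term. The only cosmetic difference is that the paper groups the terms via a triangle inequality into two pieces before estimating, whereas you write out the full identity for $u(t)-\Pi(\gamma_t)u^*$ as a sum of three terms and bound each directly; the content is identical.
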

\begin{proof}
    For brevity, we write $Y_t = e^{tL}\cP v_0$.
    By the decomposition $\eye = P_c(0) + P_s(0)$ and \cref{eq:SttPc}, we have
    \begin{align*}
        S(t,0)v_0 &= S(t,0)P_c(0) v_0 + S(t,0)P_s(0) v_0 \\
        &= \Pi(e^{tX})\pi(Y_t)u^* + S(t,0)P_s(0)v_0
    \end{align*}
    for $t \in [0,T]$.
    Hence, by the triangle inequality, we get
    \begin{align*}
        \norm{u(t) - \Pi(\gamma_t) u^*}_\cX &\leq \norm{S(t,0)P_s(0)v_0 + f(t)}_\cX \\
        &\quad+ \norm{\Pi(\gamma_t)u^* - \hat{u}(t) - \Pi(e^{tX})\pi(Y_t)u^*}_\cX,
    \end{align*}
    so it suffices to estimate these two terms.
    For the first term, we observe using \cref{eq:SttestimateSttPs}:
    \begin{equation*}
        \norm{S(t,0)P_s(0)v_0 + f(t)}_\cX \leq M_3 e^{-at}\norm{v_0}_\cX + \norm{f(t)}_\cX.
    \end{equation*}
    For the second term, we use \cref{eq:predictedphase} to see $\gamma_t = e^{tX}e^{Y_t}$, which together with \cref{eq:taylorestimate,eq:SttestimatePi} gives:
    \begin{align*}
        \norm{\Pi(\gamma_t)&u^* - \hat{u}(t) - \Pi(e^{tX})\pi(Y_t)u^*}_\cX
        \leq M_1\norm{\Pi(e^{Y_t})u^* - u^* - \pi(Y_t)u^*}_\cX \\
        &\leq C M_1\norm{Y_t}^2_\alg \leq C M_1 \norm{e^{tL}\cP}_{\cL(\cX;\alg)}^2 \norm{v_0}^2_\cX. \qedhere
    \end{align*}
\end{proof}

Before we state the nonlinear stability result, we need to assume some regularity on the nonlinearity $F$.
Recall that we have previously assumed that $F$ is Fr\'echet differentiable at $u^*$ (see \cref{ass:semigroup}).
\begin{assumption}[Regularity of $F$ near $u^*$]
    \label{ass:Fregular}
    For every $R > 0$, there exists a constant $C$ such that the estimate
        \begin{equation}
            \label{eq:Festimates}
            \norm{F(v) - F(u^*) - F'(u^*)[v - u^*]}_\cX \leq C \norm{v - u^*}_\cX^2
        \end{equation}
        holds for all $v \in \cX$ satisfying $\norm{v - u^*} \leq R$.
\end{assumption}
\begin{remark}
    From \cref{ass:sympde}, it follows that we can replace $u^*$ in \cref{eq:Festimates} by $\Pi(g)u^*$ for any $g \in \grp$.
    In particular, \cref{eq:Festimates} holds with $u^*$ replaced by $\hat{u}(t)$ for any $t \geq 0$.
\end{remark}

We can now state the deterministic orbital stability result.
\cref{thm:detstab} confirms the heuristic discussion in \cref{subsec:orbstab}, and rigorously shows that \cref{eq:predictedphase} gives an accurate prediction of the phase.
The proof of \cref{thm:detstab} is contained in \cref{subsec:proofdetstab}.
\begin{theorem}[Orbital stability]
    \label{thm:detstab}
    Let $u(t)$ be a solution to \cref{eq:milddet} with initial condition $u_0 = u^* + v_0$, and let $\gamma_t$ be given by \cref{eq:predictedphase}.
    For every $T > 0$, $\delta > 0$, there exists a constant $\eps > 0$ such that we have the estimate
    \begin{equation}
        \label{eq:detorbstab}
        \norm{u(t) - \Pi(\gamma_t)u^*}_\cX \leq (M_3 e^{-at} + \delta)\norm{v_0}_\cX, \quad t \in [0,T],
    \end{equation}
    whenever $\norm{v_0}_\cX \leq \eps$ ($M_3$ and $a$ are the constants from \cref{eq:SttestimateSttPs}).
\end{theorem}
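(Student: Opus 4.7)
\textbf{Proof proposal for \cref{thm:detstab}.}
The plan is to apply \cref{lem:nonlinstep1} with $f(t) \coloneq u(t) - \hat{u}(t) - S(t,0)v_0$, and to show that $\|f(t)\|_\cX$ is $\cO(\|v_0\|_\cX^2)$ uniformly on $[0,T]$. Since $\hat{u}(t)$ is itself a mild solution of \cref{eq:milddet} with initial condition $u^*$, subtracting the mild formulae for $u(t)$ and $\hat{u}(t)$ gives
\begin{equation*}
    f(t) = \int_0^t S(t,t')\bigl[F(u(t')) - F(\hat{u}(t')) - F'(\hat{u}(t'))(u(t')-\hat{u}(t'))\bigr]\rd t'.
\end{equation*}
By the remark following \cref{ass:Fregular}, combined with \cref{eq:SttestimateStt}, this yields
\begin{equation}
\label{eq:fintegral}
    \norm{f(t)}_\cX \leq C M_2 \int_0^t \norm{u(t')-\hat{u}(t')}_\cX^2 \rd t',
\end{equation}
provided $u(t')$ stays within a fixed neighborhood $\norm{u(t')-\hat{u}(t')}_\cX \leq R$ of the orbit.

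Next I would run a bootstrap argument to control $g(t) \coloneq \norm{u(t)-\hat{u}(t)}_\cX$ on $[0,T]$. Combining $g(t) \leq M_2 \norm{v_0}_\cX + \norm{f(t)}_\cX$ with \cref{eq:fintegral} gives the quadratic Gronwall-type inequality
\begin{equation*}
    g(t) \leq M_2 \norm{v_0}_\cX + C M_2 \int_0^t g(t')^2 \rd t'.
\end{equation*}
Assuming continuity of $g$ and choosing $\eps$ small enough (depending on $T, M_2, C$), a standard continuity argument — take the maximal $T^* \leq T$ on which $g(t) \leq 2M_2 \norm{v_0}_\cX$, substitute back, and verify that the bound is not saturated — shows $g(t) \leq 2M_2\norm{v_0}_\cX$ throughout $[0,T]$ for all $\norm{v_0}_\cX \leq \eps$. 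In particular one may require $\eps$ small enough that $2M_2 \eps \leq R$, so that \cref{ass:Fregular} remains applicable. Feeding this back into \cref{eq:fintegral} produces the desired bound
\begin{equation*}
    \norm{f(t)}_\cX \leq 4 C M_2^3 T \norm{v_0}_\cX^2, \quad t\in[0,T].
\end{equation*}

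Finally, \cref{lem:nonlinstep1} applied with this $f$ gives
\begin{equation*}
    \norm{u(t)-\Pi(\gamma_t)u^*}_\cX \leq M_3 e^{-at}\norm{v_0}_\cX + \bigl(CM_1 K_T^2 + 4CM_2^3 T\bigr)\norm{v_0}_\cX^2,
\end{equation*}
and shrinking $\eps$ further so that $(CM_1 K_T^2 + 4CM_2^3 T)\eps \leq \delta$ closes the proof. The main obstacle is the bootstrap step, where one must ensure that the quadratic self-interaction of the nonlinear correction does not amplify before $\|v_0\|_\cX$ is driven sufficiently small; this is handled purely by the finiteness of $T$ and the uniform bound $M_2$ on the evolution family, which is why the resulting $\eps$ depends on $T$.
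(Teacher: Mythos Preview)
Your proposal is correct and follows essentially the same route as the paper's proof: both subtract the mild formulations for $u$ and $\hat u$ to obtain the nonlinear remainder $f(t)=u(t)-\hat u(t)-S(t,0)v_0$, run a continuity (bootstrap) argument to show $\norm{u(t)-\hat u(t)}_\cX \le 2M_2\norm{v_0}_\cX$ on $[0,T]$, feed this back to get the quadratic bound on $f$, and then invoke \cref{lem:nonlinstep1} before shrinking $\eps$ so that the quadratic terms are absorbed into $\delta\norm{v_0}_\cX$. The only differences are cosmetic (the paper names $v(t)=u(t)-\hat u(t)$ and uses a $3M_2$ threshold in the bootstrap rather than $2M_2$).
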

Interestingly, the estimate \cref{eq:detorbstab} shows that the solution gets closer to the center manifold only for $t \geq a^{-1}\log(M_3)$.
If $S(t,t')P_s(t')$ is not immediately contractive, the solution might move away from the center manifold between $t=0$ and $t=a^{-1}\log(M_3)$.

Using the symmetries of \cref{eq:pde} encoded in \cref{ass:sympde}, it is a corollary of \cref{thm:detstab} that the center manifold is exponentially attracting for the deterministic equation.
We elaborate on this point in the next section (see \cref{thm:stochstablong}), where we treat stochastic perturbations.

\subsection{Stochastic perturbations}
\label{subsec:stochpert}
In this section, we use the predicted phase \cref{eq:predictedphase} to show stability of stochastic perturbations of \cref{eq:pde} on long timescales.
We believe the conciseness of the proof demonstrates the strength and elegance of the phase prediction function.

For $\sigma > 0$, we consider the following SPDE:
\begin{equation}
\begin{aligned}
    \label{eq:spde}
    \rd u(t) &= [A u(t) + F(u(t)) + \sigma^2 H(t,u(t))] \rd t + \sigma G(t,u(t)) \rd W(t), \\
        u(0) &= u^* + v_0,
\end{aligned}
\end{equation}
where $A$, $F$ are as before, $\cH$ is a separable Hilbert space, $W(t)$ is an $\cH$-cylindrical Wiener process, $H$ takes values in $\cX$, and $v_0$ is an $\cF_0$-measurable, $\cX$-valued random variable.
Since we are working in an abstract Banach space, we let $G$ take values in the space of $\gamma$-radonifying operators from $\cH$ to $\cX$, denoted $\gamma(\cH;\cX)$ (see \cref{subsec:notation}).
For more details and abstract results about such spaces, we refer to \cite[Chapter 9]{hytonen_analysis_2016}.
In \cref{subsec:fhn}, we show in a concrete example what kind of noise is allowed under this condition.
We allow $G$ and $H$ to depend on $\omega$, but generally suppress this dependence in the notation.
As discussed in \cref{subsec:tail}, we assume from now on that $\cX$ is $2$-smooth.

The term $\sigma G \rd W$ models noise present in the system, the amplitude of which is controlled by $\sigma$.
We interpret \cref{eq:spde} in the It\^o sense, and include the drift term $\sigma^2 H\rd t$ in \cref{eq:spde} to account for a possible Stratonovich correction.
However, we do not impose any relation between $G$ and $H$.
Hence, this setup allows for arbitrary deterministic perturbations which are of second order in $\sigma$.

We require some regularity assumptions on $G$ and $H$ for \cref{eq:spde} to be well-posed.
Since the aim of this paper is not to treat well-posedness of stochastic PDEs, we make do with the following relatively simple local Lipschitz assumptions.
For any concrete equation, more appropriate function spaces and nonlinear estimates can be formulated.
As an example of such a treatment, we refer to \cite{gnann_solitary_2024}, where stability of a solitary wave in a modified nonlinear Schr\"odinger equation is shown using dispersive estimates.
\begin{assumption}[Regularity of $G$ and $H$]
\label{ass:GHregular}
    For $f \in \cX$, the processes $(\omega,t) \mapsto G(\omega,t,f)$ and $(\omega,t) \mapsto H(\omega,t,f)$ are progressively measurable.
    For every $R > 0$, there exist a constant $C$ such that the estimates
    \begin{subequations}
    \begin{align}
        \norm{H(t,f)}_\cX &\leq C, \\
        \label{eq:Gbounded}
        \norm{G(t,f)}_{\gamma(\cH;\cX)} &\leq C, \\
        \norm{H(t,f) - H(t,g)}_\cX &\leq C \norm{f - g}_\cX, \\     
        \label{eq:Glipschitz}   
        \norm{G(t,f) - G(t,g)}_{\gamma(\cH;\cX)} &\leq C \norm{f - g}_\cX.
    \end{align}
    \end{subequations}
    are valid for all $\omega \in \Omega$, $t \in [0,\infty)$ and $f,g \in \cX$ which satisfy $\max\cur{\norm{f}_\cX, \norm{g}_\cX} \leq R$.
\end{assumption}

It is well-known that stochastic evolution equations with globally Lipschitz coefficients and linear growth have unique local solutions.
The following basic well-posedness result can be obtained from \cite[Theorem 7.2]{daprato_stochastic_1992a} by a straightforward localization argument.
\begin{theorem}[Local well-posedness]
    \label{thm:localwellposed}
    There exist unique (up to indistinguishability) random variables $(\tau^*,\cur{u(t)}_{t \in [0,\tau^*)})$ such that:
    \begin{itemize}
        \item $\tau^*$ is a stopping time.
        \item $u$ is adapted and continuous on $[0,\tau^*)$, with values in $\cX$.
        \item For all $t \in [0,\tau^*)$, $u(t)$ solves the following mild formulation to \cref{eq:spde}:
        \begin{equation}
        \label{eq:spdemild}
        \begin{aligned}
            u(t) &= S(t,0)(u^* + v_0) + \int_0^t S(t,t')\sigma G(t',u(t')) \rd W(t') \\
            &\quad+ \int_0^t S(t,t')\bra[\big]{F(u(t')) - F'(\hat{u}(t'))u(t') + \sigma^2H(t',u(t')) } \rd t'.
        \end{aligned}
        \end{equation}
        \item If $\tau^* < \infty$, then $\lim_{t \nearrow \tau^*} \norm{u(t)}_\cX = \infty$.
    \end{itemize}
\end{theorem}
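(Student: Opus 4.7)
The plan is a standard truncation/localization argument. Since \cref{ass:Fregular,ass:GHregular} yield only local Lipschitz bounds, whereas the classical well-posedness theorem of Da Prato and Zabczyk requires global Lipschitz bounds with linear growth, I would first pass to globally Lipschitz truncations of the nonlinear terms. For each integer $R \geq 1$, choose a Lipschitz cutoff $\chi_R \colon \cX \to [0,1]$ with $\chi_R \equiv 1$ on $\cur{\phi \in \cX : \norm{\phi}_\cX \leq R}$ and $\chi_R \equiv 0$ on $\cur{\phi \in \cX : \norm{\phi}_\cX \geq 2R}$, for instance $\chi_R(\phi) \coloneq \max(0, \min(1, 2 - R^{-1}\norm{\phi}_\cX))$. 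Define the truncated drift $\tilde F_R(t, \phi) \coloneq \chi_R(\phi)[F(\phi) - F'(\hat u(t))\phi + \sigma^2 H(t, \phi)]$ and diffusion $G_R(t, \phi) \coloneq \chi_R(\phi)\sigma G(t, \phi)$. By \cref{ass:Fregular,ass:GHregular}, both $\tilde F_R$ and $G_R$ are globally bounded and globally Lipschitz in $\phi$ (with constants depending on $R$), and they remain progressively measurable in $(\omega, t)$.

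Next, invoking \cite[Theorem 7.2]{daprato_stochastic_1992a} (or equivalently, a Banach-valued Picard iteration in a space of continuous adapted $\cX$-valued processes on compact intervals with suitably weighted $L^p$ norms), I would obtain for each $R$ a unique global mild solution $u_R \in C([0, \infty); \cX)$ almost surely to the $R$-truncated mild equation. The $C_0$-evolution family $S(t, t')$ from \cref{prop:stationarysummary} serves as the linear propagator, and the stochastic convolution is controlled by the maximal inequality underlying \cref{prop:stochconvtail} in the 2-smooth space $\cX$. Define $\tau_R \coloneq \inf\cur{t \geq 0 : \norm{u_R(t)}_\cX \geq R}$, which is a stopping time by path continuity of $u_R$. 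Since the truncation is inactive on the set $\cur{\phi \in \cX : \norm{\phi}_\cX \leq R}$, $u_R$ solves the original mild equation \cref{eq:spdemild} on $[0, \tau_R)$, and pathwise uniqueness at level $R' \geq R$ forces $u_R = u_{R'}$ on $[0, \tau_R \wedge \tau_{R'})$; in particular $R \mapsto \tau_R$ is nondecreasing. Setting $\tau^* \coloneq \sup_R \tau_R$ and $u(t) \coloneq u_R(t)$ for $t \in [0, \tau_R)$ then yields a well-defined, adapted, continuous process on $[0, \tau^*)$ satisfying \cref{eq:spdemild}.

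For the blow-up alternative, suppose $\tau^*(\omega) < \infty$ but $\sup_{t \in [0, \tau^*(\omega))} \norm{u(t, \omega)}_\cX \leq R_0 - 1$ for some $R_0 \in \bbN$. Then $\norm{u_{R_0}(t,\omega)}_\cX = \norm{u(t,\omega)}_\cX \leq R_0 - 1$ for all $t < \tau^*(\omega)$, forcing $\tau_{R_0}(\omega) \geq \tau^*(\omega)$; iterating with $R > R_0$ gives $\sup_R \tau_R(\omega) = \infty$, contradicting $\tau^*(\omega) < \infty$. A standard continuity argument (excluding oscillation between bounded and unbounded values via the exit-time definition of $\tau_R$) then upgrades this to the pointwise limit stated in the theorem. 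Uniqueness of $(\tau^*, u)$ up to indistinguishability is inherited from uniqueness of each $u_R$. The main technical subtlety compared to the Hilbert-space setting of \cite{daprato_stochastic_1992a} is that the stochastic integral must be constructed in the Banach space $\cX$; this is precisely where 2-smoothness of $\cX$ together with the $\gamma$-radonifying range of $G$ enter, providing the $L^p$-control of the stochastic convolution needed to run the Picard iteration in the first place.
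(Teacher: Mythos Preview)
Your proposal is correct and is precisely the ``straightforward localization argument'' the paper invokes in lieu of a proof: the paper does not give its own argument but simply points to \cite[Theorem 7.2]{daprato_stochastic_1992a} together with truncation, which is exactly the scheme you spell out. Your additional remark that the Da Prato--Zabczyk fixed-point step must be run in a $2$-smooth Banach space using $\gamma$-radonifying integrands (rather than the Hilbert-space setting of the cited reference) is a genuine point the paper glosses over, and it is good that you flag it.
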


From now on, we will write $u$ and $\tau^*$ for the solution and stopping time obtained from \cref{thm:localwellposed}.
Since the solution to \cref{eq:spde} with $\sigma = 0$ and $v_0 \equiv 0$ is given by $u(t) \equiv \hat{u}(t)$, we propose the following asymptotic expansion for $u$:
\begin{equation*}
    u(t) = \hat{u}(t) + v(t) + z(t),
\end{equation*}
where $v$ is $\cO(\sigma)$ and $z$ is $\cO(\sigma^2)$.
Substituting this ansatz into \cref{eq:spde} and grouping the terms based on their order in $\sigma$, we get
\begin{align*}
    \label{eq:lin}
    \rd u(t) &= \bra[\big]{A \hat{u}(t) + F(\hat{u}(t))} \rd t \\
    &\quad+  \bra[\big]{[A v(t) + F'(\hat{u}(t))v(t)] \rd t + \sigma G(t,\hat{u}(t)) \rd W(t)} \\
    &\qquad+ \bra[\big]{A z(t) + F'(\hat{u}(t))z(t)} \rd t \\
    &\qquad+ \bra[\big]{F(u(t)) - F(\hat{u}(t)) - F'(\hat{u})[u(t) - \hat{u}(t)]}\rd t \\
    &\qquad+ \sigma^2 H(t,u(t)) \rd t \\
    &\qquad+ \sigma \bra[\big]{G(t,u(t)) - G(t,\hat{u}(t))} \rd W(t).
\end{align*}
Note that $\rd \hat{u}(t) = \bra[\big]{A \hat{u}(t) + F(\hat{u}(t))}\rd t$ is satisfied, and $\hat{u}(0) = u^*$.
Hence, for $v$ we obtain the equation
\begin{subequations}
\label{eq:dvdz}
\begin{equation}
\begin{aligned}
    \label{eq:dv}
    \rd  v(t) &= [A + F'(\hat{u}(t))] v(t)\rd t + \sigma G(t,\hat{u}(t)) \rd W(t), \\
    v(0) &= v_0,
\end{aligned}
\end{equation}
and for $z$ we get
\begin{equation}
\label{eq:dz}
\begin{aligned}
    \rd z(t) &= [A + F'(\hat{u}(t))] z(t)\rd t + \bra[\big]{F(u(t)) - F(\hat{u}(t)) - F'(\hat{u}(t))[u(t) - \hat{u}(t)]}\rd t \\
    &\quad+ \sigma^2 H(t,u(t)) \rd t + \sigma\bra[\big]{G(t,u(t)) - G(t,\hat{u}(t))} \rd W(t), \\
    z(0) &= 0.
 \end{aligned}
\end{equation}
\end{subequations}
Since we already know from \cref{thm:localwellposed} that there is a unique solution to \cref{eq:spde}, it is not difficult to obtain solutions for \cref{eq:dv,eq:dz}.
From \cref{eq:dv}, we should expect to obtain a solution formula for $v$ which does not involve $u$ or $z$.
This is indeed possible.

\begin{proposition}[Asymptotic expansion]
    \label{prop:asym}
    Equation \cref{eq:dv} has a unique mild solution, explicitly given by
    \begin{equation}
        \label{eq:defv}
        v(t) = S(t,0)v_0 + \sigma \int_0^t S(t,t')G(t',\hat{u}(t'))\rd W(t'), \qquad t \in [0,\infty).
    \end{equation}
    With this solution, we have the following first-order asymptotic expansion:
    \begin{equation}
        \label{eq:expansion}
        u(t) = \hat{u}(t) + v(t) + z(t), \qquad t \in [0,\tau^*),
    \end{equation}
    where $z$ (as defined by the right-hand side of \cref{eq:expansion}) is the mild solution to \cref{eq:dz}.
\end{proposition}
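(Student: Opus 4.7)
The plan is to first establish well-posedness of $v$ via the explicit formula \cref{eq:defv}, and then verify the expansion \cref{eq:expansion} by direct substitution into the relevant mild formulations. For well-posedness, observe that $\hat{u}(t) = \Pi(e^{tX})u^*$ satisfies $\norm{\hat{u}(t)}_\cX \leq M_1 \norm{u^*}_\cX$ by \cref{eq:SttestimatePi}. Applying \cref{ass:GHregular} with $R = M_1\norm{u^*}_\cX$ then yields that $t \mapsto G(t,\hat{u}(t))$ is progressively measurable and uniformly bounded in $\gamma(\cH;\cX)$. Since $\cX$ is $2$-smooth, the stochastic convolution in \cref{eq:defv} is well-defined by the theory recalled in \cref{subsec:tail}, giving a continuous adapted $\cX$-valued process. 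This process is, by definition of a mild solution relative to the evolution family $\cur{S(t,t')}_{0 \leq t' \leq t}$, a mild solution to \cref{eq:dv}; uniqueness follows from linearity, since the difference of two mild solutions satisfies a deterministic homogeneous mild equation with zero initial data, whose only solution is zero.

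For the expansion, I would set $z(t) \coloneq u(t) - \hat{u}(t) - v(t)$ on $[0,\tau^*)$ and check that $z$ satisfies the mild form of \cref{eq:dz}. The mild identities for $u$ and $v$ are \cref{eq:spdemild,eq:defv} respectively; the missing ingredient is an analogous mild identity for $\hat{u}$ with respect to $S(t,t')$, namely
\begin{equation*}
    \hat{u}(t) = S(t,0)u^* + \int_0^t S(t,t')\bra[\big]{F(\hat{u}(t')) - F'(\hat{u}(t'))\hat{u}(t')}\rd t'.
\end{equation*}
Passing to the comoving frame via \cref{eq:defStt}, this reduces to verifying $u^* = S^*(t)u^* + \int_0^t S^*(t-t')[F(u^*) - F'(u^*)u^*]\rd t'$. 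Since $\hat{u}$ is a strong solution to \cref{eq:pde}, the comoving equation \cref{eq:pdecomoving} gives $Au^* - \pi(X)u^* + F(u^*) = 0$, so $\cL^*u^* = F'(u^*)u^* - F(u^*)$; both sides of the identity then have matching derivatives and agree at $t=0$. Subtracting the three mild identities, the initial-data terms $S(t,0)(u^*+v_0) - S(t,0)u^* - S(t,0)v_0$ cancel, the deterministic integrals combine into
\begin{equation*}
    \int_0^t S(t,t')\bra[\big]{F(u(t')) - F(\hat{u}(t')) - F'(\hat{u}(t'))[u(t') - \hat{u}(t')] + \sigma^2 H(t',u(t'))}\rd t',
\end{equation*}
and the stochastic integrals combine into $\sigma \int_0^t S(t,t')[G(t',u(t')) - G(t',\hat{u}(t'))]\rd W(t')$. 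This is exactly the mild form of \cref{eq:dz} with $z(0) = 0$.

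The main subtlety lies in verifying the mild identity for $\hat{u}$, since $S(t,t')$ is generated by the full linearization $\cL^* = A - \pi(X) + F'(u^*)$ rather than by $A$ alone, and only a strong solution hypothesis is available from \cref{ass:selfsimilarsolution}. The key observation making this tractable is that the stationarity of $u^*$ in the comoving frame determines $\cL^*u^* = F'(u^*)u^* - F(u^*)$ explicitly, so no information beyond existence of $S^*(t)$ (from \cref{ass:semigroup}) is required. Once this identity is in hand, the remainder is a bookkeeping exercise in the linear (mild-formulation) calculus.
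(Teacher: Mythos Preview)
Your proposal is correct and follows essentially the same approach as the paper: the paper simply says to substitute \cref{eq:expansion} into \cref{eq:spdemild} and rearrange as in the derivation of \cref{eq:dvdz}, without writing out the details. You have supplied those details, and in particular you correctly isolate the one nontrivial ingredient the paper leaves implicit---the mild identity for $\hat{u}$ with respect to $S(t,t')$---and verify it cleanly via $\cL^* u^* = F'(u^*)u^* - F(u^*)$.
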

The proof consists of substituting \cref{eq:expansion} into \cref{eq:spdemild}, and rearranging the terms in the same way that \cref{eq:dvdz} was derived.
Since this amounts to repeating the calculation we just performed, we do not write this out.

With the asymptotic expansion established, we can formulate the first stochastic stability result.
\cref{thm:stochstabshort} can be seen as a stochastic generalization of \cref{thm:detstab}.
The proof is contained in \cref{subsec:proofstochstabshort}.

\begin{theorem}[Short-term exponential stability]
    \label{thm:stochstabshort}
    Let $\gamma_t$ be given by \cref{eq:predictedphase}.
    For every $T > 0$, $\alpha > 1$, there exist constants $c,\eps'$ such that we have the estimate
    \begin{equation}
        \label{eq:stochstabshort}
        \PP[\Big]{\sup_{t \in [0,T \wedge \tau^*)} \norm{u(t) - \Pi(\gamma_t)u^*}_\cX - M_3\alpha e^{-at} \eps \geq \eps,\,  \norm{v_0}_\cX \leq \alpha \eps} \leq 6\exp(-c \eps^2 \sigma^{-2}),
    \end{equation}
    for all $\eps,\sigma$ satisfying $0 < \sigma \leq \eps \leq \eps'$, and any initial condition $v_0$ ($M_3$, $a$ are the constants from \cref{eq:SttestimateSttPs}).
\end{theorem}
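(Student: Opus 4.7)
The plan is to combine the asymptotic expansion of \cref{prop:asym} with \cref{lem:nonlinstep1} and the subgaussian tail bound \cref{prop:stochconvtail}. Writing $u(t) = \hat{u}(t) + S(t,0)v_0 + f(t)$ with $f(t) \coloneq \sigma N(t) + z(t)$, where $N(t) \coloneq \int_0^t S(t,t')G(t',\hat{u}(t'))\rd W(t')$ and $z$ solves \cref{eq:dz}, \cref{lem:nonlinstep1} yields the pointwise estimate
\[
    \norm{u(t)-\Pi(\gamma_t)u^*}_\cX \leq M_3 e^{-at}\norm{v_0}_\cX + CM_1 K_T^2 \norm{v_0}_\cX^2 + \norm{f(t)}_\cX.
\]
On the set $\cur{\norm{v_0}_\cX\leq\alpha\eps}$ the first summand contributes exactly the required $M_3\alpha e^{-at}\eps$ term, while the quadratic piece is at most $CM_1 K_T^2 \alpha^2 \eps^2$, which can be made smaller than any prescribed fraction of $\eps$ by shrinking $\eps'$. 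The proof thus reduces to showing $\sup_{t \in [0,T\wedge\tau^*)}\norm{f(t)}_\cX$ is bounded by a suitable fraction of $\eps$ on an event of probability at least $1-6\exp(-c\eps^2\sigma^{-2})$.

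Because $\hat{u}(t)=\Pi(e^{tX})u^*$ lies on a bounded orbit, \cref{eq:Gbounded} gives $\norm{G(t,\hat{u}(t))}_{\gamma(\cH;\cX)}\leq C_G$ uniformly, so \cref{prop:stochconvtail} applied to $\sigma N$ directly yields a subgaussian tail of the form $\PP{\sup_{t \in [0,T]}\norm{\sigma N(t)}_\cX \geq \lambda} \leq e\exp(-c_1 \lambda^2 \sigma^{-2})$ for a suitable $c_1=c_1(T,C_G)$. The delicate piece is the nonlinear remainder $z$, for which the local nature of \cref{ass:Fregular,ass:GHregular} and the $u$-dependence of the driving terms in \cref{eq:dz} force a bootstrap. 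I would introduce the stopping time
\[
    \tau_\rho \coloneq \inf\cur{t \in [0,T\wedge\tau^*) : \norm{v(t)+z(t)}_\cX > \rho} \wedge T \wedge \tau^*
\]
with $\rho \coloneq 1$, so that the local constants $C=C(\rho)$ from \cref{ass:Fregular,ass:GHregular} apply uniformly to $u-\hat{u}=v+z$ on $[0,\tau_\rho)$. A second application of \cref{prop:stochconvtail} to the stopped convolution
\[
    \sigma\tilde{N}(t) \coloneq \sigma\int_0^{t\wedge\tau_\rho} S(t,t')\bra[\big]{G(t',u(t'))-G(t',\hat{u}(t'))}\rd W(t'),
\]
whose integrand is $\gamma$-norm bounded by $\sigma C\rho$ via \cref{eq:Glipschitz}, produces a subgaussian tail $\PP{\sup_{t \in [0,T]}\norm{\sigma\tilde{N}(t)}_\cX \geq \lambda} \leq e\exp(-c_2 \lambda^2\sigma^{-2})$.

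On the intersection of these two good noise events with $\cur{\norm{v_0}_\cX\leq\alpha\eps}$, the mild formulation of \cref{eq:dz}, combined with \cref{eq:Festimates}, the uniform bound \cref{eq:SttestimateStt}, the inequality $\norm{u-\hat{u}}^2 \leq 2\norm{v}^2 + 2\norm{z}^2$, and $\norm{z}_\cX \leq \rho + \norm{v}_\cX$ on $[0,\tau_\rho)$, yields a linear Gronwall inequality of the form $\norm{z(t)}_\cX \leq C_*(\eps^2 + \sigma^2 + \lambda)e^{C_*T}$ on $[0,\tau_\rho)$. For $\sigma\leq\eps\leq\eps'$ sufficiently small, this forces $\norm{z(t)}_\cX$ below the preset fraction of $\eps$ and hence $\norm{v(t)+z(t)}_\cX < \rho$ throughout, closing the bootstrap so that $\tau_\rho = T\wedge\tau^*$ and $\norm{f(t)}_\cX$ is below the desired fraction of $\eps$. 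A union bound over the two noise events then delivers a total failure probability at most $2e\exp(-c\eps^2\sigma^{-2})\leq 6\exp(-c\eps^2\sigma^{-2})$ with $c \coloneq \min(c_1,c_2)$, which is \cref{eq:stochstabshort}.

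The main obstacle I expect is the self-consistency of the bootstrap: $\tau_\rho$ is defined in terms of $z$, which depends on $\tilde{N}$, which itself is stopped at $\tau_\rho$. The standard resolution is a closed-loop argument showing that on the two good noise events $\tau_\rho$ cannot be hit, so the stopped and unstopped integrals agree and the Gronwall estimate applies to the true $z$. Orchestrating the various fractions of $\eps$ (split between the Taylor quadratic term, the two stochastic convolutions, and the Gronwall bound on $z$), the radius $\rho$, and the threshold $\eps'$ so that the Gronwall factor $e^{C_*T}$ does not destroy the closure is the principal technical care point; everything else is a direct application of the linear-stability machinery of \cref{sec:linsym} and the tail estimate \cref{prop:stochconvtail}.
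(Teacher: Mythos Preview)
Your proposal is correct and follows the same architecture as the paper: apply \cref{lem:nonlinstep1} to the expansion of \cref{prop:asym}, control the linear stochastic convolution $\sigma N$ via \cref{prop:stochconvtail}, and run a bootstrap on $z$ using the mild form of \cref{eq:dz}. The difference lies in how the bootstrap on $z$ is organized.

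The paper does not use Gronwall. Instead of a fixed localization radius $\rho=1$, it introduces two $\eps$-scaled stopping times, $\tau_v$ (with threshold $(\alpha M_2+\tfrac14)\eps$ on $v$) and $\tau_z$ (with threshold $\tfrac12\eps$ on $z$). On $A_\eps\cap B_{\eps,\sigma}$ one has $\tau_v=T$, and on $[0,\tau_v\wedge\tau_z]$ the bound $\norm{u-\hat u}_\cX\leq C_2\eps$ turns the $F$-remainder and $H$-drift directly into $O(\eps^2)$ and $O(\sigma^2)$ contributions, with no integral inequality needed. More notably, the $\eps$-scale stopping makes the integrand of the multiplicative correction $T_2$ bounded by $C\sigma\eps$ rather than $C\sigma\rho$, so \cref{prop:stochconvtail} with threshold $\tfrac14\eps$ yields a tail $\exp(-c\sigma^{-2})$ that is \emph{independent of $\eps$}; the overall $\eps^2$ in \cref{eq:stochstabshort} then comes solely from the tail of $B_{\eps,\sigma}$.

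Your fixed-$\rho$ Gronwall route also closes, but the Gronwall factor $e^{C_*T}$ feeds into the choice of $\lambda$ and hence into the constant $c$, and your tail for $\sigma\tilde N$ only gives $\exp(-c\eps^2\sigma^{-2})$ rather than $\exp(-c\sigma^{-2})$. Neither affects the stated theorem, but the paper's $\eps$-scaled stopping is cleaner and yields the sharper intermediate bound of \cref{lem:stab2}. One small point to tidy in your argument: you should note (as the paper does) that $\tau_\rho<\tau^*$ by the blow-up alternative, so that $z(\tau_\rho)$ is well-defined and the continuity argument closing the bootstrap is valid.
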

Theorem \ref{thm:stochstabshort} does not directly imply stability on long timescales.
The reason is that $\eps'$ and $c$ both depend on $T$, and from the proof it can be seen that $\eps' \sim T^{-1}$.
Thus, a direct application of the theorem can only give stability on a timescale $T \sim \sigma^{-1}$.
However, if we choose $\alpha = 2$ and $T  = a^{-1}\log(M_3 \alpha)$, then \cref{eq:stochstabshort} implies
\begin{equation*}
    \PP[\big]{ \norm{u(T) - \Pi(\gamma_T)u^*}_\cX \geq 2 \eps,\,  \norm{v_0}_\cX \leq 2 \eps} \leq 6\exp(-c \eps^2 \sigma^{-2}).
\end{equation*}
Hence, if the solution at time $t = 0$ is close to the center manifold, then the solution at time $t = T$ will be equally close (with high probability).
Using the symmetries of the equation, it is possible to show a similar estimate on the interval $[T,2T]$, and so on.
Combining these estimates then results in the following long-time stability result.
For simplicity, we use the initial condition $v_0 \equiv 0$, but a similar result holds when $\norm{v_0} \leq \eps$.
The proof of \cref{thm:stochstablong} is contained in \cref{subsec:proofstochstablong}.

\begin{theorem}[Long-term stability]
    \label{thm:stochstablong}
    Set $v_0 \equiv 0$. 
    There exist constants $c,\eps'> 0$ such that we have the estimate
    \begin{equation}
        \label{eq:stochstablong}
        \PP[\Big]{ \sup_{t \in [0,T \wedge \tau^*)} \inf_{\gamma \in \grp} \norm{u(t) - \Pi(\gamma)u^*}_\cX \geq \eps}
        \leq 12\, T \exp(-c \eps^2 \sigma^{-2}),
    \end{equation}
    for all $T > 0$ and all $\eps,\sigma$ satisfying $0 < \sigma \leq \eps \leq \eps'$.
\end{theorem}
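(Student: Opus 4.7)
The plan is to iterate Theorem \ref{thm:stochstabshort} over a partition of $[0,T]$ into subintervals of fixed length $T_0$, using the symmetry assumption \ref{ass:sympde} to ``reset'' the problem at the start of each subinterval. Choose $\alpha > 1$ and $T_0 > 0$ such that $M_1(M_3\alpha e^{-aT_0} + 1) \leq \alpha$ — for instance $\alpha = 2M_1$ and $T_0 = a^{-1}\log(2M_1 M_3) \vee 1$. Set $C := M_1(M_3\alpha + 1)$ and apply Theorem \ref{thm:stochstabshort} with parameters $(T_0, \alpha)$ but with $\eps$ replaced by $\eps/C$; let $c>0, \eps'>0$ denote the resulting constants.

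Introduce stopping times $\tau_n := nT_0 \wedge \tau^*$ and, inductively, $\cF_{\tau_n}$-measurable random group elements $\gamma_n \in \grp$. Set $\gamma_0 = \eye_\grp$; given $\gamma_n$, define the shifted process $\tilde u_n(s) := \Pi(\gamma_n^{-1})u(\tau_n + s)$ and the shifted initial perturbation $v_n := \tilde u_n(0) - u^*$, which starts at $v_0 \equiv 0$ for $n=0$. By \cref{ass:sympde} and the strong Markov property of the Wiener process, $\tilde u_n$ solves a version of \cref{eq:spde} on $[0, T_0 \wedge (\tau^*-\tau_n))$, driven by $W(\tau_n + \cdot) - W(\tau_n)$, with $A$ and $F$ unchanged and with $G, H$ replaced by the conjugates $\tilde G_n(t,f) := \Pi(\gamma_n^{-1})G(\tau_n+t,\Pi(\gamma_n)f)$ and $\tilde H_n$ analogously. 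These still satisfy \cref{ass:GHregular} with constants uniform in $n$ thanks to the uniform bound $\norm{\Pi(g)}_{\cL(\cX)} \leq M_1$. Applying Theorem \ref{thm:stochstabshort} conditionally on $\cF_{\tau_n}$, on the event $\{\norm{v_n}_\cX \leq \alpha\eps/C\}$ the ``bad event''
\begin{equation*}
    B_n := \Bigl\{\sup_{s \in [0,T_0\wedge(\tau^*-\tau_n))} \norm{\tilde u_n(s) - \Pi(\tilde\gamma_s^{(n)})u^*}_\cX - M_3\alpha e^{-as}\eps/C \geq \eps/C\Bigr\},
\end{equation*}
with $\tilde\gamma_s^{(n)} := \exp(sX)\exp(e^{sL}\cP v_n)$, satisfies $\PP[B_n \mid \cF_{\tau_n}] \leq 6\exp(-c\eps^2\sigma^{-2})$. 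Define $\gamma_{n+1} := \gamma_n \tilde\gamma_{T_0}^{(n)}$. On $B_n^c$, the estimate at $s = T_0$ and the bound $\norm{\Pi(g)}\leq M_1$ yield $\norm{v_{n+1}}_\cX \leq M_1(M_3\alpha e^{-aT_0} + 1)\eps/C \leq \alpha\eps/C$, preserving the induction hypothesis.

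Undoing the shift gives, on $\bigcap_{n<N} B_n^c$ with $N := \lceil T/T_0\rceil$, the bound $\inf_{\gamma\in\grp}\norm{u(t) - \Pi(\gamma)u^*}_\cX \leq M_1(M_3\alpha + 1)\eps/C = \eps$ for every $t \in [\tau_n, \tau_{n+1})$. Summing the conditional estimates over $n$ yields
\begin{equation*}
    \PP\Bigl[\bigcup_{n<N}B_n\Bigr] \leq 6N\exp(-c\eps^2\sigma^{-2}) \leq 12\,T\exp(-c\eps^2\sigma^{-2})
\end{equation*}
after absorbing the factor $T_0^{-1}$ into the constant (with the bound being trivial when $T$ is so small that $12T$ exceeds, say, $6$). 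The main bookkeeping difficulty is the conditional application of Theorem \ref{thm:stochstabshort} to the shifted equation: one must verify that $(\tilde G_n, \tilde H_n)$ satisfy \cref{ass:GHregular} with constants independent of the random shift (clear from $\norm{\Pi(g)} \leq M_1$) and that $v_n$ and $\gamma_n$ are $\cF_{\tau_n}$-measurable (immediate by induction, since $u(\tau_n)$ is), so that the constants $c, \eps'$ from Theorem \ref{thm:stochstabshort} can be chosen uniformly across steps.
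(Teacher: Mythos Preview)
Your proposal is correct and follows essentially the same approach as the paper: iterate \cref{thm:stochstabshort} over subintervals of fixed length, using the symmetry (\cref{ass:sympde}) and the uniform bound $\norm{\Pi(g)}_{\cL(\cX)}\le M_1$ to reset the problem at each step, then conclude by a union bound. The paper isolates the reset step as a separate lemma (\cref{lem:transform}) with slightly different parameter choices ($\alpha=2M_1^2$, $\widetilde T=a^{-1}\log(2M_1^2M_3)$) and then sketches the iteration and the treatment of intermediate times more tersely than you do, but the substance is the same.
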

Notice that unlike in \cref{thm:stochstabshort}, the constants $c,\eps'$ in \cref{thm:stochstablong} do not depend on $T$.
From the proof of \cref{thm:stochstablong}, it can also be seen that the phase at time $nT$ (where $T$ is a fixed, large enough constant and $n \in \bbN$) in \cref{eq:stochstablong} is given by the recursive relation
\begin{equation}
    \label{eq:simplephaselong}
    \gamma((n+1)T) = \gamma(nT) e^{TX} \exp(e^{TL}\cP[\Pi(\gamma(nT)^{-1})u(nT) - u^*]), \quad n \in \bbN,
\end{equation}
where $\gamma(0)$ is the identity element of $\grp$.
We emphasize the remarkable phenomenon that the phase at time $(n+1)T$ is already determined by the solution at time $nT$.
This shows that it is generally possible to `look into the future', and accurately predict where the pattern will be after time $T+t$, solely based on the profile at time $T$.

Finally we note that, although \cref{eq:simplephaselong} seems quite explicit, it does contain the solution $u$ on the right-hand side.
Hence, to compute $\gamma(nT)$ for large $n$, explicit information about the solution is required.
However, this feature is shared by competing definitions of the phase, and we are not aware of any way to exactly compute the motion of moving patterns without explicit knowledge of the solution.

\section{Examples}
\label{sec:examples}
In this section, we revisit and improve on some examples from the existing literature.
The first example was chosen to highlight the flexibility of working in Banach spaces, and the second example demonstrates how the phase may be concretely computed in a noncommutative scenario.

\subsection{Travelling pulse in a FitzHugh--Nagumo equation}
\label{subsec:fhn}
We consider the following stochastic fully diffusive FitzHugh--Nagumo equation:
\begin{equation}
\label{eq:fhn}
\begin{aligned}
    \partial_t u(t,x) &= \partial_x^2 u(t,x)  + f(u(t,x)) - v(t,x), \\
    \partial_t v(t,x) &= \delta \partial_x^2 v(t,x) + \eps(u(t,x) - \gamma v(t,x)),
\end{aligned}
\end{equation}
with $(t,x) \in \bbR^+ \times \bbR$.
Here, $\delta,\eps,\gamma$ are positive constants, and $f(u) = u(u-a)(1-u)$ for some $a \in (0,\tfrac{1}{2})$.
Existence of travelling pulse solutions to \cref{eq:fhn} with $\delta = 0$ was shown in \cite{conley_application_1984,gardner_existence_1983}, and stability was shown in \cite{jones_stability_1984}.
Stability of stochastic perturbations (with $\delta = 0$) is shown in \cite{eichinger_multiscale_2022}, using the phase-lag method introduced by Kr\"uger and Stannat \cite{kruger_front_2014}.

Although our assumptions allow us to take $\delta = 0$, we prefer to treat the case $\delta > 0$.
Stability of the fast pulse in this situation was treated in \cite{alexander_topological_1990}.
There, it is shown there exist $\delta,\eps,\gamma,c>0$ and $w^* = (u^*,v^*)^\top \in C^2_{\mathrm{ub}}(\bbR;\bbR^2)$ such that $\hat{w}(t,x) = (u^*(x - ct),v^*(x-ct))^\top$ is a solution to \cref{eq:fhn}.
Moreover, $u^*$ and $v^*$ converge to $0$ at an exponential rate as $\abs{x} \to \infty$, and the pulse $\hat{w}$ is stable.
We will show how \cref{eq:fhn} fits into our framework, and establish stability of stochastic perturbations of the travelling pulse in a flexible range of spaces at a low regularity level.

There are already multiple frameworks which have treated stochastic perturbations of \cref{eq:fhn} \cite{maclaurin_phase_2023,hamster_stability_2020}.
However, as previously mentioned, these works crucially rely on the Hilbert space structure of $L^2$ to be able to track the pulse and show stability.
This poses significant restrictions on the noise, and does not allow for treatment of equations which are well-posed in $L^p$-spaces for $p \neq 2$.
We will demonstrate that the results from \cref{sec:linsym,sec:nonlinstab} are strong enough to directly show stability in the Bessel space $\cX = H^{s,p}(\bbR;\bbR^2)$ for $p \in [2,\infty)$, $s \in (\tfrac{1}{p},\infty)$, which allows us to treat noise which is much rougher than in previous results.
In the rest of the section we will always assume $p$ and $s$ are in the previously mentioned range.

The condition $p \in [2,\infty)$ ensures that $\cX$ is $2$-smooth (when using an appropriate norm, which we assume from now on).
By Sobolev embedding, the condition $s > \tfrac{1}{p}$ implies that $\cX \hookrightarrow C_{\mathrm{ub}}(\bbR;\bbR^2)$, and it also ensures that $\cX$ is a Banach algebra, i.e.,
\begin{equation}
    \label{eq:algebra}
    \norm{f g}_\cX \leq C \norm{f}_\cX \norm{g}_\cX, \quad f,g \in \cX
\end{equation}
(see \cite[Chapter 4.6]{runst_sobolev_1996}).
We write \cref{eq:fhn} in the form \cref{eq:pde} by taking
\begin{equation*}
    A = 
    \begin{pmatrix}
        \partial_x^2 & 0 \\ 0 & \delta\partial_x^2
    \end{pmatrix}
    , \qquad  F \begin{pmatrix} u \\ v \end{pmatrix}
    =
    \begin{pmatrix}f(u) - v \\ \eps (u - \gamma v)\end{pmatrix}.
\end{equation*}
We begin by establishing higher order regularity of the pulse profile.
\begin{proposition}
    \label{prop:fhnpulsereg}
    We have $u^*,v^* \in H^k(\bbR;\bbR)$ for any $k \in \bbN$.
\end{proposition}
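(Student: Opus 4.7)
The plan is to combine two ingredients: pointwise (bootstrap) regularity from the travelling wave ODE, and exponential decay of all derivatives.

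Since $\hat{w}(t,x) = (u^*(x-ct), v^*(x-ct))^\top$ solves \cref{eq:fhn}, substituting and writing $\xi = x-ct$ shows that $(u^*,v^*)$ satisfies the second-order ODE system
\begin{align*}
    (u^*)'' &= -c(u^*)' - f(u^*) + v^*, \\
    \delta (v^*)'' &= -c(v^*)' - \eps(u^* - \gamma v^*).
\end{align*}
First I would show that $u^*,v^* \in C^k_{\mathrm{ub}}(\bbR;\bbR)$ for every $k \in \bbN$. The starting point $u^*,v^* \in C^2_{\mathrm{ub}}$ is given by \cite{alexander_topological_1990}. Proceeding by induction, if $u^*,v^* \in C^k_{\mathrm{ub}}$ for some $k \geq 2$, I differentiate the ODE system $k-1$ times; the right-hand sides are polynomials in the derivatives of $u^*,v^*$ of order at most $k-1$ (using that $f$ is a cubic polynomial), so they lie in $C^{1}_{\mathrm{ub}}$. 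Since $\delta > 0$, this gives $(u^*)^{(k+1)}, (v^*)^{(k+1)} \in C^1_{\mathrm{ub}}$, that is, $u^*,v^* \in C^{k+1}_{\mathrm{ub}}$.

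Next I would establish exponential decay of all derivatives. Rewriting the ODE system as a first-order system in $\Phi = (u^*, (u^*)', v^*, (v^*)')^\top$ yields $\Phi' = A_0 \Phi + N(\Phi)$, where $A_0$ is the linearization at $\Phi = 0$ and $N(\Phi) = O(|\Phi|^2)$ as $\Phi \to 0$ since $f(0) = 0$. Because $(u^*,v^*)$ decays to zero exponentially by \cite{alexander_topological_1990} and, together with its derivatives from the previous step, is bounded, the orbit $\Phi(\xi)$ stays in a small neighbourhood of zero for large $|\xi|$ and must lie on the (un)stable manifold of the origin. Standard ODE theory (the stable manifold theorem) then gives exponential decay at the same rate for \emph{all} four coordinates of $\Phi$, i.e., $u^*, v^*, (u^*)', (v^*)'$ decay exponentially. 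Differentiating the ODE and applying the previously derived regularity inductively shows that $(u^*)^{(j)}$ and $(v^*)^{(j)}$ are, for each $j \geq 2$, expressible as polynomials in $u^*, v^*$ and their lower-order derivatives (without constant term, since no constant enters through $f(0) = 0$), each of which decays exponentially. Hence every derivative decays exponentially.

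Combining the two ingredients: each $(u^*)^{(j)}$ and $(v^*)^{(j)}$ is continuous and decays exponentially, hence lies in $L^2(\bbR;\bbR)$, yielding $u^*, v^* \in H^k(\bbR;\bbR)$ for every $k \in \bbN$. The main obstacle is justifying the exponential decay of derivatives: the bootstrap of $C^k$-regularity is routine, but translating the known decay of $(u^*,v^*)$ into decay of $(u^*)', (v^*)'$ requires the stable-manifold argument above; once this is done, decay of higher derivatives follows mechanically from the ODE.
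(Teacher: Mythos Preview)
Your approach is correct and takes a genuinely different route from the paper's. The paper argues entirely on the PDE level: it writes the Duhamel formula
\[
w^*(\cdot - c) = \tilde S(1)w^* + \int_0^1 \tilde S(1-t')\,F(\hat w(t'))\,\mathrm{d}t'
\]
with $\tilde S$ the heat semigroup generated by $A$, and then uses the parabolic smoothing estimate $\|\tilde S(t)\|_{\cL(H^k;H^{k+1})} \lesssim t^{-1/2}$ to bootstrap $L^2 \to H^1 \to H^2 \to \cdots$, invoking the Banach-algebra property of $H^k$ ($k\ge 1$) to control the cubic term. Your argument instead exploits the travelling-wave ODE directly, first bootstrapping $C^k_{\mathrm{ub}}$-regularity and then separately establishing exponential decay of the derivatives. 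The ODE route is more elementary in that it avoids semigroup machinery, but it forces you to treat regularity and integrability in two separate steps; the paper's parabolic bootstrap delivers $H^k$-membership in one stroke.

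One step in your decay argument needs sharpening. You claim that boundedness of the derivatives together with decay of $u^*,v^*$ implies that $\Phi(\xi)$ ``stays in a small neighbourhood of zero for large $|\xi|$''. Boundedness alone does not give smallness; what you actually need is $(u^*)',(v^*)' \to 0$ as $|\xi|\to\infty$. This does follow, either from a Barbalat-type argument (decay of $u^*,v^*$ plus boundedness of the \emph{second} derivatives forces the first derivatives to vanish at infinity), or more directly from the fact that in \cite{alexander_topological_1990} the pulse is constructed as a homoclinic orbit in the four-dimensional phase space, so all components of $\Phi$ tend to zero by construction. You also implicitly use that the origin is a hyperbolic equilibrium of the first-order system; this holds in the parameter regime where the pulse exists, but it is worth stating explicitly since the stable manifold theorem requires it.
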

\begin{proof}
    Let $\tilde{S}(t)$ be the semigroup on $L^2(\bbR;\bbR^2)$ generated by $A$ (note that $\tilde{S}(t)$ can be explicitly realized as a convolution with the heat kernel).
    By Duhamel's formula and the fact that $\hat{w}$ solves \cref{eq:fhn}, we have
    \begin{equation}
        \label{eq:fhnduhamelstar}
        \begin{pmatrix}
            u^*(\cdot - c) \\
            v^*(\cdot - c)
        \end{pmatrix}
         = 
        \tilde{S}(1)\begin{pmatrix}
            u^* \\ v^*
        \end{pmatrix}
        + \int_0^1 \tilde{S}(1-t')
        \begin{pmatrix}
            f(u^*(\cdot - ct')) - v(\cdot - ct') \\
            \eps(u^*(\cdot - ct') - \gamma v(\cdot - ct'))
        \end{pmatrix}
        \rd t'.
    \end{equation}
    By continuity of $u^*,v^*,f$ and exponential decay of $u^*,v^*$ we have $u^*,v^*,f(u^*) \in L^2$.
    Since $\norm{\tilde{S}(t)}_{\cL(L^2(\bbR;\bbR^2);H^1(\bbR;\bbR^2))} \leq C t^{-\frac{1}{2}}$, we find from \cref{eq:fhnduhamelstar} that $u^*,v^* \in H^1(\bbR;\bbR)$.
    Furthermore, if $u^*,v^* \in H^k(\bbR;\bbR)$ for some $k \geq 1$, then also $f(u^*) \in H^k(\bbR;\bbR)$ by \cref{eq:algebra}, since $f$ is a polynomial which vanishes at zero.
    Thus, using \cref{eq:fhnduhamelstar} and the estimate $\norm{\tilde{S}(t)}_{\cL(H^k(\bbR;\bbR^2);H^{k+1}(\bbR;\bbR^2))} \leq C_k t^{-\frac{1}{2}}$, we get $u^*,v^* \in H^{k+1}(\bbR;\bbR)$.
    The claim follows by induction on $k$.
\end{proof}
We now introduce the linear operators
\begin{equation*}
    A' = 
    \begin{pmatrix}
        \partial_{xx} + c\partial_x & 0 \\ 0 & \delta\partial_{xx} + c \partial_x
    \end{pmatrix}
    , \qquad  F' = \begin{pmatrix} f'(u^*) & -1 \\ \eps & - \eps \gamma \end{pmatrix}.
\end{equation*}
Combining \cref{prop:fhnpulsereg} with \cref{eq:algebra} and the fact that $f$ is a (cubic) polynomial, we obtain the following regularity properties of $A'$, $F$, and $F'$.
\begin{proposition}
    \label{prop:AFprimereg}
    Let $q \in [2,\infty)$ and $k \in \bbN$.
    The operator $A'$ genereates a semigroup on $H^k(\bbR;\bbR^2)$ and on $L^q(\bbR;\bbR^2)$.
    The nonlinearity $F$ is infinitely Fr\'echet differentiable from $\cX$ to $\cX$.
    The first derivative of $F$ at $w^*$ is given by $F'$, which is also bounded on $H^k(\bbR;\bbR^2)$ and on $L^q(\bbR;\bbR^2)$.
\end{proposition}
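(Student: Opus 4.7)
The plan is to dispatch the three assertions independently, using standard constant-coefficient parabolic theory for $A'$ and the Banach algebra property of $\cX$ for $F$ and its derivative.

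For the semigroup generation, I would exploit that $A'$ is diagonal, reducing the problem to showing that each scalar operator $\mu \partial_x^2 + c \partial_x$ (with $\mu \in \{1,\delta\}$) generates a $C_0$-semigroup on $L^q(\bbR;\bbR)$ and on $H^k(\bbR;\bbR)$. The cleanest route is the explicit representation
\begin{equation*}
    [e^{t(\mu\partial_{xx} + c\partial_x)}f](x) = [e^{t\mu\partial_{xx}}f](x + ct),
\end{equation*}
that is, the composition of translation by $ct$ with the classical heat semigroup. Both the heat semigroup and the translation group are $C_0$-semigroups on $L^q(\bbR;\bbR)$ and $H^k(\bbR;\bbR)$, and since both are Fourier multipliers they commute; their product is therefore a $C_0$-semigroup whose generator coincides with $\mu \partial_{xx} + c \partial_x$ on the Schwartz class, and identification of the generator on the full natural domain follows from closability and density. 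Taking the direct product over the two coordinates yields the claim on $L^q(\bbR;\bbR^2)$ and $H^k(\bbR;\bbR^2)$.

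For the Fr\'echet differentiability of $F$, I would decompose $F = F_{\mathrm{lin}} + F_{\mathrm{nl}}$, where $F_{\mathrm{lin}}(u,v)^\top = (-v,\eps u - \eps\gamma v)^\top$ is a bounded linear operator on $\cX$ (hence smooth), and $F_{\mathrm{nl}}(u,v)^\top = (f(u),0)^\top$ with $f(u) = -u^3 + (1+a)u^2 - a u$. Because $s > 1/p$, the scalar factor $H^{s,p}(\bbR;\bbR)$ inherits the Banach algebra property \cref{eq:algebra}, so for each $n \in \{1,2,3\}$ the map $u \mapsto u^n$ is the restriction to the diagonal of a continuous $n$-linear form, which is smooth with polynomial multilinear differentials. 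Summing these gives the infinite Fr\'echet differentiability of $F_{\mathrm{nl}}$, and a direct calculation of the linear term in the expansion of $F(w^* + h) - F(w^*)$ identifies the derivative at $w^*$ with the matrix $F'$ displayed in the statement.

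It remains to show that $F'(w^*)$, viewed as a matrix of pointwise multiplication operators, is bounded on $H^k(\bbR;\bbR^2)$ and on $L^q(\bbR;\bbR^2)$. The constant entries $-1$, $\eps$, $-\eps\gamma$ act trivially, so the only nontrivial claim is that $h \mapsto f'(u^*) h$ is bounded on $H^k(\bbR;\bbR)$ and on $L^q(\bbR;\bbR)$. Since $f'(u^*) = -3(u^*)^2 + 2(1+a)u^* - a$ and \cref{prop:fhnpulsereg} gives $u^* \in H^m(\bbR;\bbR)$ for every $m$, I would combine this with the Sobolev embedding $H^m(\bbR;\bbR) \hookrightarrow W^{k,\infty}(\bbR;\bbR)$ for $m$ large and either the Banach algebra property of $H^k(\bbR;\bbR)$ (for $k \geq 1$) or a direct H\"older estimate (for $k = 0$ and for $L^q$) to bound the multiplication. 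The only mild obstacle I anticipate is selecting the cleanest multiplication estimate on $H^k$ that works uniformly in $k$; once that is in place, all three assertions follow.
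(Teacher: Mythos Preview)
Your proposal is correct and aligns with the paper's own justification, which consists of the single sentence preceding the proposition: ``Combining \cref{prop:fhnpulsereg} with \cref{eq:algebra} and the fact that $f$ is a (cubic) polynomial, we obtain the following regularity properties of $A'$, $F$, and $F'$.'' You have simply filled in the details the paper leaves implicit, using exactly the ingredients it names (regularity of $u^*$, the Banach algebra property of $\cX$, and the polynomial structure of $f$), together with the standard realization of $A'$ as heat-plus-drift for the semigroup claim.
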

We now verify the assumptions formulated in \cref{sec:linsym,sec:nonlinstab}.
We take $\grp = (\bbR,+)$, and define $\Pi$ according to $\Pi(a)w(x) = w(x-a)$.
The corresponding Lie algebra $\alg$ can be identified with $(\bbR,+)$, in which case the exponential map from $\alg$ to $\grp$ acts as the identity.
The action of $\pi$ is given by $\pi(a) = -a\partial_x$.
Since $A$ and $F$ clearly commute with $\Pi(a)$ and translation is strongly continuous in $H^{s,p}(\bbR;\bbR^2)$, we see that \cref{ass:sympde} holds true.
As we have $\hat{w}(t,x) = w^*(x - ct)$, we should take $X = c$ in \cref{eq:uhat}.
For \cref{eq:taylorestimate}, we need to show
\begin{equation*}
    \norm{w^*(\cdot - a) - w^* + a \partial_x w^*}_\cX \leq C \abs{a}^2, \quad a \in \bbR,
\end{equation*}
which follows from a Taylor expansion and the smoothness of $w^*$ established in \cref{prop:fhnpulsereg}.
The remaining statements of \cref{ass:selfsimilarsolution} are easily verified using \cref{prop:fhnpulsereg}, and
\cref{ass:Fregular} follows directly from \cref{prop:AFprimereg}.

The linearization of \cref{eq:fhn} around $w^*$ (in the comoving frame) is formally given by
\begin{equation}
    \label{eq:fhnLstar}
    \cL^* = A' + F' = 
    \begin{pmatrix}
        \partial_x^2 + c \partial_x + f'(u^*) & - 1 \\
        \eps & \delta\partial_x^2 + c \partial_x - \eps \gamma
    \end{pmatrix}.
\end{equation}
From \cref{prop:AFprimereg} and a perturbation argument (see \cite[Chapter 9]{kato_perturbation_1995}), it follows that $\cL^*$ generates a semigroup $S^*(t)$ on $H^k(\bbR;\bbR^2)$ and on $L^q(\bbR;\bbR^2)$ for any $k \in \bbN$ and any $q \in [1,\infty)$. 
By complex interpolation, we find that $S^*(t)$ is also a semigroup on $\cX$.
Hence, \cref{ass:semigroup} is shown except for the boundedness of $S^*(t)$, which will follow from \cref{eq:linstab,eq:Sstarcenter} once \cref{ass:decomposition} has been established.

In \cite{alexander_topological_1990}, \cref{ass:decomposition} has been shown to hold in the space $\cX = C_{\mathrm{ub}}(\bbR;\bbR^2)$.
A slight modification of the argument in \cite[Appendix A.2]{eichinger_multiscale_2022} shows that \cref{ass:decomposition} also holds with $\cX = L^2(\bbR;\bbR^2)$.
In both cases, $P^*_c$ is given by the same spectral projection onto the zero eigenvalue of $\cL^*$.
By a minor modification of the proof of Marcinkiewicz interpolation theorem, (using a smooth cutoff instead of the usual partition $f = \ind_{\abs{f} > a}f + \ind_{\abs{f} \leq a}f$, see \cite{stein_singular_1970}), we see that operators which are bounded on $L^2(\bbR;\bbR^2)$ and $C_{\mathrm{ub}}(\bbR;\bbR^2)$ are also bounded on $L^q(\bbR;\bbR^2)$.
It follows that \cref{ass:decomposition} holds with $\cX = L^q(\bbR;\bbR^2)$ for any $q \in [2,\infty)$.

Now fix any $k \in \bbN$.
Since $P^*_c$ projects onto the space spanned by $\partial_x w^*$, it follows from \cref{prop:fhnpulsereg} that $\norm{P^*_c}_{\cL(L^2(\bbR;\bbR^2);H^k(\bbR;\bbR^2))} < \infty$, which implies that $P^*_c$ (hence, also $P^*_s$) is bounded on $H^k(\bbR;\bbR^2)$.
Furthermore, by the smoothing effect of the Laplacian, we have $\norm{S^*(1)}_{\cL(L^2(\bbR;\bbR^2);H^k(\bbR;\bbR^2))} < \infty$.
Thus, \cref{eq:linstab} self-improves from $\cX = L^2(\bbR;\bbR^2)$ to $\cX = H^k(\bbR;\bbR^2)$, so that \cref{ass:decomposition} holds with $\cX = H^k(\bbR;\bbR^2)$ for any $k \in \bbN$.
By complex interpolation between $L^q(\bbR;\bbR^2)$ and $H^k(\bbR;\bbR^2)$ for appropriate values of $q$ and $k$, we finally find that \cref{ass:decomposition} is satisfied with $\cX = H^{s,p}(\bbR;\bbR^2)$.

With \cref{ass:sympde,ass:selfsimilarsolution,ass:semigroup,ass:decomposition,ass:Fregular} verified, we study stochastic perturbations of \cref{eq:fhn}.
We consider two cases.

\subsubsection{Additive noise}
We consider the following equation:
\begin{equation}
\label{eq:fhnstoch1}
\begin{aligned}
    \rd u(t,x) &= [\partial_x^2 u(t,x)  + f(u(t,x)) - v(t,x)] \rd t + \sigma\sum_{i=0}^{\infty} g_i(t,x)\rd \beta_i(t), \\
    \rd v(t,x) &= [\delta \partial_x^2 v(t,x) + \eps(u(t,x) - \gamma v(t,x))] \rd t,
\end{aligned}
\end{equation}
where $\sigma > 0$, $\beta_i$ is a sequence of independent Brownian motions, and $g_i(t,\cdot)$ is a (deterministic) sequence of functions in $\cX$.
To translate this to the language of \cref{subsec:stochpert}, we take $\cH = \ell^2(\bbN)$ with the usual orthonormal basis $(e_i)_{i \in \bbN}$, and let $G$ be given by $(G(t) e_i)(x) = g_i(t,x)$.
To verify \cref{ass:GHregular}, we only need to make sure that \cref{eq:Gbounded} holds (as $H \equiv 0$ and $\cref{eq:Glipschitz}$ is trivial).
Since $H^{s,p}(\bbR;\bbR)$ has type $2$, we have the embedding $\ell^2(\bbN;H^{s,p}(\bbR;\bbR)) \hookrightarrow \gamma(\ell^2(\bbN);H^{s,p}(\bbR;\bbR))$ \cite[Theorem 9.2.10]{hytonen_analysis_2016}.
Hence, aside from measurability, it suffices to require
\begin{equation}
    \label{eq:fhnnoise1}
    \sup_{t \in [0,\infty]} \sum_{i=0}^{\infty}\norm{g_i(t,\cdot)}_{H^{s,p}(\bbR;\bbR)}^2 < \infty.
\end{equation}
With all the assumptions verified, we obtain from \cref{thm:stochstablong} the following concrete result.
\begin{theorem}
    Let $p \in [2,\infty)$, $s \in (\tfrac{1}{p},\infty)$, let $g_i$ be such that \cref{eq:fhnnoise1} is satisfied, and let $u(t)$ be the solution to \cref{eq:fhnstoch1} with initial condition $u(0,\cdot) = w^*$.
    There exist constants $c,\eps'$ such that we have the estimate
    \begin{equation*}
        \PP[\Big]{ \sup_{t \in [0,T]} \inf_{a \in \bbR} \norm{u(t,\cdot) - w^*(\cdot - a)}_{H^{s,p}(\bbR;\bbR^2)} \geq \eps}
        \leq 12\, T \exp(-c \eps^2 \sigma^{-2}),
    \end{equation*}
    for all $T > 0$ and all $\eps,\sigma$ satisfying $0 < \sigma \leq \eps \leq \eps'$.
\end{theorem}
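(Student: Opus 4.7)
The plan is to verify the remaining hypothesis \cref{ass:GHregular} for this particular noise and then invoke \cref{thm:stochstablong} directly, since the preceding discussion in \cref{subsec:fhn} has already established \cref{ass:sympde,ass:selfsimilarsolution,ass:semigroup,ass:decomposition,ass:Fregular} with $\cX = H^{s,p}(\bbR;\bbR^2)$, $\grp = (\bbR,+)$, and $\Pi(a)w(x) = w(x-a)$.

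The bulk of the work is checking that the noise operator $G$ defined by $(G(t)e_i)(x) = g_i(t,x)$ satisfies \cref{ass:GHregular}. Since $H \equiv 0$ and $G$ is deterministic and independent of $u$, the Lipschitz estimates for $G$ and $H$ and the bound on $H$ are trivial. For the bound \cref{eq:Gbounded}, I would argue as follows. Because $p \in [2,\infty)$, the space $H^{s,p}(\bbR;\bbR)$ has type $2$, so by \cite[Theorem 9.2.10]{hytonen_analysis_2016} we have a continuous inclusion $\ell^2(\bbN;H^{s,p}(\bbR;\bbR)) \hookrightarrow \gamma(\ell^2(\bbN);H^{s,p}(\bbR;\bbR))$, whence
\begin{equation*}
    \norm{G(t)}_{\gamma(\cH;\cX)}^2 \leq C \sum_{i=0}^{\infty} \norm{g_i(t,\cdot)}_{H^{s,p}(\bbR;\bbR)}^2,
\end{equation*}
which is uniformly bounded in $t$ by \cref{eq:fhnnoise1}. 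Progressive measurability reduces to strong measurability of the deterministic map $t \mapsto G(t)$, which follows since the $\gamma$-norm bound above holds pointwise in $t$ and $(g_i)$ may be approximated in the obvious way.

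With \cref{ass:GHregular} verified, I can directly apply \cref{thm:stochstablong} with $v_0 \equiv 0$. The orbit under $\grp$ is $\cur{w^*(\cdot - a) : a \in \bbR}$, so the infimum $\inf_{\gamma \in \grp}\norm{u(t) - \Pi(\gamma)u^*}_\cX$ in \cref{eq:stochstablong} becomes exactly $\inf_{a \in \bbR}\norm{u(t,\cdot) - w^*(\cdot - a)}_{H^{s,p}(\bbR;\bbR^2)}$.

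The only remaining subtlety is replacing $\sup_{t \in [0,T \wedge \tau^*)}$ from \cref{thm:stochstablong} by $\sup_{t \in [0,T]}$ in the conclusion. This follows from the blow-up alternative in \cref{thm:localwellposed}: on the complement of the event in \cref{eq:stochstablong}, one has $\norm{u(t,\cdot)}_{H^{s,p}(\bbR;\bbR^2)} \leq \norm{w^*}_{H^{s,p}(\bbR;\bbR^2)} + \eps$ for all $t \in [0, T \wedge \tau^*)$, which precludes blow-up and forces $\tau^* > T$. Hence both suprema coincide on the good event, and the claimed estimate follows. I do not anticipate any serious obstacle here; the substantive step is purely the type-$2$ embedding argument for $G$, and everything else is a bookkeeping application of the general machinery.
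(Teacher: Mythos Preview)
Your proposal is correct and follows exactly the route the paper takes: the paper simply states this theorem as an immediate consequence of \cref{thm:stochstablong} once \cref{ass:GHregular} has been checked via the type-$2$ embedding $\ell^2(\bbN;H^{s,p}) \hookrightarrow \gamma(\ell^2(\bbN);H^{s,p})$, which is precisely your argument. You are in fact slightly more careful than the paper, which does not explicitly address the passage from $\sup_{t\in[0,T\wedge\tau^*)}$ to $\sup_{t\in[0,T]}$; your blow-up alternative argument handles this correctly.
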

As remarked before, we can allow the regularity parameter $s$ to be arbitrarily small (as long as $p$ is sufficiently large).
\subsubsection{Multiplicative noise}
We now consider the equation
\begin{equation}
\label{eq:fhnstoch2}
\begin{aligned}
    \rd u(t,x) &= [\partial_x^2 u(t,x)  + f(u(t,x)) - v(t,x)] \rd t + \sigma g(u)(\phi * \rd W(t)), \\
    \rd v(t,x) &= [\delta \partial_x^2 v(t,x) + \eps(u(t,x) - \gamma v(t,x))] \rd t,
\end{aligned}
\end{equation}
where $\sigma > 0$, $\rd W(t)$ is space-time white noise, and $g,\phi \colon \bbR \to \bbR$ are suitable functions.
The symbol $*$ denotes convolution.
Notice that the noise term in \cref{eq:fhnstoch2} is translationally invariant, which is motivated by symmetry and has relevance in many physical applications.
It is well-known that an $L^2(\bbR;\bbR)$-cylindrical Wiener process formally corresponds to space-time white noise.
Therefore, in the language of \cref{subsec:stochpert}, we should take $\cH = L^2(\bbR;\bbR)$,
 and define $G(u)$ via
\begin{equation*}
    (G(t,u)f)(x) = g(u(x))\int_\bbR \phi(y-x)f(y) \rd y.
\end{equation*}
From \cref{prop:Ggammabessel}, it follows that \cref{ass:GHregular} is satisfied if both of the following conditions are met:
\begin{itemize}
    \item $g$ maps $H^{s,p}(\bbR;\bbR)$ into $H^{s,p}(\bbR;\bbR)$, and is locally Lipschitz in $H^{s,p}(\bbR;\bbR)$.
    \item $\phi \in H^{s}(\bbR;\bbR)$.
\end{itemize}
Since $H^{s,p}(\bbR;\bbR)$ is a Banach algebra by our choice of $s,p$, the first condition is satisfied by any smooth function $g \colon \bbR \to \bbR$ which satisfies $g(0) = 0$.
Applying \cref{thm:stochstablong} thus results in the following.
\begin{theorem}
    \label{thm:fhnstoch2}
    Let $p \in [2,\infty)$, $s \in (\tfrac{1}{p},\infty)$, $\phi \in H^s(\bbR;\bbR)$, let $g$ be a polynomial which satisfies $g(0) = 0$, and let $u(t)$ be the solution to \cref{eq:fhnstoch2} with initial condition $u(0,\cdot) = w^*$.
    There exist constants $c,\eps'$ such that we have the estimate
    \begin{equation*}
        \PP[\Big]{ \sup_{t \in [0,T]} \inf_{a \in \bbR} \norm{u(t,\cdot) - w^*(\cdot - a)}_{H^{s,p}(\bbR;\bbR^2)} \geq \eps}
        \leq 12\, T \exp(-c \eps^2 \sigma^{-2}),
    \end{equation*}
    for all $T > 0$ and all $\eps,\sigma$ satisfying $0 < \sigma \leq \eps \leq \eps'$.
\end{theorem}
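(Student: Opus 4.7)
The plan is a direct application of \cref{thm:stochstablong} with $\cX = H^{s,p}(\bbR;\bbR^2)$. All structural assumptions---\cref{ass:sympde}, \cref{ass:selfsimilarsolution}, \cref{ass:semigroup}, \cref{ass:decomposition}, and \cref{ass:Fregular}---have already been verified for the fully diffusive FitzHugh--Nagumo system in the discussion preceding \cref{eq:fhnstoch2}, and $\cX$ is $2$-smooth since $p \in [2,\infty)$. Only \cref{ass:GHregular} remains to be checked.

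To this end, I recast \cref{eq:fhnstoch2} in the language of \cref{eq:spde} by setting $\cH = L^2(\bbR;\bbR)$, $H \equiv 0$, and
\[
    (G(t,w) f)(x) = \bigl(\, g(u(x)) \cdot (\phi * f)(x),\ 0 \,\bigr), \qquad w = (u,v).
\]
Since $H \equiv 0$, only the bounds \cref{eq:Gbounded} and \cref{eq:Glipschitz} on $G$ need to be established; progressive measurability follows from the pointwise definition. These bounds follow by combining two ingredients. First, \cref{prop:Ggammabessel} controls the $\gamma(\cH;\cX)$-norm of such a convolution-multiplication operator by $\norm{g(u)}_{H^{s,p}(\bbR;\bbR)} \norm{\phi}_{H^s(\bbR;\bbR)}$. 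Second, since $s > 1/p$ the space $H^{s,p}(\bbR;\bbR)$ is a Banach algebra via \cref{eq:algebra}, so composition with a polynomial $g$ vanishing at zero (writing $g(u) = u\,\tilde g(u)$ and iterating) defines a bounded, locally Lipschitz map from $H^{s,p}(\bbR;\bbR)$ into itself. Together these yield \cref{eq:Gbounded} and \cref{eq:Glipschitz}.

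With all assumptions verified, \cref{thm:stochstablong} applies. Since $\grp = (\bbR,+)$ acts by translations $\Pi(a)w(x) = w(x-a)$ and $u^* = w^*$, the orbital distance $\inf_{\gamma \in \grp}\norm{u(t) - \Pi(\gamma) u^*}_\cX$ specializes exactly to $\inf_{a \in \bbR}\norm{u(t,\cdot) - w^*(\cdot - a)}_{H^{s,p}(\bbR;\bbR^2)}$, and the conclusion becomes the claimed estimate. The only substantive obstacle is the $\gamma$-radonifying bound packaged in \cref{prop:Ggammabessel}: for $p > 2$ this cannot be reduced to Hilbert--Schmidt theory and instead relies on the type-$2$ embedding $L^2(\bbR; H^{s,p}(\bbR)) \hookrightarrow \gamma(L^2(\bbR); H^{s,p}(\bbR))$ combined with smoothness estimates on the integral kernel $\phi(\cdot - x)$. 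Everything else is routine bookkeeping within the abstract framework developed in \cref{sec:nonlinstab}.
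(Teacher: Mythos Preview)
Your proposal is correct and follows essentially the same route as the paper: verify \cref{ass:GHregular} for the multiplicative noise via \cref{prop:Ggammabessel} together with the Banach-algebra property \cref{eq:algebra} of $H^{s,p}$ (which handles the polynomial $g$ with $g(0)=0$), and then invoke \cref{thm:stochstablong}. The only minor imprecision is your aside about the type-$2$ embedding: the paper's \cref{prop:Ggammabessel} is actually proved via the $\gamma$-Fubini isomorphism (\cref{lem:gammaG}) rather than that embedding, but since you correctly defer the $\gamma$-radonifying bound to \cref{prop:Ggammabessel} this does not affect the argument.
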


By taking $p$ large and $s$ small, we see that $\phi$ only needs to be slightly more regular than an $L^2$-function to have orbital stability.

\begin{remark}
    \label{rem:fhnphase}
    Since the symmetry group is commutative, in this case \cref{eq:predictedphase} simplifies to
    \begin{equation*}
        a_t = c t + \cP v_0.
    \end{equation*}
    By the Riesz representation theorem, it also follows that $\cP v_0 = \langle \phi^0,v_0 \rangle_{L^2(\bbR;\bbR^2)}$ for some $\phi^0 \in L^2(\bbR;\bbR^2)$.
    In fact, it can be shown that $\phi^0$ is an eigenfuction of the adjoint of $\cL^*$ with eigenvalue $0$ \cite{hamster_stability_2020}.
\end{remark}

\begin{remark}
    \label{rem:fhnparabolic}
    The proof of \cref{thm:stochstabshort} does not make use of the smoothing effect of the Laplacian in \cref{eq:fhn}.
    Thus, we expect that with a minor modification, we can consider noise which is even more singular in both the additive and multiplicative case.
    We leave this for future research.
\end{remark}

\subsection{Rotating waves in two dimensions}
\label{subsec:rotwave}
Our second example is a localized rotating wave in two dimensions.
Existence and (non)linear stability of such waves has been studied in \cite{beyn_nonlinear_2008,kuehn_stochastic_2022,cohen_rotating_1978,scheel_bifurcation_1998,hagan_spiral_1982,dai_ginzburg_2021}.
Instead of verifying \cref{ass:sympde,ass:selfsimilarsolution,ass:semigroup,ass:decomposition,ass:Fregular} like in the previous example, (which can be done using the results from \cite{beyn_nonlinear_2008}), we focus on computing the symmetry group and phase prediction function, which show how the noncommutativity enters into the phase.

We consider the reaction-diffusion equation
\begin{equation}
    \label{eq:pderotating}
    \rd u(t,x) = [\Delta u(t,x) + f(u(t,x))] \rd t, \qquad x = (x_1,x_2)^\top \in \bbR^2,
\end{equation}
where $u$ takes values in $\bbR^n$, $\Delta$ acts as a Laplacian in all components, and $f \colon \bbR^n \to \bbR^n$ is sufficiently smooth and satisfies $f(0) = 0$.
We assume existence of a smooth and localized \emph{rotating wave} solution of the form $\hat{u}(t,x) = u^*(R_{-\omega t}x)$, where $u^*$ is the wave profile, $\omega > 0$ is the rotation speed, and $R_{\theta}$ is a rotation matrix, i.e.,
\begin{equation*}
    R_{\theta} = \begin{pmatrix}\cos(\theta) & - \sin(\theta) \\ \sin(\theta) & \cos(\theta) \end{pmatrix}, \quad \theta \in \bbR.
\end{equation*}
The natural symmetry group of \cref{eq:pderotating} is the special Euclidean group $\mathrm{SE}(2)$, which is the semi-direct product of the group of translations and the group of rotations around the origin.
$\mathrm{SE}(2)$ can be represented by matrices of the form
\begin{equation*}
    \cT_{x_1,x_2,\theta} = \begin{pmatrix}
        \cos(\theta) & - \sin(\theta) & x_1 \\ 
        \sin(\theta) & \cos(\theta) & x_2 \\
        0 & 0 & 1
    \end{pmatrix},
\end{equation*}
with $x_1,x_2,\theta \in \bbR$ (notice that $(x_1,x_2,\theta) \mapsto \cT_{x_1,x_2,\theta}$ is not injective).
In this case, the group multiplication is just matrix multiplication.
After identifying the vector $(a,b)^\top$ with $(a,b,1)^\top$, we obtain an action of $\cT_{x_1,x_2,\theta}$ on $\bbR^2$ via matrix-vector multiplication.
This induces an action on the space of functions on $\bbR^2$, given by $(\Pi(\cT_{x_1,x_2,\theta}))f(x) = f(\cT_{x_1,x_2,\theta}^{-1}x)$.
Hence, the element $\cT_{x_1,x_2,0}$ acts as a translation by $(x_1,x_2)$, and $\cT_{0,0,\theta}$ acts as counterclockwise rotation around the origin by $\theta$ radians. 
By differentiating with respect to $x_1,x_2,\theta$, we see that the corresponding Lie algebra (which we denote $\mathfrak{se}(2)$) can be represented as the span of the following matrices:
\begin{equation}
    \label{eq:rotX123}
    X_1 = \begin{pmatrix}
        0 & 0 & 1 \\ 0 & 0 & 0 \\ 0 & 0 & 0
    \end{pmatrix},\quad
    X_2 = \begin{pmatrix}
        0 & 0 & 0 \\ 0 & 0 & 1 \\ 0 & 0 & 0
    \end{pmatrix},\quad
    X_3 = \begin{pmatrix}
        0 & -1 & 0 \\ 1 & 0 & 0 \\ 0 & 0 & 0
    \end{pmatrix}.
\end{equation}
Furthermore, we have
\begin{align*}
    e^{tX_1} &= \cT_{x_1,0,0}, & \Pi(e^{tX_1})f(x_1,x_2) &= f(x_1 - t, x_2), & \pi(X_1) &= -\partial_{x_1}, \\
    e^{tX_2} &= \cT_{0,x_2,0}, & \Pi(e^{tX_2})f(x_1,x_2) &= f(x_1, x_2 - t), & \pi(X_2) &= -\partial_{x_2}, \\
    e^{tX_3} &= \cT_{0,0,\theta}, & \Pi(e^{tX_3})f(x) &= f(R_{-t}x), & \pi(X_3) &= -x_1 \partial_{x_2} + x_2 \partial_{x_1},
\end{align*}
where $\partial_{x_1},\partial_{x_2}$ denote partial differentiation with respect to $x_1,x_2$, respectively.
Using these expressions, we find that
\begin{equation}
    \label{eq:rotuhat}
    \hat{u}(t) = \Pi(\cT_{0,0,\omega t})u^* = \Pi(e^{t\omega X_3})u^*.
\end{equation}
Thus, we need to take $X = \omega X_3$ in \cref{ass:selfsimilarsolution}.
From \cref{eq:rotX123}, we also find the following commutation relations:
\begin{equation}
    \label{eq:rotatingcomm}
    [X_1,X_2] = 0,\quad [X_1,X_3] = X_2,\quad [X_2,X_3] = -X_1.
\end{equation}

Abbreviating $\partial_{\psi} = x_1 \partial_{x_2} - x_2 \partial_{x_1}$, we obtain the following expression for the linearization around $u^*$ in the comoving frame:
\begin{equation*}
    \cL^* = \Delta + \omega \partial_{\psi} + f'(u^*).
\end{equation*}
We will now explicitly compute the predicted phase function given by \cref{eq:predictedphase}.
From \cref{eq:defL}, we find
\begin{equation*}
    L Y = [Y,\omega X_3], \quad Y \in \mathfrak{se}(2).
\end{equation*}
Using the ordered basis $(X_1,X_2,X_3)$, the matrix representation of $L$ and $e^{tL}$ (obtained from \cref{eq:rotatingcomm} and by exponentiating) are given by
\begin{equation}
    \label{eq:rotLexpL}
    L = \begin{pmatrix}
        0 & -\omega & 0 \\ \omega & 0 & 0 \\ 0 & 0 & 0
    \end{pmatrix}, \qquad
    e^{tL} = \begin{pmatrix}
        \cos(\omega t) & -\sin(\omega t) & 0 \\ \sin(\omega t) & \cos(\omega t) & 0 \\ 0 & 0 & 1
    \end{pmatrix}.
\end{equation}

From \cref{eq:rotLexpL} we see that $\sigma(L_\bbC) = \cur{0, i \omega, -i\omega}$, so $\cL^*$ has spectrum on the imaginary axis.
The form of $e^{tL}$ clearly shows that it is necessary to include both the translational and the rotational symmetries to have any chance of orbital stability.

It only remains to find an explicit expression for $\cP$.
To do this, we use the approach from \cite[Section 2.2]{beyn_nonlinear_2008} (see also \cite[Section 3.2]{kuehn_stochastic_2022}).
There, it is shown that there exist functions $\phi^1,\phi^2,\phi^3 \in L^2(\bbR^2;\bbR^n)$ such that we have
\begin{equation*}
    P^*_c v_0 = -\langle \phi^1, v_0 \rangle \partial_{x_1} u^*
    - \langle \phi^2, v_0 \rangle \partial_{x_2}u^*
    - \langle \phi^3,v_0 \rangle \partial_{\psi}u^*,
    \quad v_0 \in L^2(\bbR;\bbR^n),
\end{equation*}
where the inner products are taken in $L^2(\bbR^2;\bbR^d)$.
We note that such a representation of $P^*_c$ can also be derived from the Riesz representation theorem.
The functions $\phi^1,\phi^2,\phi^3$ are suitable linear combinations of eigenfunctions of the adjoint of $\cL^*$.
Thus, using for $\mathfrak{se}(2)$ the basis $(X_1,X_2,X_3)$, the map $\cP$ can be written as
\begin{equation}
    \label{eq:rotP}
    \cP v_0 = \begin{pmatrix}
        \langle \phi^1, v_0 \rangle \\ \langle \phi^2, v_0 \rangle \\ \langle \phi^3, v_0 \rangle
    \end{pmatrix}, \quad v_0 \in L^2(\bbR^2;\bbR^n).
\end{equation}
Combining \cref{eq:rotuhat,eq:predictedphase,eq:rotLexpL,eq:rotP}, we arrive at the following expression for the predicted phase:
\begin{equation*}
    \gamma_t = 
    \begin{pmatrix}
        \cos(\omega t) & -\sin(\omega t) & 0 \\ 
        \sin(\omega t) & \cos(\omega t) & 0 \\
        0 & 0 & 1
    \end{pmatrix}
    \exp\bra[\Bigg]{
    B\begin{pmatrix}
        \cos(\omega t) & -\sin(\omega t) & 0 \\ 
        \sin(\omega t) & \cos(\omega t) & 0 \\
        0 & 0 & 1
    \end{pmatrix}
    \begin{pmatrix}
        \langle \phi^1, v_0 \rangle \\ \langle \phi^2, v_0 \rangle \\ \langle \phi^3, v_0 \rangle
    \end{pmatrix}
    },
\end{equation*}
where $B$ is the map sending $(a,b,c)^\top$ to $aX_1 + bX_2 + cX_3$, and $\exp$ denotes the matrix exponential.

\section{Proof of nonlinear stability}
\label{sec:proof}
\subsection{Deterministic stability}
\label{subsec:proofdetstab}
\begin{proof}[Proof of \cref{thm:detstab}]
    Fix $T > 0$, $\delta > 0$.
    We define $v(t) \coloneq u(t) - \hat{u}(t)$.
    Considering that $u(t)$ and $\hat{u}(t)$ both solve \cref{eq:milddet} (with initial values $u^* + v_0$ and $u^*$, respectively),
    we see that
    \begin{equation}
        \label{eq:detv}
        v(t) = S(t,0)v_0 + \int_0^t S(t,t')\bra[\big]{F(\hat{u}(t') + v(t')) - F(\hat{u}(t')) - F'(\hat{u}(t'))v(t')}\rd t'.
    \end{equation}
    We now claim that there exists an $\eps > 0$ such that 
    \begin{equation}
        \label{eq:detstabclaim}
        \norm{v_0}_\cX \leq \eps \implies \norm{v(\cdot)-S(\cdot,0)v_0}_{C([0,T];\cX)} \leq \tfrac{1}{2}\delta\norm{v_0}_\cX.
    \end{equation}
    To see this, we define for $v_0 \in \cX$ the time
    \begin{equation}
        \label{eq:tv0}
        t_{v_0} \coloneq \sup \cur[\big]{ t \in [0,T] : \norm{v}_{C([0,t];\cX)} \leq 3M_2\norm{v_0}_\cX},
    \end{equation}
    where $M_2$ is the constant from \cref{eq:SttestimateStt}.
    Using \cref{ass:Fregular,eq:detv,eq:SttestimateStt,eq:tv0}, we find a constant $C_1$ (independent of $v_0$) such that we have
    \begin{equation*}
        \norm{v(t)} \leq M_2 \norm{v_0}_\cX + M_2 T C_1 (3M_2\norm{v_0}_\cX)^2, \quad t \leq t_{v_0} \wedge T.
    \end{equation*} 
    Choosing $\eps$ small enough based on $C_1,M_2,T$, we get the estimate
    \begin{equation*}
        \norm{v(t)} \leq 2 M_2 \norm{v_0}_\cX, \quad t \leq t_{v_0} \wedge T,
    \end{equation*}
    whenever $\norm{v_0}_\cX \leq \eps$.
    By \cref{eq:tv0} and continuity, this implies $t_{v_0} \geq T$, which leads via \cref{eq:detv} to the further implication that
    \begin{equation*}
        \norm{v(t) - S(t,0)v_0}_\cX \leq M_2 T C_1 (3M_2\norm{v_0}_\cX)^2, \quad t \leq T.
    \end{equation*}
    After choosing $\eps$ even smaller based on $\delta,M_2,T,C_1$, \cref{eq:detstabclaim} follows.
    Since we have the identity
    \begin{equation*}
        u(t) = \hat{u}(t) + S(t,0)v_0 + (v(t) - S(t,0)v_0),
    \end{equation*}
    we can combine \cref{eq:detstabclaim} with \cref{lem:nonlinstep1} to find
    \begin{equation*}
        \norm{u(t) - \Pi(\gamma_t)u^*}_\cX \leq M_3 e^{-at}\norm{v_0}_\cX + C M_1 K_T^2 \norm{v_0}_\cX^2 + \tfrac{1}{2}\delta\norm{v_0}_\cX, \quad t \in [0,T],
    \end{equation*}
    whenever $\norm{v_0}_\cX \leq \eps$.
    Choosing $\eps$ even smaller such that $\eps \leq \delta(2 CM_1K_T^2)^{-1}$, we obtain \cref{eq:detorbstab}.
\end{proof}

\subsection{Stochastic stability, short times}
\label{subsec:proofstochstabshort}
This section contains the proof of \cref{thm:stochstabshort}.
Fix $T > 0$ and $\alpha > 1$.
For $\eps,\sigma > 0$, we define the events
\begin{subequations}
\label{eq:defAB}
\begin{align}
    \label{eq:defA}
    A_{\eps} &\coloneq \cur[\big]{ \omega \in \Omega : \norm{v_0}_\cX \leq \alpha\eps}, \\
    \label{eq:defB}
    B_{\eps,\sigma} & \coloneq \cur[\Big]{ \omega \in \Omega : \sup_{t \in [0,T]} \norm[\big]{\sigma \int_0^t S(t,t')G(t',\hat{u}(t'))\rd W(t')}_\cX \leq \tfrac{1}{4}\eps}.
\end{align}
\end{subequations}
We begin by formulating two lemmas relating to these events.
\begin{lemma}
    \label{lem:stab1}
    There exists a constant $\eps_1' > 0$, independent of $v_0$, such that the inequality
    \begin{equation}
        \label{eq:stabABest}
        \norm{u(t) - \Pi(\gamma_t)u^*}_\cX \leq (M_3 e^{-at}\alpha + \tfrac{1}{2})\eps + \norm{z(t)}_\cX
    \end{equation}
    holds for all $t \in [T \wedge \tau^*)$ and  $\omega \in A_{\eps} \cap B_{\eps,\sigma}$, whenever $0 < \sigma \leq \eps \leq \eps_1'$ ($z$ is as in \cref{prop:asym}, and $M_3$ and $a$ are the constants from \cref{eq:SttestimateSttPs}).
\end{lemma}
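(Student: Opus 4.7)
The plan is to combine the asymptotic expansion from \cref{prop:asym} with the abstract estimate of \cref{lem:nonlinstep1}, using the two events $A_\eps$ and $B_{\eps,\sigma}$ to control the resulting terms. Concretely, \cref{prop:asym} gives the decomposition
\begin{equation*}
    u(t) = \hat{u}(t) + S(t,0)v_0 + f(t), \qquad f(t) \coloneq \sigma \int_0^t S(t,t')G(t',\hat{u}(t'))\rd W(t') + z(t),
\end{equation*}
which puts us in exactly the setting where \cref{lem:nonlinstep1} applies. Invoking that lemma gives
\begin{equation*}
    \norm{u(t) - \Pi(\gamma_t)u^*}_\cX \leq M_3 e^{-at}\norm{v_0}_\cX + CM_1 K_T^2\norm{v_0}_\cX^2 + \norm{f(t)}_\cX
\end{equation*}
for $t \in [0,T \wedge \tau^*)$.

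The next step is to plug in the bounds supplied by $A_\eps$ and $B_{\eps,\sigma}$. On $A_\eps$ we have $\norm{v_0}_\cX \leq \alpha\eps$, so the first term contributes $M_3 e^{-at}\alpha\eps$ and the quadratic term is bounded by $CM_1K_T^2\alpha^2\eps^2$. On $B_{\eps,\sigma}$ the stochastic convolution inside $f$ is bounded by $\tfrac{1}{4}\eps$ uniformly over $t \in [0,T]$, giving $\norm{f(t)}_\cX \leq \tfrac{1}{4}\eps + \norm{z(t)}_\cX$. Adding the pieces,
\begin{equation*}
    \norm{u(t) - \Pi(\gamma_t)u^*}_\cX \leq M_3 e^{-at}\alpha\eps + CM_1K_T^2\alpha^2\eps^2 + \tfrac{1}{4}\eps + \norm{z(t)}_\cX.
\end{equation*}

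The final step is to absorb the quadratic contribution into the $\tfrac{1}{2}\eps$ slack: choosing $\eps_1' > 0$ small enough (depending only on $C,M_1,K_T,\alpha$, hence independent of $v_0$) so that $CM_1 K_T^2\alpha^2 \eps \leq \tfrac{1}{4}$ whenever $\eps \leq \eps_1'$, we can replace $CM_1K_T^2\alpha^2\eps^2$ by $\tfrac{1}{4}\eps$, which yields the desired \cref{eq:stabABest}. The only real subtlety is bookkeeping: making sure that the constants $C,M_1,K_T$ used when choosing $\eps_1'$ are all independent of $v_0$ (which they are, since they depend only on $F$, $\grp$, the representation, and $T$), so that the resulting $\eps_1'$ is uniform in the initial condition. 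No new analytic input beyond \cref{lem:nonlinstep1,prop:asym} is needed.
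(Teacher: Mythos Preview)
Your proof is correct and follows essentially the same route as the paper: apply \cref{lem:nonlinstep1} to the decomposition from \cref{prop:asym}, use $A_\eps$ and $B_{\eps,\sigma}$ to bound $\norm{v_0}_\cX$ and the stochastic convolution, and then absorb the quadratic term by choosing $\eps_1' = (4CM_1K_T^2\alpha^2)^{-1}$. The paper's proof is identical in substance, only slightly more terse in presentation.
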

\begin{proof}
    By \cref{lem:nonlinstep1} and \cref{eq:defv,eq:expansion}, we have for $t \in [0,\tau^*)$:
    \begin{align*}
            \norm{u(t) - \Pi(\gamma_t)u^*}_\cX &\leq M_3e^{-at}\norm{v_0}_\cX + C M_1 K_T^2 \norm{v_0}^2_\cX \\
             &\quad+ \norm[\big]{\sigma \int_0^t S(t,t')G(t',\hat{u}(t'))\rd W(t')}_\cX + \norm{z(t)}_\cX.
    \end{align*}
    Therefore, for $\omega \in A_{\eps} \cap B_{\eps,\sigma}$ we have by \cref{eq:defAB}
    \begin{equation*}
        \norm{u(t) - \Pi(\gamma_t)u^*}_\cX \leq (M_3e^{-at}\alpha + \tfrac{1}{4})\eps + C M_1 K_T^2 \alpha^2 \eps^2 + \norm{z(t)}_\cX, \quad t \in [0,T \wedge \tau^*).
    \end{equation*}
    The claim now follows by choosing $\eps_1' = (4C M_1 K_T^2 \alpha^2)^{-1}$.
\end{proof}

\begin{lemma}
    \label{lem:stab2}
    There exist constants $\eps_2', c > 0$, independent of $v_0$, such that the inequality
    \begin{equation}
        \label{eq:ztailest}
        \PP[\Big]{\sup_{t \in [0,T \wedge \tau^*)} \norm{z(t)}_\cX \geq \tfrac{1}{2}\eps,\, A_{\eps},\, B_{\eps,\sigma}}
        \leq 3\exp(-c \sigma^{-2})
    \end{equation}
    holds for all $0 < \sigma \leq \eps \leq \eps_2'$.
\end{lemma}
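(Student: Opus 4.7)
The plan is to introduce a stopping time $\rho$ that confines both $v$ and $z$ to a neighborhood of $\hat u$, bringing us into a regime where the local bounds of \cref{ass:Fregular,ass:GHregular} apply with uniform constants, and then to split $z$ into its three Duhamel contributions. Setting $C_\alpha \coloneq M_2\alpha + \tfrac{3}{4}$, I first observe that on $A_\eps \cap B_{\eps,\sigma}$ the explicit formula \cref{eq:defv} for $v$ combined with \cref{eq:SttestimateStt,eq:defB} yields $\norm{v(t)}_\cX \leq (M_2\alpha + \tfrac{1}{4})\eps$ throughout $[0,T]$. As long as $\norm{z(t)}_\cX \leq \tfrac{\eps}{2}$, we then have $\norm{u(t)-\hat u(t)}_\cX \leq C_\alpha\eps$, and hence $u(t)$ stays in a fixed ball of radius $R \coloneq M_1\norm{u^*}_\cX + C_\alpha\eps_2'$ (using \cref{eq:SttestimatePi} to bound $\hat u$). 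The local constants from \cref{ass:Fregular,ass:GHregular} on this ball are then uniform.

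Concretely, I would define
\begin{equation*}
\rho \coloneq \inf\cur[\big]{t \in [0,T \wedge \tau^*) : \norm{v(t)}_\cX > (M_2\alpha + \tfrac{1}{4})\eps \text{ or } \norm{z(t)}_\cX > \tfrac{\eps}{2}},
\end{equation*}
with $\inf\emptyset \coloneq T \wedge \tau^*$. On $A_\eps \cap B_{\eps,\sigma}$ the $v$-condition never triggers, so on the event in \cref{eq:ztailest} continuity of $z$ forces $\rho < T \wedge \tau^*$ and $\norm{z(\rho)}_\cX = \tfrac{\eps}{2}$. Writing the mild form of \cref{eq:dz} as $z = I_1 + I_2 + I_3$ with $I_3$ the stochastic convolution, the quadratic bound from \cref{ass:Fregular} gives $\norm{I_1(\rho)}_\cX \leq M_2 T \cdot C C_\alpha^2\eps^2$, and the boundedness of $H$ from \cref{ass:GHregular} gives $\norm{I_2(\rho)}_\cX \leq M_2 T C \sigma^2 \leq M_2 T C \eps^2$. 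Choosing $\eps_2'$ small enough that the sum of these is at most $\tfrac{\eps}{3}$ forces $\norm{I_3(\rho)}_\cX \geq \tfrac{\eps}{6}$.

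For the stochastic term, I set $g(t) \coloneq \sigma\ind_{[0,\rho]}(t)\,[G(t,u(t)) - G(t,\hat u(t))]$, which by the $\rho$-localization and \cref{eq:Glipschitz} is bounded \emph{almost surely} by $\norm{g}_{L^\infty_\Omega(L^\infty(0,T;\gamma(\cH;\cX)))} \leq \sigma C C_\alpha\eps$. The stochastic convolution $J(t) \coloneq \int_0^t S(t,t')g(t')\rd W(t')$ agrees with $I_3(t)$ on $[0,\rho]$, so the chain above forces $\sup_{t \in [0,T]}\norm{J(t)}_\cX \geq \tfrac{\eps}{6}$. \cref{prop:stochconvtail} then yields a bound of the form $e\,\exp(-c\sigma^{-2})$ for some $c = c(T,C,C_\alpha) > 0$, and $e < 3$ gives the claimed constant.

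The main obstacle is arranging for $g$ to be pathwise bounded, as \cref{prop:stochconvtail} demands. The events $A_\eps$ and $B_{\eps,\sigma}$ are only $\cF_T$-measurable, so simply restricting to them does not produce a pathwise bound on $v$ at each time $t$. Including the $v$-condition inside $\rho$ converts the event-conditional bound into a pathwise one on $[0,\rho]$, while on $A_\eps \cap B_{\eps,\sigma}$ this extra condition never becomes active, so $\rho$ captures precisely the first exit of $\norm{z}_\cX$ from $[0,\eps/2]$. Once this setup is in place, the rest of the argument reduces to routine Duhamel estimation.
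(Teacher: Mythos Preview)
Your proposal is correct and follows essentially the same strategy as the paper: localize via stopping times that enforce pathwise bounds on $v$ and $z$, split $z$ into its three Duhamel contributions, absorb the two deterministic terms by choosing $\eps_2'$ small, and then apply \cref{prop:stochconvtail} to the indicator-truncated stochastic integrand. The only differences are cosmetic---the paper uses two separate sup-type stopping times $\tau_v,\tau_z$ and the split $\tfrac{1}{4}+\tfrac{1}{4}$ instead of your single $\rho$ and $\tfrac{1}{3}+\tfrac{1}{6}$---and your closing paragraph correctly identifies the one subtle point, namely that the $v$-bound must be built into the stopping time to make the integrand pathwise bounded.
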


\begin{proof}
For $\eps,\sigma > 0$ we define
\begin{subequations}
\label{eq:tauzv}
\begin{align}
    \tau_v &\coloneq \sup \cur[\big]{t \in [0,T] : \norm{v}_{C([0,t];\cX)} \leq (\alpha M_2 + \tfrac{1}{4})\eps}, \\
    \tau_z &\coloneq \sup \cur[\big]{t \in [0,T \wedge \tau^*) : \norm{z}_{C([0,t];\cX)} \leq \tfrac{1}{2} \eps},
\end{align}
\end{subequations}
where $M_2$ is the constant from \cref{eq:SttestimateStt}.
By continuity and \cref{eq:expansion}, we see that $\tau_v \wedge \tau_z < \tau^*$, so both $u(\tau_v \wedge \tau_z)$ and $z(\tau_v \wedge \tau_z)$ are well-defined.
We also observe that the event $A_{\eps} \cap B_{\eps,\sigma}$ implies $\tau_v = T$ by \cref{eq:defv,eq:defAB}.
Therefore, by continuity of $z$ we have
\begin{equation*}
    \PP[\Big]{\sup_{t \in [0,T \wedge \tau^*)} \norm{z(t)}_\cX \geq \tfrac{1}{2}\eps,\, A_{\eps},\, B_{\eps,\sigma}}
    \leq \PP[\big]{\norm{z(\tau_v \wedge \tau_z)}_{\cX} = \tfrac{1}{2} \eps},
\end{equation*}
so it suffices to estimate the latter probability.

By the mild solution formula for \cref{eq:dz}, we have for $t \in [0,\tau_v \wedge \tau_z]$:
\begin{equation}
\label{eq:defzT1T2}
\begin{aligned}
    z(t) &= \int_0^t S(t,t')\bra[\big]{F(u(t')) - F(\hat{u}(t')) - F'(\hat{u}(t'))[u(t') - \hat{u}(t')]} \rd t' \\
    &\quad+ \int_0^t S(t,t')\sigma \bra[\big]{G(t',u(t')) - G(t',\hat{u}(t'))} \rd W(t') \\
    &\quad+ \int_0^t S(t,t')\sigma^2 H(t',u(t')) \rd t' \\
    &\eqcolon T_1(t) + T_2(t) + T_3(t).
\end{aligned}
\end{equation}
We now set $C_2 \coloneq \alpha M_2 + \tfrac{3}{4}$.
From \cref{eq:uhat,eq:SttestimatePi,eq:expansion,eq:tauzv}, we see that for $t \in [0,\tau_v \wedge \tau_z]$ we have
\begin{align*}
    \norm{u(t)}_\cX &\leq M_1 \norm{u^*}_\cX + C_2 \eps, \\
    \norm{u(t) - \hat{u}(t)}_\cX &\leq C_2\eps.
\end{align*}
By \cref{ass:Fregular,ass:GHregular}, we see that there exists a constant $C_3$, independent of $\eps,\sigma,v_0$, such that the inequalities
\begin{subequations}
\begin{align}
    \label{eq:zFest}
    \norm{F(u(t)) - F(\hat{u}(t)) - F'(\hat{u}(t))[u(t) - \hat{u}(t)]}_\cX &\leq  C_3 C_2^2\eps^2, \\
    \label{eq:zGest}
    \norm{G(t,u(t)) - G(t,\hat{u}(t))}_{\gamma(\cH;\cX)} &\leq C_3 C_2 \eps, \\
    \label{eq:zHest}
    \norm{H(t,u(t))}_\cX &\leq C_3,
\end{align}
\end{subequations}
hold for all $t \in [0,\tau_v \wedge \tau_z]$.
Therefore, by \cref{eq:SttestimateStt} we have the estimates
\begin{align*}
    \norm{T_1(\tau_v \wedge \tau_z)}_\cX &\leq M_2 T C_3 C_2^2 \eps^2, \\
    \norm{T_3(\tau_v \wedge \tau_z)}_\cX &\leq M_2 T C_3 \sigma^2,
\end{align*}
Choosing $\eps_2' = (8M_2 T C_3 C_2^2)^{-1}$ and using \cref{eq:defzT1T2}, this gives (note that $C_2 \geq 1$)
\begin{equation*}
    \norm{z(\tau_v \wedge \tau_z)}_{\cX} \leq \tfrac{1}{4} \eps + \norm{T_2(\tau_v \wedge \tau_z)}_\cX,
\end{equation*}
whenever $0 < \sigma \leq \eps \leq \eps_2'$.
Thus, we have
\begin{equation*}
    \PP[\big]{\norm{z(\tau_v \wedge \tau_z)}_{\cX} = \tfrac{1}{2} \eps} \leq \PP[\big]{\norm{T_2(\tau_v \wedge \tau_z)}_\cX \geq \tfrac{1}{4} \eps},
\end{equation*}
so it only remains to estimate this probability.
Now observe that
\begin{align*}
    \norm{T_2(\tau_v \wedge \tau_z)}_\cX 
    &\leq \sup_{t \in [0,\tau_v \wedge \tau_z]} \norm[\Big]{\int_0^t S(t,t')\sigma \bra[\big]{G(t',u(t')) - G(t',\hat{u}(t'))} \rd W(t')}_\cX \\
    &\leq \sup_{t \in [0,T]} \norm[\Big]{\int_0^t S(t,t') \ind_{[0,\tau_v \wedge \tau_z]}(t')\sigma \bra[\big]{G(t',u(t')) - G(t',\hat{u}(t'))} \rd W(t')}_\cX.
\end{align*}
From \cref{eq:zGest}, we also have
\begin{equation*}
    \norm[\big]{ \ind_{[0,\tau_v \wedge \tau_z]}(\cdot)\sigma \bra[\big]{G(\cdot,u(\cdot)) - G(t,\hat{u}(\cdot))}}_{L^{\infty}(0,T;\gamma(\cH;\cX))} \leq C_3 C_2 \sigma\eps.
\end{equation*}
Thus, by \cref{prop:stochconvtail}, there is a constant $c > 0$, depending only on $S$, such that
\begin{equation*}
    \PP[\big]{\norm{T_2(\tau_v \wedge \tau_z)}_\cX \geq \tfrac{1}{4} \eps} \leq 3 \exp\bra[\big]{-c C_3^{-2} C_2^{-2}T^{-1} \sigma^{-2}}. \qedhere
\end{equation*}
\end{proof}
\begin{proof}[Proof of \cref{thm:stochstabshort}]
For $\eps,\sigma > 0$, we introduce the additional event
\begin{equation*}
    E_{\eps,\sigma} \coloneq \cur[\big]{ \omega \in \Omega: \sup_{t \in [0,T \wedge \tau^*)} \norm{u(t) - \Pi(\gamma_t)\hat{u}(t)}_\cX - M_3\alpha e^{-at}\eps \geq \eps }.
\end{equation*}
Let $\eps'_1$ be the constant obtained from \cref{lem:stab1}, and let $c_2$, $\eps_2'$ be the constants obtained from \cref{lem:stab2}.
Set $\eps' = \eps'_1 \wedge \eps'_2$.
By \cref{lem:stab1,lem:stab2}, we have
\begin{subequations}
\label{eq:EABtails}
\begin{equation}
\begin{aligned}
    \PP{E_{\eps,\sigma} \cap A_{\eps} \cap B_{\eps,\sigma}} &\overset{\cref{eq:stabABest}}{\leq} \PP[\Big]{\sup_{t \in [T \wedge \tau^*)} \norm{z(t)}_\cX \geq \tfrac{1}{2} \eps,\, A_\eps,\, B_{\eps,\sigma}} \\
    &\overset{\cref{eq:ztailest}}{\leq} 3 \exp(-c_2 \sigma^{-2})
\end{aligned}
\end{equation}
for $0 < \sigma \leq \eps\leq \eps'$.
Similarly to how we treated $T_2$ in \cref{lem:stab2}, we find using \cref{prop:stochconvtail} a constant $c_1 > 0$ such that
\begin{equation}
    \PP{\Omega \setminus B_{\eps,\sigma}} \leq 3\exp(-c_1 \sigma^{-2}\eps^2), \quad 0 < \sigma \leq \eps \leq \eps'.
\end{equation}
\end{subequations}
Thus, by a union bound and \cref{eq:EABtails}, we have
\begin{equation*}
    \PP{E_{\eps,\sigma} \cap A_{\eps}} 
    \leq \PP{E_{\eps,\sigma} \cap A_{\eps} \cap B_{\eps,\sigma}} + \PP{\Omega \setminus B_{\eps,\sigma}}
    \leq 3 \exp(-c_1 \sigma^{-2}\eps^2) + 3 \exp(-c_2 \sigma^{-2}),
\end{equation*}
for all $0 < \sigma \leq \eps \leq \eps'$.
The desired estimate \cref{eq:stochstabshort} then follows by taking $c = c_1 \wedge c_2$.
\end{proof}

\subsection{Stochastic stability, long times}
\label{subsec:proofstochstablong}
We now prove \cref{thm:stochstablong}.
First, we show a lemma which allows us to `reset' the phase in \cref{thm:stochstabshort} by performing a coordinate transformation.

\begin{lemma}
    \label{lem:transform}
    Set $v_0 \equiv 0$.
    Let $T > 0$, and let $\gamma_T$ be a $\grp$-valued $\cF_T$-measurable random variable.
    There exist constants $c,\eps',\widetilde{T}$, independent of $T,\gamma_T$, and an $\cF_T$-measurable function $\gamma_{\widetilde{T}}$, such we have the estimate
    \begin{equation*}
        \PP[\big]{ \norm{u(T+\widetilde{T}) - \Pi(\gamma_T\gamma_{\widetilde{T}})u^*}_\cX \geq \eps,\,  
        \norm{u(T) - \Pi(\gamma_T)u^*}_\cX \leq \eps} \leq 6\exp(-c \eps^2 \sigma^{-2}),
    \end{equation*}
    for all $\eps,\sigma$ satisfying $0 < \sigma \leq \eps \leq \eps'$.
\end{lemma}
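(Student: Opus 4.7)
The plan is to reduce the statement to a direct application of Theorem~\ref{thm:stochstabshort} via a symmetry transformation that resets the clock to time zero. I would introduce the conjugated process $\tilde u(t) := \Pi(\gamma_T^{-1}) u(T+t)$, whose initial deviation $\tilde v_0 := \tilde u(0) - u^* = \Pi(\gamma_T^{-1})(u(T) - \Pi(\gamma_T) u^*)$ is $\cF_T$-measurable, together with the time-shifted Wiener process $\tilde W(t) := W(T+t) - W(T)$, which is an $(\cF_{T+t})_{t \geq 0}$-cylindrical Wiener process. By Assumption~\ref{ass:sympde}, $\tilde u$ solves an SPDE of the same form as~\eqref{eq:spde} driven by $\tilde W$ with initial perturbation $\tilde v_0$ and conjugated coefficients $\tilde G(\omega,t,f) := \Pi(\gamma_T(\omega)^{-1}) G(\omega,T+t,\Pi(\gamma_T(\omega)) f)$ and analogously $\tilde H$; these are progressively measurable with respect to $(\cF_{T+t})_{t \geq 0}$ and inherit the bounds of Assumption~\ref{ass:GHregular} with constants scaled by at most $M_1^2$, uniformly in $\gamma_T$.

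Next, I would apply Theorem~\ref{thm:stochstabshort} to the shifted problem with the choices $\tilde\alpha := 2M_1^2$, horizon $\widetilde T := a^{-1}\log(2M_1^2 M_3) + 1$, and perturbation level $\tilde\eps := \eps/(2M_1)$, and define
\[
\gamma_{\widetilde T} := \exp(\widetilde T X) \exp(e^{\widetilde T L} \cP \tilde v_0),
\]
which is exactly the predicted phase~\eqref{eq:predictedphase} of the shifted process at time $\widetilde T$, and is $\cF_T$-measurable. On the event $\{\norm{u(T) - \Pi(\gamma_T) u^*}_\cX \leq \eps\}$ one has $\norm{\tilde v_0}_\cX \leq M_1 \eps = \tilde\alpha \tilde\eps$, so the theorem furnishes, outside an event of probability at most $6\exp(-\tilde c \tilde\eps^2 \sigma^{-2})$, the bound
\[
\norm{\tilde u(\widetilde T) - \Pi(\gamma_{\widetilde T}) u^*}_\cX \leq (M_3 \tilde\alpha e^{-a\widetilde T} + 1)\tilde\eps \leq 2\tilde\eps = \eps/M_1.
\]
Pulling back via $u(T+\widetilde T) - \Pi(\gamma_T \gamma_{\widetilde T}) u^* = \Pi(\gamma_T)\bigl(\tilde u(\widetilde T) - \Pi(\gamma_{\widetilde T}) u^*\bigr)$ and using $\norm{\Pi(\gamma_T)}_{\cL(\cX)} \leq M_1$ yields $\norm{u(T+\widetilde T) - \Pi(\gamma_T \gamma_{\widetilde T}) u^*}_\cX \leq \eps$; absorbing the factor $(2M_1)^{-2}$ into the exponent gives the estimate with $c := \tilde c (2M_1)^{-2}$ and $\eps' := 2M_1 \tilde\eps''$, where $\tilde c,\tilde\eps''$ are the constants supplied by Theorem~\ref{thm:stochstabshort} for the chosen $\widetilde T, \tilde\alpha$ (both absolute, since $\widetilde T$ and $\tilde\alpha$ depend only on $M_1, M_3, a$).

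The main obstacle will be the mild mismatch between the hypothesis $\sigma \leq \eps$ of the lemma and the hypothesis $\sigma \leq \tilde\eps = \eps/(2M_1)$ required to apply Theorem~\ref{thm:stochstabshort}: for $\sigma \in (\tilde\eps, \eps]$ the direct reduction breaks down. This is harmless, however, because in that regime $\sigma/\eps > 1/(2M_1)$, so by choosing $c$ sufficiently small (depending only on $M_1$) the bound $6\exp(-c\eps^2\sigma^{-2})$ exceeds $1$ and the lemma becomes vacuous; the effective statement can therefore be proved under the additional restriction $\sigma \leq \tilde\eps$ and extended trivially to $\sigma \leq \eps$. The remaining verifications are routine: that $\tilde G,\tilde H$ inherit Assumption~\ref{ass:GHregular} follows from $\norm{\Pi(g)}_{\cL(\cX)} \leq M_1$ applied on both sides, and progressive measurability is preserved because $\gamma_T$ is $\cF_T$-measurable.
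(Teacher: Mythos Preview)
Your proposal is correct and follows essentially the same route as the paper: conjugate by $\Pi(\gamma_T^{-1})$, verify that the conjugated coefficients $\tilde G,\tilde H$ still satisfy Assumption~\ref{ass:GHregular}, apply Theorem~\ref{thm:stochstabshort} to the shifted equation with $\alpha = 2M_1^2$ and a horizon $\widetilde T$ of order $a^{-1}\log(2M_1^2 M_3)$, and pull back. You are in fact slightly more careful than the paper in two places: you write the shifted Wiener process as $W(T+t)-W(T)$ rather than $W(T+t)$, and you explicitly address the mismatch between the hypothesis $\sigma\le\eps$ and the rescaled constraint $\sigma\le\eps/(2M_1)$ needed to invoke Theorem~\ref{thm:stochstabshort}, which the paper absorbs silently into the phrase ``rescaling $\eps',c$ based on $M_1$''.
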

\begin{proof}
Fix $T > 0$, and set $\widetilde{T} =  a^{-1}\log(2M_1^2 M_3)$, where $a$, $M_1$, and $M_3$ are as in \cref{eq:Sttestimates}.
We use the random transformation $\tilde{u}(t) = \Pi(\gamma_T^{-1})u(T+t)$.
From \cref{ass:sympde} we find that $\tilde{u}$ solves
\begin{align*}
    \rd \tilde{u}(t) &= [A \tilde{u}(t) + F(\tilde{u}(t))]\rd t 
    + \sigma^2 \widetilde{H}(t,\tilde{u}(t)) \rd t 
    + \sigma \widetilde{G}(t,\tilde{u}(t)) \rd \tilde{W}(t) \\
    \tilde{u}(0) &= u^* + \tilde{v}_0,
\end{align*}
where we have introduced
\begin{align*}
    \widetilde{H}(t,f) &= \Pi(\gamma_T^{-1})H(T+t,\Pi(\gamma_T)f), \\
    \widetilde{G}(t,f) &= \Pi(\gamma_T^{-1})G(T+t,\Pi(\gamma_T)f), \\
    \widetilde{W}(t) &= W(T + t), \\
    \tilde{v}_0 &= \Pi(\gamma_T^{-1})u(T) - u^*.
\end{align*}
It can now be seen that \cref{ass:GHregular} still holds when $G,H$ are replaced by $\widetilde{G},\widetilde{H}$ (possibly with worse constants).
By choosing $\alpha = 2M_1^2$, and $T = \widetilde{T}$ in \cref{thm:stochstabshort}, we thus find $c,\eps' > 0$, and $\gamma_{\widetilde{T}}$ such that the estimate
\begin{equation}
    \label{eq:resetproofest}
    \PP[\big]{ \norm{\tilde{u}(\widetilde{T}) - \Pi(\gamma_{\widetilde{T}})u^*}_\cX \geq 2\eps,\,  \norm{\tilde{v}_0}_\cX \leq 2M_1^2 \eps} \leq 6\exp(-c \eps^2 \sigma^{-2})
\end{equation}
holds for all $0 < \sigma \leq \eps \leq \eps'$.
Note that in \cref{thm:stochstabshort}, $\gamma_{\widetilde{T}}$ is obtained purely from $\tilde{v}_0$ (via \cref{eq:predictedphase}), so that $\gamma_{\widetilde{T}}$ is $\cF_T$-measurable.
Using \cref{eq:SttestimatePi}, we also find
\begin{align*}
    \norm{\tilde{v}_0}_\cX &= \norm{\Pi(\gamma_T^{-1})\bra{ u(T) - \Pi(\gamma_T)u^* }}_\cX \\
    &\leq M_1 \norm{ u(T) - \Pi(\gamma_T)u^* }_\cX,
\end{align*}
and
\begin{align*}
    \norm{u(T+\widetilde{T}) - \Pi(\gamma_T\gamma_{\widetilde{T}})u^*}_\cX
    &= \norm{\Pi(\gamma_T) (\tilde{u}(\widetilde{T}) - \Pi(\gamma_{\widetilde{T}})u^*)}_\cX \\
    &\leq M_1 \norm{\tilde{u}(\widetilde{T}) - \Pi(\gamma_{\widetilde{T}})u^*}_\cX.
\end{align*}
Combining this gives
\begin{align*}
    \PP[\big]{ \norm{u(T+\widetilde{T}) &- \Pi(\gamma_{T}\gamma_{\widetilde{T}})u^*}_\cX \geq 2 M_1 \eps,\,  
        \norm{u(T) - \Pi(\gamma_T)}_\cX \leq 2 M_1 \eps} \\
    &\leq \PP[\big]{\norm{\tilde{u}(\widetilde{T}) - \Pi(\gamma_{\widetilde{T}}) u^*}_\cX \geq 2\eps,\,  \norm{\tilde{v}_0}_\cX \leq 2M_1^2 \eps} \\
    \overset{\eqref{eq:resetproofest}}&{\leq} 6\exp(-c \eps^2 \sigma^{-2}),
\end{align*}
for every $0 < \sigma \leq \eps \leq \eps'$. The result follows by rescaling $\eps',c$ based on $M_1$.
\end{proof}

\begin{proof}[Proof of \cref{thm:stochstablong}]
Let $c, \eps', \widetilde{T}$ be as in \cref{lem:transform}.
Using \cref{lem:transform}, we inductively find a sequence $\gamma_n$ of $\cF_{n\widetilde{T}}$-measurable $\grp$-valued random variables such that
\begin{equation*}
    \PP[\big]{ \norm{u((n+1)\widetilde{T}) - \Pi(\gamma_{n+1})u^*}_\cX \geq \eps,\,  
        \norm{u(n\widetilde{T}) - \Pi(\gamma_n)u^*}_\cX \leq \eps} \leq 6\exp(-c \eps^2 \sigma^{-2}),
\end{equation*}
for all $0 < \sigma \leq \eps \leq \eps'$ and all $n \in \bbN$.
Iterating this estimate and using a union bound, we find
\begin{equation*}
    \PP[\Big]{ \max_{k \leq n} \norm{u(k\widetilde{T}) - \Pi(\gamma_k)u^*}_\cX \geq \eps}
    \leq 6n \exp(-c \eps^2 \sigma^{-2}).
\end{equation*}
Applying \cref{thm:stochstabshort} on the intermediate intervals $[n\widetilde{T},(n+1)\widetilde{T}]$ then gives the result.
\end{proof}

\appendix
\section{Radonifying operators}
\label{app:radonifying}
Throughout this section, we write $L^p$ as a shorthand for $L^p(\bbR;\bbR)$, and likewise for $H^{s,p}$ and $W^{k,p}$. 
For measurable $u,\phi,f \colon \bbR \to \bbR$, we define the trilinear operator $G$:
\begin{align}
    \label{eq:tripleG}
    G(u,\phi,f)(x) = u(x)\int_\bbR \phi(y-x)f(y) \rd y.
\end{align}
\begin{lemma}
    \label{lem:gammaG}
    Let $p \in [1,\infty)$.
    There exists a constant $C$ such that the estimate
    \begin{equation}
        \label{eq:Ggammalebesgue}
        \norm{G(u,\phi,\cdot)}_{\gamma(L^2;L^p)} \leq C \norm{u}_{L^p}\norm{\phi}_{L^2},
    \end{equation}
    holds for all $u \in L^p$, $\phi \in L^2$.
\end{lemma}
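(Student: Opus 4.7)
The plan is to rewrite $G(u,\phi,\cdot)$ as a kernel operator $L^2 \to L^p$ and invoke the standard identification $\gamma(L^2;L^p) \cong L^p(\bbR;L^2)$, which holds for any $p \in [1,\infty)$ (see e.g.\ \cite[Theorem 9.4.8]{hytonen_analysis_2016}, or equivalently the Kahane--Khintchine inequality combined with a pointwise Parseval identity). Concretely, for fixed $u \in L^p$ and $\phi \in L^2$, define the map $g \colon \bbR \to L^2$ by $g(x) \coloneq u(x)\phi(\cdot - x)$. Then by definition of $G$ in \cref{eq:tripleG},
\begin{equation*}
    G(u,\phi,f)(x) = u(x)\int_\bbR \phi(y-x)f(y)\rd y = \langle g(x), f \rangle_{L^2}, \qquad f \in L^2,
\end{equation*}
so $G(u,\phi,\cdot)$ is precisely the operator associated to $g$ under the aforementioned isomorphism.

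Translation invariance of the $L^2$-norm then gives the clean identity
\begin{equation*}
    \norm{g(x)}_{L^2} = \abs{u(x)}\norm{\phi(\cdot - x)}_{L^2} = \abs{u(x)}\norm{\phi}_{L^2},
\end{equation*}
so $g \in L^p(\bbR;L^2)$ with $\norm{g}_{L^p(\bbR;L^2)} = \norm{u}_{L^p}\norm{\phi}_{L^2}$. Applying the cited isomorphism yields
\begin{equation*}
    \norm{G(u,\phi,\cdot)}_{\gamma(L^2;L^p)} \leq C \norm{g}_{L^p(\bbR;L^2)} = C\norm{u}_{L^p}\norm{\phi}_{L^2},
\end{equation*}
which is \cref{eq:Ggammalebesgue}.

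I do not expect any real obstacle: once the kernel identification is made, the proof is pure bookkeeping, and all the analytic work is offloaded to the $\gamma(L^2;L^p) \cong L^p(\bbR;L^2)$ isomorphism. The only minor technicality is verifying measurability of $g \colon \bbR \to L^2$, which follows from strong continuity of translation on $L^2$ (handling $\phi$) and approximation of $u$ by simple functions.
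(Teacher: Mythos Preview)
Your proof is correct and essentially identical to the paper's: the paper also invokes the $\gamma$-Fubini isomorphism \cite[Theorem 9.4.8]{hytonen_analysis_2016}, then computes $\sum_k |G(u,\phi,e_k)(x)|^2 = |u(x)|^2\|\phi(\cdot-x)\|_{L^2}^2$ via Parseval and uses translation invariance of the $L^2$-norm. The only difference is cosmetic---you phrase it via the kernel function $g$, whereas the paper expands in an orthonormal basis---but the content is the same.
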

\begin{proof}
    Let $e_k$ be an orthonormal basis of $L^2$.
    By the $\gamma$-Fubini isomorphism (see \cite[Theorem 9.4.8]{hytonen_analysis_2016}) and Parseval's identity, we have
    \begin{align*}
        \norm{G(u,\phi,\cdot)}_{\gamma(L^2;L^p)}^p
        &\leq C \int_\bbR \bra[\Big]{\sum_{k=0}^{\infty} \abs{G(u,\phi,e_k)(x)}^2}^{\frac{p}{2}}\rd x \\
        &= C \int_\bbR \bra[\Big]{\sum_{k=0}^{\infty} \abs{u(x)\int_\bbR \phi(y-x)e_k(y) \rd y}^2}^{\frac{p}{2}}\rd x \\
        &= C \int_\bbR \abs{u(x)}^p \norm{\phi(\cdot - x)}_{L^2}^p \rd x \\
        &= C \norm{u}_{L^p}^p \norm{\phi}_{L^2}^p. \qedhere
    \end{align*}
\end{proof}
\begin{proposition}
    \label{prop:Ggammabessel}
    Let $p \in (1,\infty)$, $s \in [0,\infty)$.
    There exists a constant $C$ such that the estimate
    \begin{equation}
        \label{eq:GgammaBessel}
        \norm{G(u,\phi,\cdot)}_{\gamma(L^2;H^{s,p})} \leq C \norm{u}_{H^{s,p}}\norm{\phi}_{H^{s}},
    \end{equation}
    holds for all $u \in H^{s,p}$, $\phi \in H^{s}$.
\end{proposition}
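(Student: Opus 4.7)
The strategy is to prove the estimate first for integer Sobolev exponents, then pass to general $s$ by bilinear complex interpolation. For $s = k \in \bbN$, we use that $H^{k,p}$ coincides with the classical Sobolev space $W^{k,p}$ with equivalent norms (for $p \in (1,\infty)$), and exploit the Leibniz rule. Since derivatives in $x$ act on $\phi(\cdot - x)$ by $-\partial$, differentiating under the integral yields
\begin{equation*}
    \partial^j G(u,\phi,f) = \sum_{l=0}^{j} (-1)^{j-l}\binom{j}{l} G(\partial^l u, \partial^{j-l}\phi, f).
\end{equation*}
Embedding $W^{k,p}$ isometrically into $\bigoplus_{j \leq k} L^p$ via $f \mapsto (\partial^j f)_{j \leq k}$, and applying the triangle inequality for the $\gamma$-norm both across this direct sum and across the finite Leibniz expansion, the integer case reduces to a finite collection of applications of \cref{lem:gammaG}, producing $\norm{G(u,\phi,\cdot)}_{\gamma(L^2;H^{k,p})} \leq C_k \norm{u}_{H^{k,p}}\norm{\phi}_{H^k}$.

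For non-integer $s$, we use bilinear complex interpolation. Pick $k \in \bbN$ with $k \geq s$ and set $\theta = s/k \in [0,1]$. The standard interpolation identities $[L^p, H^{k,p}]_\theta = H^{s,p}$ and $[L^2, H^k]_\theta = H^s$ for Bessel potential spaces, combined with the endpoint estimates from \cref{lem:gammaG} and the integer case, give via bilinear Calder\'on interpolation that the map $(u,\phi) \mapsto G(u,\phi,\cdot)$ is bounded from $H^{s,p} \times H^s$ into $[\gamma(L^2; L^p), \gamma(L^2; H^{k,p})]_\theta$. To conclude, it suffices to establish the continuous inclusion
\begin{equation*}
    [\gamma(L^2; L^p), \gamma(L^2; H^{k,p})]_\theta \hookrightarrow \gamma(L^2; H^{s,p}),
\end{equation*}
which is a consequence of the Kalton--Weis interpolation theorem for $\gamma$-radonifying operators. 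This is applicable here because $L^p$, and hence $H^{k,p}$ (being isomorphic to $L^p$ via the Bessel potential $(\eye - \Delta)^{k/2}$), is UMD for $p \in (1,\infty)$.

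The main obstacle lies precisely in this final step: the interpolation identity for $\gamma$-spaces is nontrivial and hinges on deeper structural properties of the target spaces. By contrast, the integer case is an essentially routine combination of the Leibniz rule with \cref{lem:gammaG}, and the reduction to interpolation from the integer endpoints is standard once these endpoint estimates are in hand.
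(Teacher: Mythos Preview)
Your proof is correct and follows essentially the same route as the paper: integer case via the Leibniz identity $\partial_x G(u,\phi,f) = G(\partial_x u,\phi,f) - G(u,\partial_x\phi,f)$ together with $H^{k,p} = W^{k,p}$ and \cref{lem:gammaG}, then bilinear complex interpolation for fractional $s$, with the interpolation of the $\gamma$-spaces supplied by the Kalton--Weis type result (the paper cites \cite[Theorem 9.1.25]{hytonen_analysis_2016} for this). The only cosmetic difference is that you expand the full Leibniz sum at once whereas the paper phrases it as an induction from $s=k$ to $s=k+1$.
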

\begin{proof}
    The case $s = 0$ is just \cref{eq:Ggammalebesgue}.
    By differentiating \cref{eq:tripleG}, we find the identity $\partial_x G(u,\phi,f) = G(\partial_x u,\phi,f) - G(u,\partial_x \phi,f)$.
    It is well-know that for $p \in (1,\infty)$, the Bessel space $H^{k,p}$ coincides with the classical Sobolev space $W^{k,p}$ for $k \in \bbN$, see \cite[Chapter 3]{stein_singular_1970}.
    Therefore, \cref{eq:GgammaBessel} with $s=k+1$ follows from the case $s=k$ for every $k \in \bbN$.
    The case $s \in [0,\infty) \setminus \bbN$ follows by complex bilinear interpolation~\cite[Theorem 4.4.1]{bergh_interpolation_1976}.
    Here, we use \cite[Theorem 9.1.25]{hytonen_analysis_2016} to interpolate between the spaces $\gamma(L^2;H^{k,p})$ for different values of $k$.
\end{proof}

\printbibliography

\end{document}